\theoremstyle{plain}
\newtheorem{theorem}{Theorem}[section]
\newtheorem{lemma}[theorem]{Lemma}
\newtheorem{corollary}[theorem]{Corollary}
\newtheorem{proposition}[theorem]{Proposition}
\theoremstyle{definition}
\newtheorem{definition}[theorem]{Definition}
\theoremstyle{remark}
\newtheorem{remark}[theorem]{Remark}
\newcommand{\N}{\mathbb{N}}
\newcommand{\R}{\mathbb{R}}
\DeclareMathOperator{\im}{im}
\DeclareMathOperator{\ad}{ad}
\DeclareMathOperator{\spenn}{span}
\newcommand{\abs}[1]{\left\vert{#1}\right\vert}
\newcommand{\inner}[1]{\left\langle{#1}\right\rangle}
\title{Hamiltonian Hopf Bifurcations in Gaudin Models}
\author{Tobias V\r{a}ge Henriksen}
\address{Tobias V\r{a}ge Henriksen: 
  Bernoulli Institute for Mathematics,
  Computer Science and Artificial Intelligence\\
  University of Groningen\\ 
  P.O. Box 407, 9700 AK Groningen, The Netherlands,
  \textnormal{and}
  Department of Mathematics\\ 
  University of Antwerp\\ 
  Middelheimlaan 1, B-2020 Antwerp, Belgium
}
\email{t.v.henriksen@rug.nl}
\begin{document}

\date{\today}

\begin{abstract}
We show that $\textup{su}(2)$ rational and trigonometric Gaudin models, or in other words, generalised coupled angular momenta systems, have singularities that undergo Hamiltonian Hopf bifurcations. In particular, we find a normal form for the Hamiltonian Hopf bifurcation up to sixth order, letting us determine when the bifurcation is degenerate or not. Furthermore, in the non-degenerate case we may use the fourth order terms to determine whether the bifurcation is supercritical or subcritical; whether a flap appears in the image of the momentum map or not. Finally, figures illustrating some of the bifurcations taking place in $\textup{su}(2)$ Gaudin models are presented, showing that there are more bifurcations occurring than only Hamiltonian Hopf ones.
\end{abstract}
\maketitle
\section{Introduction}

Integrable systems are triples $(\mathcal{M},\omega,F=(f_{1},\dots,f_{n}))$, where $(\mathcal{M},\omega)$ is a $2n$-dimensional symplectic manifold, and $F : \mathcal{M} \to \R^{n}$ is a smooth mapping called the \emph{momentum map}. The components of $F$ Poisson commute with each other, i.e.\ $\{f_{i},f_{j}\} = 0$ for all $i,j \in \{1,\dots,n\}$, and are functionally independent almost everywhere, i.e.\ their gradients are linearly independent almost everywhere. Integrable systems play an important role both in mathematics and in physics, examples being the Kepler problem decribing planetary motion in celestial mechanics, and the Jaynes-Cummings model describing atoms interacting with an electromagnetic field in quantum mechanics. A third example, which is studied in this paper, are the Gaudin models, introduced by Gaudin in 1976 \cite{Gaudin1976}. The Gaudin models are examples of classical and quantum spin chains (see for instance Arutyunov \cite[Section 5.1.3]{Arutyunov2019}); we will consider the former. In particular, we study certain bifurcations taking place in Gaudin models.

In this paper, let $\mathcal{M} = \mathbb{S}^{2} \times \mathbb{S}^{2}$, and we endow each sphere with Cartesian coordinates $(x_{i},y_{i},z_{i})$ such that $x_{i}^{2} + y_{i}^{2} + z_{i}^{2} = 1$, for $i \in \{1,2\}$. Furthermore, we take the symplectic form to be $\omega = R_{1} \omega_{\mathbb{S}^{2}} \otimes R_{2} \omega_{\mathbb{S}^{2}}$, $R_{1},R_{2} \in \R_{>0}$ being the weight of the respective spheres, and $\omega_{\mathbb{S}^{2}}$ being the symplectic form on the $2$-sphere $\mathbb{S}^{2}$. 

Let $\mathbf{t} = (t_{0},t_{1},t_{2},t_{3},t_{4}) \in \R^{5}$ be a $5$-tuple of parameters, and $w \in \R$ another parameter, which we for simplicity usually take to be either $0$ or $1$ in specific examples in Section \ref{sec:momentum_map}. The components of the momentum map $F = (J,H_{w,\mathbf{t}})$ are
\begin{align} \label{eq:Gaudin-system-intro}
\begin{cases}
J(x_{1},y_{1},z_{1},x_{2},y_{2},z_{2}) = R_{1}z_{1} + R_{2}z_{2}, \\
H_{w,\mathbf{t}}(x_{1},y_{1},z_{1},x_{2},y_{2},z_{2}) = t_{0}(z_{1} + z_{2})^{2} + w(t_{1}z_{1} + t_{2}z_{2}) + t_{3}(x_{1}x_{2} + y_{1}y_{2}) + t_{4}z_{1}z_{2}.
\end{cases}
\end{align}
In particular, if $t_{0} = 0$, then $(\mathcal{M},\omega,(J,H_{w,\mathbf{t}}))$ is said to define a $\textup{su}(2)$ \emph{rational Gaudin model} (see Section \ref{sec:Gaudin-models}). (The prefix $\textup{su}(2)$ is the Lie algebra of the special unitary group of $2 \times 2$ matrices, which will be dropped after the introduction.) In Section \ref{sec:momentum_map}, we fix $w = 1$ in this case. If, on the other hand, $w = 0$, and $t_{0}$ is arbitrary, then $(\mathcal{M},\omega,(J,H_{w,\mathbf{t}}))$ is said to define a $\textup{su}(2)$ \emph{trigonometric Gaudin model}.

The most interesting points for the momentum map are the points for which the components of the momentum map are not functionally independent. These are the singularities of the system. Let $DF$ be the Jacobian of $F$. The rank of the singularity $z_{0} \in \mathcal{M}$ is the rank of $DF|_{z_{0}}$. There are four rank $0$ singularities in the system defined by \eqref{eq:Gaudin-system-intro}: 
\begin{align*}
\begin{split}
\begin{cases}
m_{0} := (0,0,1,0,0,-1), \\
m_{1} := (0,0,-1,0,0,-1), \\
m_{2} := (0,0,-1,0,0,1), \\
m_{3} := (0,0,1,0,0,1).
\end{cases}
\end{split}
\end{align*}
The rank $0$ singularities are said to go through a \emph{Hamiltonian Hopf bifurcation} if the eigenvalues of the linearised Hamiltonian vector field go from being purely imaginary, for which the singularity is called \emph{elliptic-elliptic}, to lying in the complex plane, with non-zero real part, for which the singularity is called \emph{focus-focus}. In fact, one also requires that the eigenvalues split off the imaginary axis transversally, see Section \ref{sec:HHB-theory}.

The system defined by \eqref{eq:Gaudin-system-intro} also possess rank $1$ singularities. In the image of the momentum map, these are represented as curves, which one can see in Figure \ref{fig:cam}. In this figure, we have fixed $w = 1$ and $t_{0} = t_{2} = 0$, and vary the remaining parameters $t_{1}$, $t_{3}$, and $t_{4}$. In Figures \ref{fig:cam_1}, \ref{fig:cam_2}, and \ref{fig:cam_3}, all curves are of \emph{elliptic-regular} type, which are singularities whose linearised Hamiltonian vector field has one pair of conjugate purely imaginary eigenvalues. In Figure \ref{fig:cam_4}, however, a new set of curves appears in the interior of the original curves (those connecting $m_{1}$, $m_{0}$, and $m_{3}$). The two curves adjacent to $m_{2}$ are of elliptic-regular type, whilst the curve opposite $m_{2}$ is of \emph{hyperbolic-regular} type. Hyperbolic-regular singularities are characterised by a pair of real eigenvalues $\pm a \in \R$. Note also that the endpoints of the hyperbolic-regular curve are cusps (see Section \ref{sec:HHB-theory}). In fact, what we are looking at is a projection of a domain embedded in three dimensions. The curves connected to $m_{2}$ define a section of a different sheet than the sheet $m_{1}$, $m_{0}$, and $m_{3}$ are located on. The two sheets are connected by the hyperbolic-regular line. We call the sheet on which $m_{2}$ sits a \emph{flap}.

The story told by Figure \ref{fig:cam} is a story of four Hamiltonian Hopf bifurcations. In Figure \ref{fig:cam_1}, all rank $0$ singularities $m_{0},m_{1},m_{2},m_{3}$ are of elliptic-elliptic type. Between Figures \ref{fig:cam_1} and \ref{fig:cam_2}, $m_{0}$ undergoes a \emph{supercritical} Hamiltonian Hopf bifurcation (see Section \ref{sec:HHB-theory}), and between Figures \ref{fig:cam_2} and \ref{fig:cam_3}, $m_{2}$ undergoes a similar bifurcation. Finally, between Figures \ref{fig:cam_3} and \ref{fig:cam_4}, $m_{0}$ undergoes another supercritical Hamiltonian Hopf bifurcation, whilst $m_{2}$ undergoes a \emph{subcritical} one. In this paper we will understand at what times these bifurcations take place, and under what conditions.

\begin{figure}[tb]
\def\scale{0.8}
\centering
\begin{tabular}{cc}
    \subfloat[$t_{1} = 1$, $t_{3} = 0$, $t_{4} = 0$. \label{fig:cam_1}]{\includegraphics[scale=\scale]{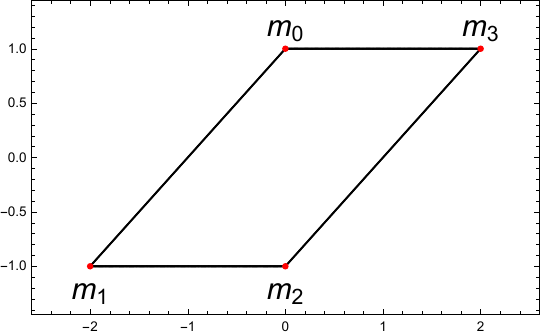}} & 
    \subfloat[$t_{1} = 0.5$, $t_{3} = 0.5$, $t_{4} = 0.5$. \label{fig:cam_2}]{\includegraphics[scale=\scale]{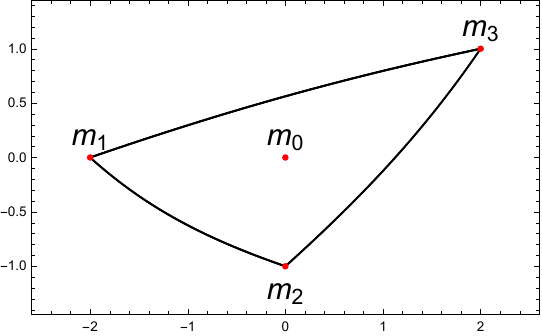}}\\
    \subfloat[$t_{1} = 0.5$, $t_{3} = 0.5$, $t_{4} = 0$. \label{fig:cam_3}]{\includegraphics[scale=\scale]{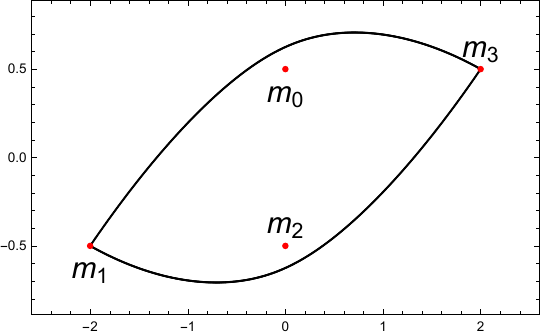}} & 
    \subfloat[$t_{1} = 0.5$, $t_{3} = 0.5$, $t_{4} = -1.5$. \label{fig:cam_4}]{\includegraphics[scale=\scale]{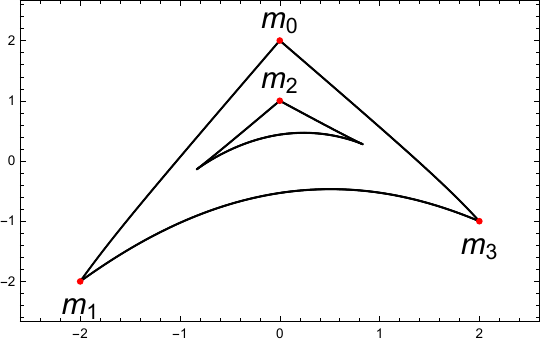}}
\end{tabular}
\caption{The figures shows the image of the momentum map for various choices of the parameters. In all figures, we have $R_{1} = R_{2} = 1$ and $t_{0} = t_{2} = 0$. In the first figure, all rank $0$ singularities are elliptic-elliptic. Then $m_{0}$, and next $m_{2}$, become focus-focus. Finally, all are elliptic-elliptic again, but $m_{2}$ sits on a flap.}
\label{fig:cam}
\end{figure}

There exist various methods to determine whether a singularity undergoes a Hamiltonian Hopf bifurcation. Han{\ss}mann and Van der Meer \cite{Hanssmann2002} used a geometric method to study Hamiltonian Hopf bifurcations in the $3D$ H{\'e}non-Heiles family. Later, Han{\ss}mann and Van der Meer \cite{Hanssmann2003} used singularity theory to study Hamiltonian Hopf bifurcations in integrable systems with $3$ degrees of freedom in more generality. Cushman and Van der Meer \cite{Cushman1990} (see also the book by Cushman and Bates \cite{Cushman1997}) studied Hamiltonian Hopf bifurcations in the Lagrange top by making (the Taylor expansion of) the symplectic structure standard up to the necessary order (see Lemma \ref{lem:j-jet-Hamiltonian}), for then to put the Hamiltonian, at the critical point, in normal form. When we refer to normal forms of Hamiltonians in this text, it is always assumed that the Hamiltonian is evaluated at a critical point of rank $0$.

In this paper we are going to mimic the approach of making the symplectic structure canonical. Then, to put the Hamiltonian into normal form, one studies its Taylor expansion. It is sufficient to study the second order term, $H_{w,\mathbf{t}}^{2}$, in the series to say whether or not this bifurcation takes place. As this order is also sufficient to find the linearised vector field, we say that the Hamiltonian Hopf bifurcation is \emph{linear} if $H_{w,\mathbf{t}}^{2}$ can be put into the following normal form (see also \eqref{eq:HHB_normal_form_linear} and Van der Meer \cite{vanderMeer1985}), where $(q_{1},q_{2},p_{1},p_{2})$ are symplectic coordinates such that the associated symplectic form is standard, $\omega = dq_{1} \wedge dp_{1} + dq_{2} \wedge dp_{2}$,
\begin{equation} \label{eq:linear-normal-form-intro}
\widehat{H}_{w,\mathbf{t}}^{2} = \rho (q_{1}p_{2} - q_{2}p_{1}) + \frac{\sigma}{2}(q_{1}^{2} + q_{2}^{2}),
\end{equation}
for some $\rho \in \R \setminus \{0\}$ and $\sigma = \pm 1$. (The hats in \eqref{eq:linear-normal-form-intro} and in future equations signify that they are in normal form.) The linear Hamiltonian Hopf bifurcation is also called a \emph{Krein collision}, see Marsden \cite{Marsden1992}.

Let us define 
\begin{align*}
&t_{4,m_{0}}^{\pm} := t_{4,m_{0}}^{\pm}(R_{1},R_{2},w,t_{1},t_{2},t_{3}) := \frac{w(t_{1}R_{2} - t_{2}R_{1}) \pm 2t_{3}\sqrt{R_{1}R_{2}}}{R_{1} + R_{2}}, \\
&t_{4,m_{2}}^{\pm} := t_{4,m_{2}}^{\pm}(R_{1},R_{2},w,t_{1},t_{2},t_{3}) := \frac{-w(t_{1}R_{2} - t_{2}R_{1}) \pm 2t_{3}\sqrt{R_{1}R_{2}}}{R_{1} + R_{2}}.
\end{align*}
In Section \ref{sec:linear-HHB} we prove the following two results:

\begin{lemma} \label{lem:rank0type-intro}
Let $t_{3} \neq 0$. The singularities $m_{1}$ and $m_{3}$ are of elliptic-elliptic type for any $(t_{0},t_{1},t_{2},t_{3},t_{4}) \in \R^{5}$. For $k \in \{0,2\}$, the singularity $m_{k}$ is of elliptic-elliptic type if $t_{4} < t_{4,m_{k}}^{-}$ or $t_{4} > t_{4,m_{k}}^{+}$. If either
\begin{itemize}
    \item $R_{1} = R_{2}$, $w \neq 0$, and $t_{1} + t_{2} \neq 0$, or
    \item $R_{1} \neq R_{2}$ and $(R_{1} - R_{2})t_{3} > \abs{w(t_{1} + t_{2})\sqrt{R_{1}R_{2}}}$,
\end{itemize}
then for $t_{4,m_{k}}^{-} < t_{4} < t_{4,m_{k}}^{+}$, $m_{k}$ is a focus-focus point. If neither of these conditions are met, $m_{k}$ is an elliptic-elliptic point for $t_{4,m_{k}}^{-} < t_{4} < t_{4,m_{k}}^{+}$. Finally, for $t_{4} \in \{t_{4,m_{k}}^{-},t_{4,m_{k}}^{+}\}$, $m_{k}$ is degenerate.
\end{lemma}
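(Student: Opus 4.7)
The plan is to linearise the Hamiltonian vector field $X_{H_{w,\mathbf{t}}}$ at each rank-zero singularity $m_{k}$ and read off the singularity type from the spectrum. Using \autoref{lem:j-jet-Hamiltonian} to make $\omega$ canonical in a neighbourhood of $m_{k}$, I start from ambient coordinates $(x_{i},y_{i})$ on each sphere (with $z_{i} = \pm\sqrt{1-x_{i}^{2}-y_{i}^{2}}$ chosen according to which pole of $\mathbb{S}^{2}$ carries $m_{k}$) and rescale to bring $\omega$ to the standard form. Taylor-expanding each term of \eqref{eq:Gaudin-system-intro} to second order gives the quadratic Hamiltonian $H^{2}_{w,\mathbf{t},m_{k}}$. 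A key simplification is that $z_{1}+z_{2}$ vanishes to first order at $m_{0}$ and $m_{2}$, so $t_{0}(z_{1}+z_{2})^{2}$ contributes nothing quadratically there, while at $m_{1}$ and $m_{3}$ it contributes a definite-sign quadratic piece that will ensure those singularities stay elliptic-elliptic.

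The linearised matrix at $m_{k}$ admits, up to a symplectic change of variables, the block form $A_{m_{k}} = M_{m_{k}}\otimes \widetilde{J}$, where $\widetilde{J}$ is a fixed $2\times 2$ skew matrix with spectrum $\{\pm i\}$ and $M_{m_{k}}$ is an explicit $2\times 2$ matrix whose entries are linear combinations of $t_{1},\ldots,t_{4},R_{1},R_{2},w$. Consequently the spectrum of $A_{m_{k}}$ is $\{\pm i\mu : \mu\in\mathrm{spec}(M_{m_{k}})\}$ and the singularity type is determined by the trace and discriminant of $M_{m_{k}}$. At $m_{1}$ and $m_{3}$, the extra $t_{0}$-contribution keeps $\det(M_{m_{k}})$ positive and the discriminant non-negative, forcing a real spectrum for $M_{m_{k}}$, a purely imaginary spectrum for $A_{m_{k}}$, and elliptic-elliptic behaviour. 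At $m_{0}$ and $m_{2}$, the discriminant of $M_{m_{k}}$ is a quadratic polynomial in $t_{4}$ whose two roots one computes directly to be $t_{4,m_{k}}^{\pm}$: outside $[t_{4,m_{k}}^{-},t_{4,m_{k}}^{+}]$ the spectrum of $M_{m_{k}}$ is real and $m_{k}$ is elliptic-elliptic; at the endpoints the discriminant vanishes, giving the degenerate double eigenvalue that produces the Krein collision of the Hamiltonian Hopf bifurcation; inside the interval $M_{m_{k}}$ has a complex conjugate pair of eigenvalues, so $A_{m_{k}}$ is a candidate focus-focus.

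The main obstacle is isolating when this complex pair has nonzero real part, i.e.\ when $\operatorname{tr}(M_{m_{k}}) \neq 0$, since otherwise the eigenvalues of $A_{m_{k}}$ do not form a proper focus-focus quadruple. A direct computation shows $\operatorname{tr}(M_{m_{k}})$ is affine in $t_{4}$ with slope proportional to $R_{1} - R_{2}$ and constant term proportional to $w(t_{1}+t_{2})$. When $R_{1} = R_{2}$, the trace is independent of $t_{4}$ and nonzero precisely when $w \neq 0$ and $t_{1}+t_{2} \neq 0$: the first displayed condition. When $R_{1} \neq R_{2}$, the trace vanishes at a unique value $t_{4}^{\ast} = w(t_{1} R_{2} + t_{2} R_{1})/(R_{2} - R_{1})$, and the focus-focus property persists throughout the interval exactly when $t_{4}^{\ast}$ lies outside $[t_{4,m_{k}}^{-},t_{4,m_{k}}^{+}]$; computing $|t_{4}^{\ast} - \tfrac{1}{2}(t_{4,m_{k}}^{-} + t_{4,m_{k}}^{+})|$ and comparing with the half-width $\tfrac{1}{2}(t_{4,m_{k}}^{+} - t_{4,m_{k}}^{-})$ simplifies exactly to the displayed inequality $(R_{1}-R_{2})t_{3} > |w(t_{1}+t_{2})\sqrt{R_{1} R_{2}}|$. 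When neither condition holds the trace vanishes within the interval; analysing this degeneracy case by case then yields the elliptic-elliptic classification stated in the lemma, and the endpoints $t_{4} \in \{t_{4,m_{k}}^{-}, t_{4,m_{k}}^{+}\}$ are degenerate as noted above.
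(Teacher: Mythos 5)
Your overall strategy --- linearise at each $m_{k}$, pass to a complex $2\times 2$ matrix $M_{m_{k}}$ with $\mathrm{spec}(A_{m_{k}})=\{\pm i\mu:\mu\in\mathrm{spec}(M_{m_{k}})\}$, and read the type off the trace and discriminant --- is sound, and it is essentially the computation that the paper's one-sentence proof delegates to Le~Floch and Pelayo. But two of your intermediate claims are wrong. First, the elliptic-ellipticity of $m_{1}$ and $m_{3}$ cannot be caused by ``the extra $t_{0}$-contribution'': the lemma asserts it for \emph{all} $\mathbf{t}$, in particular for $t_{0}=0$. What actually forces it is that at the aligned poles both blocks of $\omega$ carry the same orientation, so the two off-diagonal entries of $M_{m_{k}}$ are $t_{3}/R_{1}$ and $t_{3}/R_{2}$ with the same sign and the discriminant equals $(\,\cdot\,)^{2}+4t_{3}^{2}/(R_{1}R_{2})>0$ for every $\mathbf{t}$; the $t_{0}$-term only shifts the diagonal. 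Second, your reduction of the second bullet to ``$t_{4}^{\ast}$ lies outside $[t_{4,m_{k}}^{-},t_{4,m_{k}}^{+}]$'' goes the wrong way: with $t_{4}^{\ast}=w(t_{1}R_{2}+t_{2}R_{1})/(R_{2}-R_{1})$, midpoint $w(t_{1}R_{2}-t_{2}R_{1})/(R_{1}+R_{2})$ and half-width $2|t_{3}|\sqrt{R_{1}R_{2}}/(R_{1}+R_{2})$, one finds $t_{4}^{\ast}-\mathrm{mid}=2wR_{1}R_{2}(t_{1}+t_{2})/\big((R_{2}-R_{1})(R_{1}+R_{2})\big)$, so $t_{4}^{\ast}$ is \emph{outside} the interval precisely when $|w(t_{1}+t_{2})|\sqrt{R_{1}R_{2}}>|(R_{1}-R_{2})t_{3}|$ --- the reverse of the displayed inequality. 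Your criterion and your algebra therefore cannot both be producing the stated conclusion; as a cross-check, in the configuration of Figure~\ref{fig:trig-col} ($w=0$, $R_{1}\neq R_{2}$) the trace of $M_{m_{0}}$ vanishes identically in $t_{4}$, yet $m_{0}$ and $m_{2}$ do become focus-focus inside the interval.

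The deeper gap is the last step. When $\operatorname{tr}M_{m_{k}}=0$ and the discriminant is negative, $A_{m_{k}}$ has real eigenvalues of multiplicity two, which is neither an elliptic-elliptic nor a focus-focus spectrum for $H_{w,\mathbf{t}}$ alone; the Williamson type must then be read off a \emph{generic} combination $c_{1}J+c_{2}H_{w,\mathbf{t}}$, as recalled in Section~\ref{sec:HHB-theory}. Since the quadratic part of $J$ at $m_{0}$, $m_{2}$ reduces to $\pm\mathds{1}$ in the $M$-picture, whenever $\mathrm{spec}(M_{m_{k}})=\{\alpha\pm i\beta\}$ with $\beta\neq 0$ the generic combination has eigenvalues with real parts $\pm c_{2}\beta\neq 0$ and nonzero imaginary parts, i.e.\ a focus-focus quadruple, \emph{irrespective} of whether $\operatorname{tr}M_{m_{k}}=2\alpha$ vanishes. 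So ``analysing this degeneracy case by case then yields the elliptic-elliptic classification'' is not a proof, and carrying it out via generic combinations does not yield elliptic-elliptic. Where the trace genuinely enters is at the endpoints: $\operatorname{tr}M_{m_{k}}\big|_{t_{4}=t_{4,m_{k}}^{\pm}}=2\alpha^{\pm}$ is (twice) the coefficient $\rho$ of the semisimple part in \eqref{eq:HHB_normal_form_linear}, and the bulleted conditions are what guarantee $\rho\neq 0$ in Theorem~\ref{thm:linearHHB}. You need to either redo the interior classification with generic combinations, or follow how Le~Floch and Pelayo's Proposition~2.5 --- the argument the paper actually invokes --- handles exactly this point.
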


\begin{theorem}
Let $k \in \{0,2\}$. If the conditions necessary for focus-focus points in Lemma \ref{lem:rank0type-intro} are met, then, for both $t_{4} = t_{4,m_{k}}^{+}$ and $t_{4} = t_{4,m_{k}}^{-}$, the point $m_{k}$ undergoes a linear Hamiltonian Hopf bifurcation.
\end{theorem}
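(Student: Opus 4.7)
The plan is to apply the strategy sketched in the paragraph referring to Cushman--Van der Meer: first use \autoref{lem:j-jet-Hamiltonian} to pick Darboux coordinates $(q_{1},q_{2},p_{1},p_{2})$ centred at $m_{k}$ in which the $2$-jet of $\omega$ is the standard symplectic form, then compute the quadratic part $H^{2}_{w,\mathbf{t}}$ of the Hamiltonian at $m_{k}$ in these coordinates, and finally exhibit an explicit linear symplectic change of coordinates that brings $H^{2}_{w,\mathbf{t}}$ into the normal form \eqref{eq:linear-normal-form-intro}. Since $m_{k}$ is a north or south pole of each sphere, a convenient choice is the local chart given by $(x_{i},y_{i})$ on each factor of $\mathbb{S}^{2} \times \mathbb{S}^{2}$, appropriately rescaled so that $\omega$ becomes standard to first order; the $z_{i}$ are then functions of $x_{i}^{2}+y_{i}^{2}$, which lets me read off $H^{2}_{w,\mathbf{t}}$ as an explicit quadratic form in $(x_{1},y_{1},x_{2},y_{2})$ depending polynomially on $t_{1},t_{2},t_{3},t_{4}$.

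With that quadratic form in hand, the Hamiltonian matrix $\Omega^{-1}D^{2}H^{2}_{w,\mathbf{t}}|_{m_{k}}$ is a $4\times 4$ matrix whose characteristic polynomial, by the algebra already used to obtain \autoref{lem:rank0type-intro}, factors as $(\lambda^{2}+\alpha(t_{4}))^{2} - \beta(t_{4})$ for some explicit smooth functions $\alpha,\beta$, with $\beta(t_{4,m_{k}}^{\pm})=0$ and $\alpha(t_{4,m_{k}}^{\pm})>0$ precisely when the hypotheses of \autoref{lem:rank0type-intro} hold. This gives a double pair of purely imaginary eigenvalues $\pm i\rho$ with $\rho=\sqrt{\alpha(t_{4,m_{k}}^{\pm})}\neq 0$. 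To confirm that this is a genuine Krein collision and not mere coincidence of simple eigenvalues, I would check directly that the eigenspace of $i\rho$ is one-dimensional by exhibiting a generalised eigenvector, equivalently that $H^{2}_{w,\mathbf{t}}$ is indefinite on the corresponding real invariant subspace; this forces the Williamson type to be the $1{:}1$ resonant one, whose symplectic normal form is exactly \eqref{eq:linear-normal-form-intro} with $\sigma=\pm 1$ determined by the sign of the indefinite block. The sign of $\sigma$, and the explicit value of $\rho$, then follow by direct substitution.

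Transversality, which is the other ingredient of a linear Hamiltonian Hopf bifurcation, is handled by treating $t_{4}$ as the bifurcation parameter: from the factored characteristic polynomial, the eigenvalues near $t_{4,m_{k}}^{\pm}$ are $\lambda = \pm\sqrt{-\alpha \pm \sqrt{\beta}}$, and the real part of the splitting eigenvalues to leading order is proportional to $\sqrt{|\beta(t_{4})|}$. Since $\beta$ is a smooth function of $t_{4}$ with a simple zero at $t_{4,m_{k}}^{\pm}$ (which can be checked by differentiating the explicit expression for $\beta$), the eigenvalues leave the imaginary axis with non-zero speed, establishing transversality.

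The main obstacle I expect is the bookkeeping for the symplectic normalisation: although the eigenvalue analysis is essentially already performed in \autoref{lem:rank0type-intro}, one still has to produce the explicit linear symplectic transformation that brings $H^{2}_{w,\mathbf{t}}$ to the form $\rho(q_{1}p_{2}-q_{2}p_{1})+\tfrac{\sigma}{2}(q_{1}^{2}+q_{2}^{2})$, and to verify the non-semisimplicity of the matrix at the critical parameter value. In the cases $R_{1}=R_{2}$ and $R_{1}\neq R_{2}$ this may require slightly different intermediate coordinate changes (a rotation mixing the two spheres when $R_{1}=R_{2}$, and a rescaling plus rotation when $R_{1}\neq R_{2}$), so the two subcases of \autoref{lem:rank0type-intro} will likely be treated separately before being combined into the final statement.
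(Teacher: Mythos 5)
Your proposal is correct and follows essentially the same route as the paper: pass to a chart at $m_{k}$ with the symplectic form standard at the point, read off the quadratic part of $H_{w,\mathbf{t}}$, show the linearisation at $t_{4}=t_{4,m_{k}}^{\pm}$ has a non-semisimple double pair $\pm i\rho$ and bring it to the $1{:}{-1}$ resonant normal form (the paper does exactly this via the Burgoyne--Cushman algorithm, computing the Jordan--Chevalley decomposition and an explicit symplectic basis), and then obtain transversality from the characteristic polynomial of the family (the paper solves for the unfolding functions $\nu_{1},\nu_{2}$ of \eqref{eq:HHB-unfolding-linear-matrix} and checks $d\nu_{2}/dt_{4}\neq 0$, which is equivalent to your ``simple zero of $\beta$'' criterion since for the unfolded normal form one has $\beta=-4\sigma\nu_{2}(\rho+\nu_{1})^{2}$). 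Two small cautions: your parenthetical ``equivalently, $H^{2}_{w,\mathbf{t}}$ is indefinite on the invariant subspace'' is not a valid equivalence --- a double imaginary pair of mixed Krein signature can still be semisimple, so you must keep the direct generalised-eigenvector (or explicit nilpotent part) check, which is what the paper's computation of $\mathcal{N}\neq 0$ accomplishes; and to meet Definition \ref{def:HHB-linear} literally you still need the smooth unfolding \eqref{eq:HHB_linear_unfolding} of the family (the paper imports this from Cushman--Bates), after which the splitting real part grows like $\sqrt{\abs{t_{4}-t_{4,m_{k}}^{\pm}}}$ rather than ``with non-zero speed''.
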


In Section \ref{sec:nonlinear-HHB} we go further, and find the normal form for the Hamiltonian Hopf bifurcation of rank $0$ singularities up to $6$-th order. This allows us to tell whether the bifurcation is degenerate or not. Such a computation was done for the first time by Glebsky and Lerman \cite{Glebsky1995}, where they studied the generalised $1$-D Swift-Hohenberg equation. For this computation, let us introduce the Hilbert generators (see Van der Meer \cite[p.\ 57]{vanderMeer1985}) $S = q_{1}p_{2} - q_{2}p_{1}$, $M = \frac{1}{2}(q_{1}^{2} + q_{2}^{2})$, $N = \frac{1}{2}(p_{1}^{2} + p_{2}^{2})$, and $T = q_{1}p_{1} + q_{2}p_{2}$ (the last one does not appear again in this section, but plays an important role later on). 

\begin{theorem}\label{thm:nonlinear-normal-form-intro}
Let the conditions necessary for focus-focus points in Lemma \ref{lem:rank0type-intro} be met. The normal form of $H_{w,\mathbf{t}}$ at $m_{0}$ up to third order in the Hilbert generators $S$, $M$, and $N$ is
\begin{equation}\label{eq:nonlinear-normal-form-intro}
\widehat{H}_{w,\mathbf{t}} = a_{1}S + N + a_{2}M + a_{3}M^{2} + a_{4}MS + a_{5}S^{2} + a_{6}M^{3} + a_{7}M^{2}S + a_{8}MS^{2} + a_{9}S^{3},
\end{equation}
where the $a_{i}$'s are polynomials in $t_{0},t_{1},t_{2},t_{3},t_{4}$ and $w$. In particular, 
\begin{align*}
a_{2}|_{t_{4} = t_{4,m_{0}}^{+}} = 0 \quad \text{and} \quad \frac{\partial a_{2}}{\partial t_{4}}|_{t_{4} = t_{4,m_{0}}^{+}} \neq 0,
\end{align*}
so $\widehat{H}_{w,\mathbf{t}}$ is a normal form describing a Hamiltonian Hopf bifurcation. Similarly, if $M$ and $N$ change roles, i.e.\ $M = \frac{1}{2}(p_{1}^{2} + p_{2}^{2})$ and $N = \frac{1}{2}(q_{1}^{2} + q_{2}^{2})$, then $a_{2}|_{t_{4} = t_{4,m_{0}}^{-}} = 0$ and $\frac{\partial a_{2}}{\partial t_{4}}|_{t_{4} = t_{4,m_{0}}^{-}} \neq 0$. The same is also true if we change $m_{0}$ with $m_{2}$.
\end{theorem}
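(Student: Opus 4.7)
The plan is to carry out a Birkhoff--Gustavson-type normal form computation in local Darboux coordinates at $m_{0}$, following the strategy of Cushman--Van der Meer \cite{Cushman1990} combined with the $\mathfrak{sl}_{2}$-formalism of Van der Meer \cite{vanderMeer1985}. First, by Lemma \ref{lem:j-jet-Hamiltonian}, I would introduce local symplectic coordinates $(q_{1},q_{2},p_{1},p_{2})$ vanishing at $m_{0}$ in which the symplectic form coincides with $dq_{1} \wedge dp_{1} + dq_{2} \wedge dp_{2}$ up to order six, and then expand $H_{w,\mathbf{t}}$ as a Taylor polynomial of degree six. The preceding linear Hamiltonian Hopf theorem provides, at $t_{4} = t_{4,m_{0}}^{+}$, a linear symplectic transformation putting the quadratic part into $\rho S + N$; choosing this transformation to depend smoothly on $t_{4}$ in a neighborhood of $t_{4,m_{0}}^{+}$ and tracking the change of variables through the explicit formula for $t_{4,m_{0}}^{+}$, the quadratic part becomes $a_{1}(t_{4}) S + N + a_{2}(t_{4}) M$ with $a_{2}(t_{4,m_{0}}^{+}) = 0$ and $\partial_{t_{4}} a_{2}|_{t_{4} = t_{4,m_{0}}^{+}} \neq 0$.

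Next, I would apply successive Lie-series transformations generated by homogeneous polynomials $G^{(k)}$ of degree $k=3,4,5,6$ to eliminate the non-normal-form terms at each order. The strategy is first to normalize with respect to $\ad_{\rho S}$, which forces the normalized Hamiltonian to be $S^{1}$-invariant and hence a polynomial in the Hilbert generators $S,M,N,T$ subject to the relation $4MN = S^{2}+T^{2}$, and then to exploit the $\mathfrak{sl}_{2}$-triple structure of $(-N,M,T)$ under Poisson brackets to identify a complement of $\textup{im}(\ad_{N})$ inside this invariant ring. Using the decomposition $\R[S,M,N,T]/(4MN-S^{2}-T^{2}) = \R[S,M,N] \oplus T \cdot \R[S,M,N]$, the $T$-summand is killed by generators from $\R[S,M,N]$, and the unwanted $N$-dependent pieces within $\R[S,M,N]$ are killed by generators from $T \cdot \R[S,M,N]$; what survives is a polynomial in $S$ and $M$ together with the distinguished linear term $N$ inherited from the quadratic part. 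Truncating at degree three in the Hilbert generators, equivalently degree six in $(q,p)$, yields the form \eqref{eq:nonlinear-normal-form-intro}.

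Once the normal form is in place, the coefficients $a_{1},\ldots,a_{9}$ are explicit polynomial functions of $t_{0},t_{1},t_{2},t_{3},t_{4},w$ and $R_{1},R_{2}$, and the claims on $a_{2}$ can be read off directly from the explicit expression obtained for $a_{2}$. The case $t_{4}=t_{4,m_{0}}^{-}$ is handled by swapping the roles of $M$ and $N$, which corresponds to reversing the sign of the nilpotent direction $\sigma = \pm 1$ in the linear normal form \eqref{eq:linear-normal-form-intro}. The analogous statements at $m_{2}$ follow from the $\Z_{2}$-symmetry $(x_{i},y_{i},z_{i}) \mapsto (x_{i},-y_{i},-z_{i})$ of $\mathcal{M}$, which interchanges $m_{0}$ with $m_{2}$, preserves $\omega$ (it is a rotation by $\pi$ about the $x$-axis on each sphere), and sends $H_{w,\mathbf{t}}$ to $H_{-w,\mathbf{t}}$ while merely flipping the sign of $J$, consistently with the identity $t_{4,m_{2}}^{\pm}(w) = t_{4,m_{0}}^{\pm}(-w)$.

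The hard part will be the bookkeeping of the order-by-order Lie-series computation: at each order one must solve a cohomological equation $\{G^{(k)}, H^{(2)}\} = H^{(k)} - \widehat{H}^{(k)}$ for the generator $G^{(k)}$, and the resulting polynomials at degree six in $(q,p)$ contain a large number of monomials. The Hilbert-basis decomposition above makes this tractable, but some care is required to keep the coefficient of $N$ fixed at $1$ throughout the procedure, to verify that no hidden $N^{k}$ ($k \geq 2$) terms survive in the final normal form, and to propagate the smooth dependence on $t_{4}$ so that the transversality statement on $\partial_{t_{4}} a_{2}$ is genuinely non-trivial at the bifurcation value.
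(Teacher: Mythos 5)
Your proposal is correct and, for the core of the theorem, follows essentially the same route as the paper: flatten the symplectic form at $m_{0}$ to sixth order (the paper does this explicitly via the blow-up and Proposition \ref{prop:flattening_of_sympl_form}, with Lemma \ref{lem:j-jet-Hamiltonian} justifying that this suffices), push the Taylor expansion of $H_{w,\mathbf{t}}$ through the same transformations, and then remove non-normal-form terms by Lie-series transformations organised by the Hilbert generators, finally reading the claims about $a_{2}$ off the explicit quadratic coefficient. Two points differ in execution. First, your cohomological splitting $\R[S,M,N,T]/(4MN-S^{2}-T^{2})=\R[S,M,N]\oplus T\,\R[S,M,N]$, with generators from $\R[S,M,N]$ killing the $T$-summand and generators from $T\,\R[S,M,N]$ killing the $N$-dependent part, is the correct abstract version of what happens; in the actual computation the transformed Hamiltonian turns out to be already $S^{1}$-invariant and even in $T$, so the paper needs only the generators $E,F\in T\,\R[S,M,N]$ of Appendix \ref{sec:appendix-EF-coeffs} (your extra normalisation steps are harmless, just unnecessary here), and it keeps the quadratic part as $\tilde{a}_{1}S+bN+\tilde{a}_{2}M$ in the coordinates \eqref{eq:Q}, rescaling by $b^{-1}$ only at the end rather than normalising to $\rho S+N$ from the start. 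Second, your derivation of the $m_{2}$ case from the involution $(x_{i},y_{i},z_{i})\mapsto(x_{i},-y_{i},-z_{i})$ is a genuinely different, and cleaner, argument than redoing the computation at $m_{2}$ as the paper implicitly does: the map is a rotation by $\pi$ on each sphere, hence symplectic, it sends $m_{0}$ to $m_{2}$, $J$ to $-J$ (still a multiple of $S$ after linearisation) and $H_{w,\mathbf{t}}$ to $H_{-w,\mathbf{t}}$, consistent with $t_{4,m_{2}}^{\pm}(w)=t_{4,m_{0}}^{\pm}(-w)$, and the focus-focus conditions of Lemma \ref{lem:rank0type} are invariant under $w\mapsto -w$, so the $m_{2}$ statement does follow from the $m_{0}$ one.
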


This theorem is proven in Section \ref{sec:nonlinear-HHB}. Note that, even though the theorem tells us that we may put the Hamiltonian in the normal form for a Hamiltonian Hopf bifurcation at the parameter values $t_{4,m_{0}}^{\pm}$ and $t_{4,m_{2}}^{\pm}$, the coefficients are not the same in all four cases. The coefficients for $t_{4,m_{0}}^{+}$ are given in Appendix \ref{sec:appendix-thm-coeffs}, up to a scaling, defined in the proof of the theorem. 

The bifurcation is non-degenerate if $a_{3} \neq 0$. Furthermore, if the bifurcation is non-degenerate, we may also use the higher order terms to predict when the bifurcation is supercritical or subcritical. This is determined by the sign of the coefficient $a_{3}$ in the normal form.

Some examples of Hamiltonian Hopf bifurcations in $\textup{su}(2)$ Gaudin models are already well-known. The coupled angular momenta system is a $\textup{su}(2)$ rational Gaudin model with $w = 1$, $t_{0} = 0$, $t_{2} = 0$ and $t_{3} = t_{4} = (1-t_{1})$. It has been shown that, for this system, there exists one singularity which goes through a Hamiltonian Hopf bifurcation (see e.g.\ Sadovskií and Zhilinskií \cite{Sadovskii1999} and Le Floch and Pelayo \cite{LeFlochPelayoCAM}). In Figure \ref{fig:cam}, the bifurcation that happens in the coupled angular momenta system is illustrated in Figures \ref{fig:cam_1} and \ref{fig:cam_2}. Hohloch and Palmer \cite{HohlochPalmer2018} generalised the coupled angular momenta system. Their new system is given by \eqref{eq:Gaudin-system-intro}, where one fixes $w = 1$ and $t_{0} = 0$, for which they showed that two singularities may go through Hamiltonian Hopf bifurcations. Hence, they could find a system possessing two focus-focus points, as in Figure \ref{fig:cam_3}. In fact, one can also show that their system may undergo bifurcations producing flaps, as in Figure \ref{fig:cam_4}. In this article we find conditions for when the various types of Hamiltonian Hopf bifurcations occur for an even more generalised family of coupled angular momenta systems, namely the one defined by \eqref{eq:Gaudin-system-intro}.

A number of other systems undergoing Hamiltonian Hopf bifurcations can be found in van der Meer \cite[Section 5.4]{vanderMeer2017}. One of them, the Lagrange top, has been covered extensively by Cushman and Bates \cite[Chapter V]{Cushman1997}, and inspired several of the proofs given in the following sections of the present paper.

\subsection*{Overview}

The goal of this article is to compute the normal form up to $6$-th order for a generalised version of the $\textup{su}(2)$ rational and trigonometric Gaudin models, which we introduce in Section \ref{sec:Gaudin-models}, and to analyse some of the dynamics and geometry conveyed by the corresponding momentum map. In Section \ref{sec:HHB-theory}, we recall defining properties for the Hamiltonian Hopf bifurcation, as well as the normal form for Hamiltonians undergoing such a bifurcation. In Section \ref{sec:linear-HHB}, we show that the Hamiltonian system defined in \eqref{eq:Gaudin-system-intro} has two points both going through two Hamiltonian Hopf bifurcations, and, in Section \ref{sec:nonlinear-HHB}, we compute the normal form for this Hamiltonian up to $6$-th order. Finally, in Section \ref{sec:momentum_map}, we investigate the image of the momentum map at certain instances; in particular we look at how the coefficients in the normal form influences its shape. As the coefficients appearing in the normal form \eqref{eq:nonlinear-normal-form-intro} are very large, they are presented in Appendix \ref{sec:appendix-thm-coeffs}. In Appendix \ref{sec:appendix-EF-coeffs}, certain coefficients computed in the proof of Theorem \ref{thm:nonlinear-normal-form-intro} are presented.

\subsection*{Acknowledgments}

The author is very grateful to Heinz Han{\ss}mann, Sonja Hohloch, and Nikolay Martynchuk for many useful comments and suggestions which helped to improve this paper. The author was fully supported by the \textit{Double Doctorate Funding} of the Faculty of Science and Engineering of the University of Groningen, and paritally supported by the \textit{FNRS-FWO Excellence of Science (EoS) project `Symplectic Techniques in Differential Geometry' G0H4518N}.
\section{A brief study of Gaudin models} \label{sec:Gaudin-models}

The Gaudin model was introduced by Gaudin \cite{Gaudin1976} as a spin model related to the Lie algebra $\textup{sl}(2)$, and later generalised to be related to any semi-simple complex Lie algebra, see Petrera \cite{Petrera2007} and references therein. Gaudin models are integrable systems, to which one can associate a Lax matrix. In particular, the Lax matrices that we consider depend on a parameter $\lambda$, called the \emph{spectral parameter}. Petrera covered three different dependencies on $\lambda$ (as well as their so-called \emph{Leibniz extension}): rational, trigonometric, and elliptic. In this paper we are going to study Hamiltonian Hopf bifurcations (see Section \ref{sec:HHB-theory}) on a generalised version of the rational and trigonometric cases related to the Lie algebra $\textup{su}(2)$. We will always assume this Lie algebra, and henceforth not write it explicitly. This section shows why the system defined in \eqref{eq:Gaudin-system-intro} is indeed a rational or trigonometric Gaudin model depending on certain choices of the parameters $w$ and $\mathbf{t}$.

Let us briefly recall the notion of Lax matrices. We refer to Babelon, Bernard and Talon \cite{Babelon2003} for a more extensive discussion. A \emph{Lax pair} is a pair of time-dependent matrices $L = L(t)$, $K = K(t)$, where $L$ is called a \emph{Lax matrix}, and $K$ an \emph{auxiliary matrix}. Let $\dot{L}$ denote the time derivative of $L$. Then the Lax pair allows us to write the Hamiltonian equations as
\begin{equation*}
\dot{L} = KL - LK.
\end{equation*}
One can show that the spectrum of $L$ is invariant with respect to time; it is \emph{isospectral}. Thus, for $n \in \N$, $\textup{Tr}(L^{n})$, where $\textup{Tr}$ denotes the trace, are conserved quantities. Furthermore, the auxiliary matrix can be written as a function of the Lax matrix, i.e.\ we may write $K = R(f(L))$, where $R : \mathfrak{g} \to \mathfrak{g}$, $\mathfrak{g}$ being some Lie algebra, is a linear operator, and $f : \mathfrak{g} \to \mathfrak{g}$ is an $\textup{Ad}$-covariant function (see Petrera \cite[Section 1.1]{Petrera2007}).

From now on, we let the Lax pair depend on the spectral parameter $\lambda$. Let $\{\lambda_{q}\}_{q \in Q}$, where $Q$ is a set of indices, be the set of poles of $L(\lambda)$ and $K(\lambda)$, i.e.\ $\lambda_{q}$ is such that $L(\lambda_{q}) = \pm \infty$ and/or $K(\lambda_{q}) = \pm \infty$, for all $q \in Q$. Assuming there is no pole at infinity, we may write
\begin{align*}
L(\lambda) = L_{0} + \sum_{q \in Q}\sum_{r=-n_{q}}^{-1} L_{q,r} (\lambda - \lambda_{q})^{r}, \quad
K(\lambda) = K_{0} + \sum_{q \in Q}\sum_{r=-m_{q}}^{-1} K_{q,r} (\lambda - \lambda_{q})^{r},
\end{align*}
where $L_{0}$, $L_{q,r}$, $K_{0}$, and $K_{q,r}$ are matrices, and $n_{q}$ and $m_{q}$ are the order of the pole $\lambda_{q}$ for $L(\lambda)$ and $K(\lambda)$, respectively (see Babelon, Bernard and Talon \cite[Equations 3.6 and 3.7]{Babelon2003}). Computing the residues of $\textup{Tr}(L^{n})$ at $\lambda = \lambda_{q}$, i.e.\ 
\begin{equation*}
\frac{1}{(r-1)!} \lim_{\lambda \to \lambda_{q}} \frac{d^{r-1}}{d\lambda^{r-1}} \big( (\lambda - \lambda_{q})^{r} \textup{Tr}(L^{n}(\lambda)) \big),
\end{equation*}
yields the Hamiltonian function for the Gaudin model.

We consider Gaudin models related to the Lie algebra $\textup{su}(2)$, which is important in many areas of physics. A basis for $\textup{su}(2)$ is given by
\begin{align*}
U_{1} = \frac{1}{2} \begin{pmatrix} 0 & -i \\ -i & 0 \end{pmatrix}, \quad
U_{2} = \frac{1}{2} \begin{pmatrix} 0 & -1 \\ 1 & 0 \end{pmatrix}, \quad
U_{3} = \frac{1}{2} \begin{pmatrix} -i & 0 \\ 0 & i \end{pmatrix}.
\end{align*}
Note that the Lie bracket relations for $U_{1}$, $U_{2}$, and $U_{3}$ are given by $[U_{1},U_{2}] = U_{3}$, $[U_{2},U_{3}] = U_{1}$, and $[U_{3},U_{1}] = U_{2}$. Thus, the coadjoint orbits of the coadjoint action of $\textup{SU}(2)$ on $\textup{su}(2)$ are isomorphic to $\mathbb{S}^{2}$. The manifold in question is $\mathcal{M} = \mathbb{S}^{2} \times \mathbb{S}^{2}$, hence it is related to the Lie algebra $\textup{su}(2) \oplus \textup{su}(2)$.

Note that $\textup{su}(2)$ is isomorphic to the Lie algebra $\R^{3}$ for which the Lie bracket is the vector product. Let $\inner{\cdot,\cdot}$ denote the Euclidean inner product induced by $\R^{3}$. Furthermore, let boldface $\mathbf{v}_{i} := (x_{i},y_{i},z_{i})$ denote the vector with coordinates on the $i$-th sphere, and let $v_{i}^{j}$ denote the $j$-th component of $\mathbf{v}_{i}$, e.g.\ $v_{1}^{2} = y_{1}$. Furthermore, let boldface $\mathbf{w} = (w^{1},w^{2},w^{3})$ be some constant vector in $\R^{3}$. Petrera \cite[Equation 2.28]{Petrera2007} gives us the Lax matrices for the $\textup{su}(2)$ rational and trigonometric (and elliptic, which is omitted here) dependence on the spectral parameter $\lambda$:
\begin{align*}
&L^{\textsc{R}}_{\mathbf{w}}(\lambda) = \sum_{j = 1}^{3} \left( U_{j} w^{j} + \frac{U_{j} v_{1}^{j}}{\lambda - \lambda_{1}} + \frac{U_{j} v_{2}^{j}}{\lambda - \lambda_{2}} \right), \\
&L^{\textsc{T}}(\lambda) = \sum_{j = 1}^{3} \left( \frac{U_{j}v_{1}^{j} - (1 - \cos(\lambda - \lambda_{1}))U_{3}v_{1}^{3}}{\sin(\lambda - \lambda_{1})} + \frac{U_{j}v_{2}^{j} - (1 - \cos(\lambda - \lambda_{2}))U_{3}v_{2}^{3}}{\sin(\lambda - \lambda_{2})} \right),
\end{align*}
respectively (the superscript $\textsc{R}$ is for rational, and the superscript $\textsc{T}$ is for trigonometric). With this, we may find the corresponding Hamiltonians (see Petrera \cite[Propositions 2.4, 2.5 and 2.6]{Petrera2007} and references therein), which are given by computing the residues of $\textup{Tr}\left((L^{\textsc{R}}_{\mathbf{w}}(\lambda))^{2}\right)$ and $\textup{Tr}\left((L^{\textsc{T}}(\lambda))^{2}\right)$ at $\lambda = \lambda_{1}$ and $\lambda = \lambda_{2}$:
\begin{align*}
&H^{\textsc{R}}_{\mathbf{w},(t_{1},t_{2})} = \inner{\mathbf{w},t_{1}\mathbf{v_{1}} + t_{2}\mathbf{v_{2}}} + \frac{t_{1} - t_{2}}{\lambda_{1} - \lambda_{2}} \inner{\mathbf{v_{1}}, \mathbf{v_{2}}}, \\
&H^{\textsc{T}}_{(t_{0},t_{1},t_{2})} = t_{0} (v_{1}^{3} + v_{2}^{3})^{2} + \frac{t_{1} - t_{2}}{\sin(\lambda_{1} - \lambda_{2})} \left( \inner{\mathbf{v_{1}},\mathbf{v_{2}}} - (1 - \cos(\lambda_{1} - \lambda_{2})) v_{1}^{3}v_{2}^{3} \right).
\end{align*}

Recall that we want to consider integrable systems for which one of the integrals are given by $J = R_{1}z_{1} + R_{2}z_{2} = R_{1}v_{1}^{3} + R_{2}v_{2}^{3}$, as introduced in \eqref{eq:Gaudin-system-intro}. Let $\{\cdot,\cdot\}$ denote the Poisson bracket on $\mathbb{S}^{2}$. The coordinates $\mathbf{v}_{1}$ and $\mathbf{v}_{2}$ satisfy the bracket relations $\{v_{i}^{1},v_{i}^{2}\} = v_{i}^{3}$, $\{v_{i}^{2},v_{i}^{3}\} = v_{i}^{1}$, and $\{v_{i}^{3},v_{i}^{1}\} = v_{i}^{2}$. If $(J,H^{\textsc{R}}_{\mathbf{w},(t_{1},t_{2})})$ is to define a integrable system, then we must have that $\{J,H^{\textsc{R}}_{\mathbf{w},(t_{1},t_{2})}\} = 0$. However, this can only happen if we choose $w^{1} = w^{2} = 0$. Thus, we will only consider the case when $\mathbf{w} = (0,0,w^{3})$, and simply denote $\mathbf{w}$ by $w = w^{3}$.

\begin{definition}
Let boldface $\mathbf{t} = (t_{0},t_{1},t_{2},t_{3},t_{4}) \in \R^{5}$, and write $(x_{i},y_{i},z_{i})$ instead of $\mathbf{v}_{i}$, $i \in \{1,2\}$. We define a third Hamiltonian, which is a generalisation of $H^{\textsc{R}}_{\mathbf{w},(t_{1},t_{2})}$ and $H^{\textsc{T}}_{(t_{0},t_{1},t_{2})}$:
\begin{equation} \label{eq:generalised-Gaudin-Hamiltonian}
H_{w,\mathbf{t}}(x_{1},y_{1},z_{1},x_{2},y_{2},z_{2}) := t_{0}(z_{1} + z_{2})^{2} + w(t_{1}z_{1} + t_{2}z_{2}) + t_{3}(x_{1}x_{2} + y_{1}y_{2}) + t_{4}z_{1}z_{2}.
\end{equation}
\end{definition}

Note that if we set $t_{0} = 0$ and $t_{3} = t_{4} = \frac{t_{1}-t_{2}}{\lambda_{1}-\lambda_{2}}$ we obtain $H_{w,\mathbf{t}} = H^{\textsc{R}}_{w,(t_{1},t_{2})}$. Likewise, if we set $w = 0$, $t_{3} = \frac{t_{1}-t_{2}}{\sin(\lambda_{1}-\lambda_{2})}$ and $t_{4} = (t_{1}-t_{2})\cot(\lambda_{1}-\lambda_{2})$, then we obtain $H_{0,\mathbf{t}} = H^{\textsc{T}}_{(t_{0},t_{1},t_{2})}$. In fact, when we later refer to the rational Gaudin model, we are not going to enforce $t_{3} = t_{4}$, and so it would be more precise to call it a generalised rational Gaudin model.
\section{Normal form theory} \label{sec:HHB-theory}

In this section we recall some facts about the Hamiltonian Hopf bifurcation, in particular its normal form. Furthermore, we recall how one can use the normal form to make predictions about the dynamics of the system.

Let $(\mathcal{M},\omega,F=(f_{1},f_{2}))$ be an integrable system, and let $DF$ be the Jacobian of $F$. We say that a point $z_{0} \in \mathcal{M}$ is a singularity of $F$ if $DF|_{z_{0}}$ does not have maximal rank. In particular, the singularity is said to be of rank $1$ if the rank of $DF$ is $1$, and of rank $0$ or maximal corank if the rank of $DF$ is $0$. 

A rank $0$ singularity is said to be non-degenerate if the Hessians of $f_{1}$ and $f_{2}$ span a Cartan subalgebra in the real symplectic Lie algebra $\textup{sp}(4,\R)$ (cf.\ Bolsinov and Fomenko \cite[Section 1.8]{Bolsinov2004}). Non-degenerate rank $0$ singularities were classified by Williamson \cite{Williamson1936} by the eigenvalues of the linearised Hamiltonian vector field for the linear combination $c_{1}f_{1} + c_{2}f_{2}$ for generic $c_{1}, c_{2} \in \R$. He showed that there exists four different types of non-degenerate singularities of maximal corank for integrable systems with $2$ degrees of freedom. Let $\alpha,\beta \in \R \setminus \{0\}$. Then the singularities are classified as follows:
\begin{enumerate}[(i)]
    \item \emph{elliptic-elliptic}: four purely imaginary eigenvalues $i\alpha, -i\alpha, i\beta, -i\beta$,
    \item \emph{hyperbolic-hyperbolic}: four real eigenvalues $\alpha, -\alpha, \beta, -\beta$,
    \item \emph{elliptic-hyperbolic}: two real and two purely imaginary eigenvalues $\alpha, -\alpha, i\beta, -i\beta$,
    \item \emph{focus-focus}: four complex eigenvalues $\alpha+i\beta, \alpha-i\beta, -\alpha+i\beta, -\alpha-i\beta$.
\end{enumerate}
Note that in systems containing a circle action, i.e.\ systems for which at least one of the vector fields corresponding to $f_{1}$ and $f_{2}$ generate a periodic flow, hyperbolic-hyperbolic singularities cannot appear (see for instance Hohloch and Palmer \cite{Hohloch2021}). Note also that the eigenvalues for focus-focus singularities always come in quadruples, and hence such singularities cannot mix with the other types (unless we go to higher dimensions). 

Let us also recall what it means for a rank $1$ singularity $z_{0}$ to be non-degenerate (again, cf.\ Bolsinov and Fomenko \cite[Section 1.8]{Bolsinov2004}). Here $df_{1}$ and $df_{2}$ are linearly dependent, and so there exists $\lambda$ and $\mu$ such that $\lambda df_{1}(z_{0}) + \mu df_{2}(z_{0}) = 0$. Let $L$ be a tangent line to the orbit of the action of $\R^{2}$, and let $L'$ be its symplectic complement, i.e.\ $L' = \{ y \in T\mathcal{M} : \omega(x,y) = 0 \,\ \forall \, x \in T\mathcal{M} \}$. Then $z_{0}$ is said to be non-degenerate if the $2$-form $\lambda d^{2}f_{1}(z_{0}) + \mu d^{2}f_{2}(z_{0})$ is invertible. Note that the non-degenerate rank $1$ singularities can be either of \emph{elliptic-regular} type or of \emph{hyperbolic-regular} type. Furthermore, a simple type of degenerate singularity appears in the Gaudin models, namely \emph{cusps}, sometimes called \emph{parabolic} (for a precise definition, see e.g.\ Bolsinov, Guglielmi and Kudryavtseva \cite{Bolsinov2018}). As the name suggests, these singularities have a cuspidal shape in the image of the momentum map. The symplectic geometry of cusps has been given much attention recently, by Kudryavtseva \cite{Kudryavtseva2020}, Kudryavtseva and Martynchuk \cite{Kudryavtseva2021,Kudryavtseva2021b}, Kudryavtseva and Oshemkov \cite{Kudryavtseva2022}, as well as the classical work by Lerman and Umanski{\u i} \cite{Lerman1994}.

A rank $0$ singularity that goes through a Hamiltonian Hopf bifurcation changes from elliptic-elliptic type to focus-focus type, or the other way around, see Figure \ref{fig:HHBifurcation}. If we know that the eigenvalues change their type like this, and they do so transversally (to be made precise below), then we know that the singularity undergoes a Hamiltonian Hopf bifurcation. Furthermore, to determine whether or not we are in the presence of a Hamiltonian Hopf bifurcation, it is sufficient to show that the Hamiltonian function can be put in a certain normal form, which we now describe.

\begin{figure}[tb]
    \centering
    \includegraphics{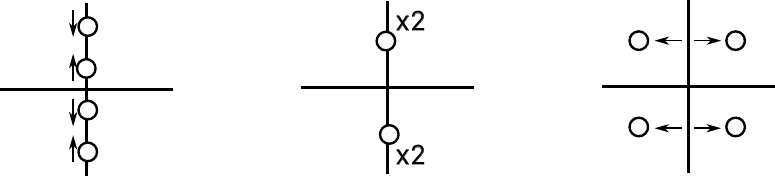}
    \caption{The eigenvalues for (from left to right) an elliptic-elliptic point, a degenerate point, and a focus-focus point, in the complex plane. The arrows indicates how the eigenvalues change with the bifurcation parameter $t$. The eigenvalues of the degenerate point have multiplicity $2$.}
    \label{fig:HHBifurcation}
\end{figure}

Let us introduce canonical coordinates $\{q_{1},q_{2},p_{1},p_{2}\}$ on $\mathcal{M}$, i.e.\ coordinates satisfying $\omega = \omega_{0} := dq_{1} \wedge dp_{1} + dq_{2} \wedge dp_{2}$. Let $t \in \R$ be a bifurcation parameter, and $H_{t} := f_{2} : \mathcal{M} \to \R$ be a Hamiltonian function for the integrable system defined in the beginning of this section. That is, we consider an integrable system with the momentum map $F = (f_{1},H_{t})$. Assume that $H_{t}$ has a singularity at $z_{0} \in \mathcal{M}$, and that $H_{t}(z_{0}) = 0$. We write the Taylor series expansion of $H_{t}$ at $z_{0}$ as $H_{t} = \sum_{k=2}^{\infty} H_{t}^{k}$, where $H_{t}^{k}$ denote the terms of $k$-th order. Williamson \cite{Williamson1936} and Cushman and Burgoyne \cite{Burgoyne1974a, Burgoyne1974b} found two normal forms for $H_{t}^{2}$ under real symplectic transformations. The normal forms depend on whether the linearisation of the Hamiltonian vector field of $H_{t}$ is semi-simple or not. Van der Meer \cite[Chapter 1.3]{vanderMeer1985} then defined the Hamiltonian Hopf bifurcation to be the one corresponding to the non-semi-simple case. Furthermore, he showed that the normal form of $H_{t}^{2}$ at the singularity and at the time of bifurcation, which we assume is $t = 0$, is given by
\begin{equation} \label{eq:HHB_normal_form_linear}
\widehat{H}_{0}^{2}(z_{0}) = \rho(q_{1}p_{2} - q_{2}p_{1}) + \frac{\sigma}{2}(q_{1}^{2} + q_{2}^{2}),
\end{equation}
for some $\rho \in \R \setminus \{0\}$ and $\sigma = \pm 1$. We denote normal forms by hats, e.g.\ $\widehat{H}_{t}$ and $\widehat{A}_{t}$, where $A_{t}$ denotes the linearisation of the Hamiltonian vector field of $H_{t}$. The normal form \eqref{eq:HHB_normal_form_linear} corresponds to the matrix
\begin{equation} \label{eq:HHB-normal-form-linear-matrix}
\widehat{A}_{0} =
\begin{pmatrix}
0 & -\rho & 0 & 0 \\
\rho & 0 & 0 & 0 \\
\sigma & 0 & 0 & -\rho \\
0 & \sigma & \rho & 0
\end{pmatrix}.
\end{equation}

\begin{definition} \label{def:HHB-linear}
For $i \in \{1,2\}$, let $\nu_{i} : I \to \R$ be two smooth functions on an open interval $I$ such that $\nu_{1}(0) = \nu_{2}(0) = 0$ and $(\partial \nu_{2} / \partial t)(0) \neq 0$. Assume that $H_{t}^{2}$ can be put in the normal form \eqref{eq:HHB_normal_form_linear} at $z_{0}$ (or equivalently, if $A_{t}$ can be put in the normal form \eqref{eq:HHB-normal-form-linear-matrix}). If $H_{t}^{2}$ has an unfolding of $z_{0}$, i.e. a germ $\widehat{H}_{t}^{2} : (\R^{4},0) \to (\mathcal{M},z_{0})$, given by
\begin{align}\label{eq:HHB_linear_unfolding}
\widehat{H}_{t}^{2}(z_{0}) = (\rho + \nu_{1}(t))(q_{1}p_{2} - q_{2}p_{1}) + \frac{\sigma}{2}(q_{1}^{2} + q_{2}^{2}) - \frac{\nu_{2}(t)}{2}(p_{1}^{2} + p_{2}^{2}),
\end{align}
then $H_{t}$ is said to go through a \emph{linear Hamiltonian Hopf bifucation} at $z_{0}$ and $t = 0$.
\end{definition}
\if
If $H_{t}^{2}$ can be put in this normal form, and has an unfolding of $z_{0}$, i.e. a germ $\widehat{H}_{t}^{2} : (\R^{4},0) \to (\mathcal{M},z_{0})$, given by
\begin{align}\label{eq:HHB_linear_unfolding}
\widehat{H}_{t}^{2}(z_{0}) = (\rho + \nu_{1}(t))(q_{1}p_{2} - q_{2}p_{1}) + \frac{\sigma}{2}(q_{1}^{2} + q_{2}^{2}) - \frac{\nu_{2}(t)}{2}(p_{1}^{2} + p_{2}^{2}),
\end{align}
where $\nu_{i} : I \to \R$ is a smooth function on an open interval $I$ for $i \in \{1,2\}$, then $H_{t}$ is said to go through a \emph{linear Hamiltonian Hopf bifucation} at $t = 0$ if $\nu_{1}(0) = \nu_{2}(0) = 0$ and $(\partial \nu_{2} / \partial t)(0) \neq 0$. 
\fi
The non-zero derivative condition in Definition \ref{def:HHB-linear} is the transversality condition, spoken of above. The unfolding of the matrix normal form \eqref{eq:HHB-normal-form-linear-matrix} is given by
\begin{equation} \label{eq:HHB-unfolding-linear-matrix}
\widehat{A}_{t} = 
\begin{pmatrix}
0 & -(\rho + \nu_{1}) & \nu_{2} & 0 \\
\rho + \nu_{1} & 0 & 0 & \nu_{2} \\
\sigma & 0 & 0 & -(\rho + \nu_{1}) \\
0 & \sigma & \rho + \nu_{1} & 0
\end{pmatrix}.
\end{equation}

Let us now consider the higher order normal form. Here we need to use some properties of the Poisson bracket, defined by $\{f_{1},f_{2}\} := \omega_{0}(X_{f_{1}},X_{f_{2}})$. Recall that, in canonical coordinates, we can write
\begin{equation} \label{eq:Poisson-bracket}
\{f_{1},f_{2}\} = \sum_{i=1}^{2} \left( \frac{\partial f_{1}}{\partial q_{i}}\frac{\partial f_{2}}{\partial p_{i}} - \frac{\partial f_{1}}{\partial p_{i}}\frac{\partial f_{2}}{\partial q_{i}} \right).
\end{equation}
As in Van der Meer \cite[Proof of Theorem 2.5]{vanderMeer1985}, let $G$ be a degree-$g$ homogeneous polynomial, and consider the adjoint action of $H_{t}$ by $G$, defined by $\ad_{G}H_{t} = \{G,H_{t}\}$. In particular, we consider the transformation $e^{\ad_{G}}H_{t}$. By using the power series expansion of $e^{x}$, and the Taylor expansion $H_{t} = \sum_{k=2}^{\infty} H_{t}^{k}$, we may write
\begin{align} \label{eq:transf-theory}
\begin{split}
e^{\ad_{G}}H_{t} &
= \left( \sum_{n=0}^{\infty} \frac{(\ad_{G})^{n}}{n!} \right) \sum_{k=2}^{\infty} H_{t}^{k} \\&
= (H_{t}^{2} + H_{t}^{3} + \cdots) + \ad_{G}(H_{t}^{2} + H_{t}^{3} + \cdots) + \frac{1}{2} \ad_{G}^{2}(H_{t}^{2} + H_{t}^{3} + \cdots) + \cdots.
\end{split}
\end{align}
Note that, using \eqref{eq:Poisson-bracket}, $\ad_{G}H_{t}^{k}$ has degree $g+k-2$. Let $G = G^{3}$ be a degree-$3$ homogeneous polynomial. The terms of degree $3$ are $H_{t}^{3} + \ad_{G^{3}}H_{t}^{2}$. We split the first of these terms into $H_{t}^{3} = \tilde{H}_{t}^{3} + \check{H}_{t}^{3}$, where $\tilde{H}_{t}^{3} \in \im(\ad_{H_{t}^{2}})$. This means that, by appropriately choosing $G^{3}$, we may simplify the degree-$3$ terms, namely by choosing a $G^{3}$ that solves $\tilde{H}_{t}^{3} + \ad_{G^{3}}H_{t}^{2} = \tilde{H}_{t}^{3} - \ad_{H_{t}^{2}}G^{3} = 0$. This can be extended to any degree, by appropriately choosing $G = G^{g}$, $g \geq 3$, and so we may remove all terms in $\im(\ad_{H_{t}^{2}})$ from \eqref{eq:transf-theory}. Thus, we say that $H_{t}^{k}$ is in normal form with respect to $H_{t}^{2}$ if $H_{t}^{k}$ lies in the complement of $\im(\ad_{H_{t}^{2}})$.

Next, let us make this complement a bit more concrete. Let $X_{f}$ denote the Hamiltonian vector field corresponding to a function $f$. In the normal form \eqref{eq:HHB_normal_form_linear}, we define $S := q_{1}p_{2} - q_{2}p_{1}$ and $M := \frac{1}{2}(q_{1}^{2} + q_{2}^{2})$, i.e.\ we write $\widehat{H}_{0}^{2} = \rho S + \sigma M$. This is the Jordan-Chevalley decomposition (decomposition into commuting semi-simple and niloptent matrices) of $\widehat{H}_{0}^{2}$. Indeed, note that the linearisation of $X_{S}$ is semi-simple, the linearisation of $X_{M}$ is nilpotent, and $\{S,M\} = 0$, where the Poisson bracket $\{\cdot,\cdot\}$ is defined in terms of the canonical symplectic structure. The functions $S$ and $M$ are two of the four Hilbert generators for the algebra of polynomials invariant under the action of the one parameter group corresponding to the flow of $X_{S}$ (see Van der Meer \cite[p.\ 57]{vanderMeer1985}). The remaining Hilbert generators are $N := \frac{1}{2}(p_{1}^{2} + p_{2}^{2})$ and $T := q_{1}p_{1} + q_{2}p_{2}$, and they satisfy $4MN = S^{2} + T^{2}$. The Hilbert generators $S,M,N,T$ satisfy the following Poisson bracket relations:
\begin{align} \label{eq:bracket-relations}
\{M,N\} = T, \quad
\{M,T\} = 2M, \quad
\{N,T\} = -2N, \quad
\{S,M\} = \{S,N\} = \{S,T\} = 0.
\end{align}
Thus, the span of $\im(\ad_{H_{t}^{2}})$ is $\spenn(M,T)$. The normal form with respect to $H_{t}^{2}$, lying in the complement of $\im(\ad_{H_{t}^{2}})$, lies then in the span of $S$ and $N$. Noticing that $\ker(\ad_{S}) = \spenn(S,M,N,T)$ and that $\ker(\ad_{N}) = \spenn(S,N)$ leads us to the following definition:

\begin{definition}[Van der Meer {\cite[Definition 2.1]{vanderMeer1993}}] \label{def:HHB-nonlinear}
\hfill
\begin{enumerate}[(i)]
    \item $H_{0}^{k}$ is in normal form with respect to $H_{0}^{2}$ if $H_{0}^{k} \in \ker(\ad_{S}) \cap \ker(\ad_{N})$.
    \item $H_{0}$ is in normal form up to order $k$ with respect to $H_{0}^{2}$ if $H_{0}^{l}$, $2 < l < k + 1$, is in normal form with respect to $H_{0}^{2}$.
\end{enumerate}
\end{definition}

By the bracket relations \eqref{eq:bracket-relations}, Definition \ref{def:HHB-nonlinear} implies that, at the time of bifurcation $t = 0$, the normal form of $H_{t}$ up to order $k$ is
\begin{equation} \label{eq:HHB-nonlin-normal-form-def}
\widehat{H}_{0} = \rho S + \sigma N + \sum_{l=2}^{k} \widehat{H}_{0}^{l}(M,S) + \text{higher order terms},
\end{equation}
where $\rho \in \R \setminus\{0\}$, $\sigma = \pm 1$, and the tuple $(M,S)$ in $\widehat{H}_{0}^{l}(M,S)$ means that $\widehat{H}_{0}^{l}$ only depends on the Hilbert generators $M$ and $S$, and not on $N$ and $T$.

\begin{remark}
In physics one usually says that the $q$-coordinates correspond to position, and $p$-coordinates correspond to momentum. In our context, however, there is no difference between these coordinates. Hence, the normal form in \eqref{eq:HHB-nonlin-normal-form-def} after changing the role of $M$ and $N$ still is a normal form for a Hamiltonian Hopf bifurcation.
\end{remark}

\begin{remark}
Van der Meer \cite[Section 3]{vanderMeer1982} showed that the normal form is void of all terms of odd degree.
\end{remark}

\begin{definition}
Let $H_{t}$ be a Hamiltonian function with the normal form at $t = 0$ given by \eqref{eq:HHB-nonlin-normal-form-def}. Furthermore, let the coefficient of $M^{2}$ in $\widehat{H}_{0}^{2}(M,S)$ be denoted by $a$. If $a \neq 0$, then the bifurcation is said to be \emph{non-degenerate}. If $\sigma a > 0$, the bifurcation is said to be \emph{supercritical}, and if $\sigma a < 0$, then the bifurcation is said to be \emph{subcritical}. If $a = 0$, but the coefficient of $M^{3}$ in $\widehat{H}_{0}^{3}(M,S)$ is non-zero, then the bifurcation is said to be \emph{degenerate}. In general, if the coefficient of $M^{k}$ for $2 \leq k \leq n + 1$ is zero but the coefficient of $M^{n+2}$ is non-zero, then we call the bifurcation \emph{$n$-degenerate}. 
\end{definition}

Note that if an integrable system undergoes a subcritical Hamiltonian Hopf bifurcation, then a so-called  \emph{flap} appears in the image of its momentum map (see Van der Meer \cite[Chapter 4]{vanderMeer1985}). A flap (see for instance Efstathiou and Giacobbe \cite{Efstathiou2012}) appears as an additional sheet in the image of the momentum map, connected to the original one along a line-segment of hyperbolic-regular values. To be more precise, consider a two-sheeted domain as in Figure \ref{fig:local_flap}. The sheet bounded by the elliptic-regular and hyperbolic-regular critical values is called a \emph{local flap}. The last line in the boundary of the local flap consists of regular values, and is called the \emph{free boundary} of the local flap. The other sheet is called the \emph{local base} of the local flap. One obtains a flap by gluing two local flaps by their respective free boundary, see Figure \ref{fig:flap}.

The gluing procedure discussed in the previous paragraph may be done in another order. If $\mathcal{F}_{1}$ is one local flap with local base $\mathcal{B}_{1}$, and $\mathcal{F}_{2}$ is another local flap with local base $\mathcal{B}_{2}$, then one obtains a \emph{pleat} (sometimes called a swallowtail) by gluing the free boundary of $\mathcal{F}_{1}$ to $\mathcal{B}_{2}$, and gluing the free boundary of $\mathcal{F}_{2}$ to $\mathcal{B}_{1}$, see Figure \ref{fig:pleat}. In Section \ref{sec:momentum_map} it is shown that also this structure occurs in Gaudin models. The bifurcation leading to pleats, usually called the swallowtail bifurcation, were discussed by, e.g., Efstathiou and Sugny \cite{EfstathiouSugny}. 

We have seen that if the eigenvalues of a rank $0$ singularity collide on the imaginary axis, and split off transversally into the complex plane, then the singularity undergoes a Hamiltonian Hopf bifurcation. A clear-cut method for telling where and when a pleat bifurcation happens is not known to the author. However, by studying the singularities of the Hamiltonian, we are still able to tell some basic information about (some of) the pleat bifurcations that appear in the Gaudin models.

\begin{figure}[tb]
    \centering
    \begin{subfigure}[t]{0.3\linewidth}
    \includegraphics[scale=0.3]{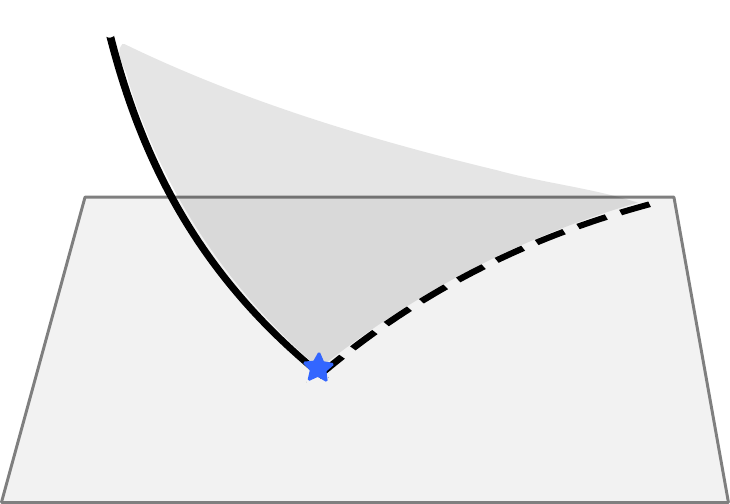}
    \caption{A local flap.}
    \label{fig:local_flap}
    \end{subfigure}
    \begin{subfigure}[t]{0.3\linewidth}
    \includegraphics[scale=0.3]{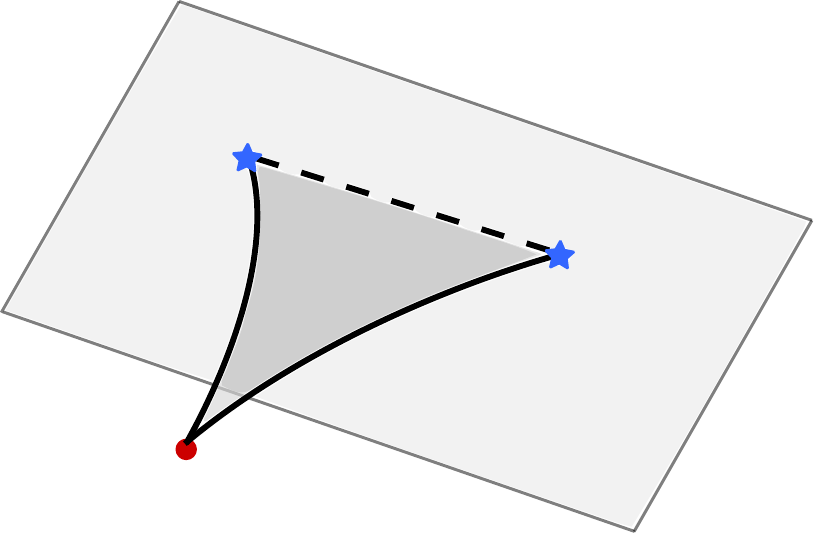}
    \caption{A flap.}
    \label{fig:flap}
    \end{subfigure}
    \begin{subfigure}[t]{0.3\linewidth}
    \includegraphics[scale=0.3]{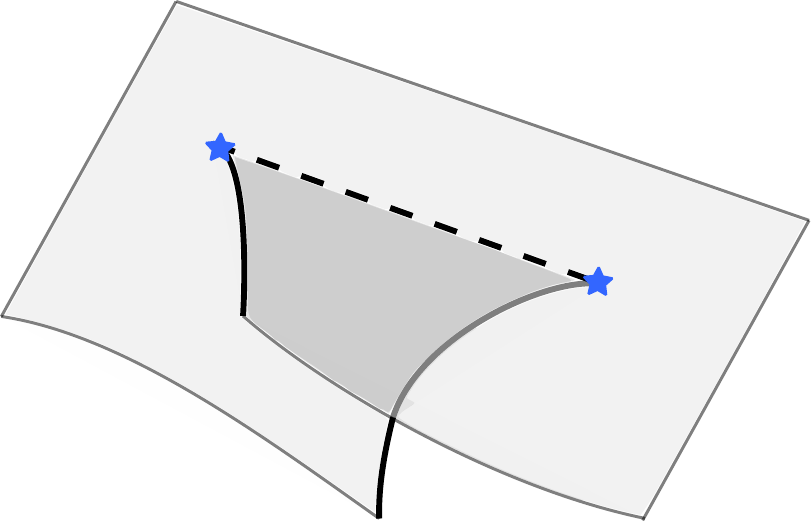}
    \caption{A pleat.}
    \label{fig:pleat}
    \end{subfigure}
    \caption{The figure shows \subref{fig:local_flap} a local flap, \subref{fig:flap} a flap, and \subref{fig:pleat} a pleat. The dashed line segments indicate the hyperbolic-regular values, whilst the thick black line segments indicate elliptic-regular values. The blue stars at the two ends of the hyperbolic-regular line indicate cusp values, and the red dot, where two elliptic-regular lines meet, indicates an elliptic-elliptic value.}
    \label{fig:flap_pleat}
\end{figure}
\section{Gaudin models undergo linear Hamiltonian Hopf bifurcations} \label{sec:linear-HHB}

Recall the integrable system $(\mathcal{M},\omega,(J,H_{w,\mathbf{t}}))$ from \eqref{eq:Gaudin-system-intro}, where the integrals of motion were given by
\begin{align} \label{eq:Gaudin-system}
\begin{cases}
J(x_{1},y_{1},z_{1},x_{2},y_{2},z_{2}) = R_{1}z_{1} + R_{2}z_{2}, \\
H_{w,\textbf{t}}(x_{1},y_{1},z_{1},x_{2},y_{2},z_{2}) = t_{0}(z_{1} + z_{2})^{2} + w(t_{1}z_{1} + t_{2}z_{2}) + t_{3}(x_{1}x_{2} + y_{1}y_{2}) + t_{4}z_{1}z_{2}.
\end{cases}
\end{align}
with $w \in \R$, $\textbf{t} = (t_{0},t_{1},t_{2},t_{3},t_{4}) \in \R^{5}$, and $(x_{i},y_{i},z_{i})$ being Cartesian coordinates on the $\mathbb{S}^{2}$-spheres, $i \in \{1,2\}$. In what follows, we simply write the integrals as $J$ and $H_{w,\mathbf{t}}$, skipping the coordinates.

The Hamiltonian Hopf bifurcation is a bifurcation of a singularity of rank 0, i.e.\ a singularity of both $J$ and $H_{w,\mathbf{t}}$. The integral $J$ has precisely four singularities, given by
\begin{align*}
\begin{split}
\begin{cases}
m_{0} := (0,0,1,0,0,-1), \\
m_{1} := (0,0,-1,0,0,-1), \\
m_{2} := (0,0,-1,0,0,1), \\
m_{3} := (0,0,1,0,0,1)
\end{cases}
\end{split}
\end{align*}
(see for example Le Floch and Pelayo \cite[Lemma 2.4]{LeFlochPelayoCAM}). It is easily verified that also $H_{w,\mathbf{t}}$ for all $\mathbf{t}$ has singularities at $m_{0}$, $m_{1}$, $m_{2}$, and $m_{3}$, and so these are the only candidates that may undergo a Hamiltonian Hopf bifurcation for specific values of $\mathbf{t}$.

Let us define 
\begin{align*}
&t_{4,m_{0}}^{\pm} := t_{4,m_{0}}^{\pm}(R_{1},R_{2},w,t_{1},t_{2},t_{3}) := \frac{w(t_{1}R_{2} - t_{2}R_{1}) \pm 2t_{3}\sqrt{R_{1}R_{2}}}{R_{1} + R_{2}}, \\
&t_{4,m_{2}}^{\pm} := t_{4,m_{2}}^{\pm}(R_{1},R_{2},w,t_{1},t_{2},t_{3}) := \frac{-w(t_{1}R_{2} - t_{2}R_{1}) \pm 2t_{3}\sqrt{R_{1}R_{2}}}{R_{1} + R_{2}},
\end{align*}
where we always assume the dependence on the parameters.

\begin{lemma} \label{lem:rank0type}
Let $t_{3} \neq 0$. The singularities $m_{1}$ and $m_{3}$ are of elliptic-elliptic type for any $(t_{0},t_{1},t_{2},t_{3},t_{4}) \in \R^{5}$. For $k \in \{0,2\}$, the singularity $m_{k}$ is of elliptic-elliptic type if $t_{4} < t_{4,m_{k}}^{-}$ or $t_{4} > t_{4,m_{k}}^{+}$. If either
\begin{itemize}
    \item $R_{1} = R_{2}$, $w \neq 0$, and $t_{1} + t_{2} \neq 0$, or
    \item $R_{1} \neq R_{2}$ and $(R_{1} - R_{2})t_{3} > \abs{w(t_{1} + t_{2})\sqrt{R_{1}R_{2}}}$,
\end{itemize}
then for $t_{4,m_{k}}^{-} < t_{4} < t_{4,m_{k}}^{+}$, $m_{k}$ is a focus-focus point. If neither of these conditions are met, $m_{k}$ is an elliptic-elliptic point in this region. Finally, for $t_{4} \in \{t_{4,m_{k}}^{-},t_{4,m_{k}}^{+}\}$, $m_{k}$ is degenerate.
\end{lemma}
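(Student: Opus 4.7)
The plan is to linearise the Hamiltonian vector field of $H_{w,\mathbf{t}}$ at each rank $0$ singularity $m_{k}$, compute the eigenvalues of the resulting matrix $A_{k} \in \textup{sp}(4,\R)$, and invoke Williamson's classification from Section \ref{sec:HHB-theory}. First I would introduce local canonical coordinates around each $m_{k}$: near the north pole of a sphere, $z_{i} = 1 - \tfrac{1}{2}(x_{i}^{2}+y_{i}^{2}) + O(4)$, and near the south pole $z_{i} = -1 + \tfrac{1}{2}(x_{i}^{2}+y_{i}^{2}) + O(4)$; an appropriate rescaling of $(x_{i}, y_{i})$ by $\sqrt{R_{i}}$, together with a sign flip at the south pole to compensate for orientation reversal, yields Darboux coordinates. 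The quadratic part $H_{w,\mathbf{t}}^{2}$ is then directly computed: $t_{0}(z_{1}+z_{2})^{2}$ is of fourth order at $m_{0}$ and $m_{2}$ but contributes diagonal quadratic terms at $m_{1}$ and $m_{3}$; $w(t_{1}z_{1}+t_{2}z_{2})$ always contributes diagonal terms; $t_{3}(x_{1}x_{2}+y_{1}y_{2})$ couples the two planes; and $t_{4}z_{1}z_{2}$ contributes diagonal terms with signs depending on $m_{k}$.

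The crucial simplification is that the $S^{1}$-action generated by $J$ commutes with $X_{H_{w,\mathbf{t}}}$, so its linearisation commutes with $A_{k}$. At $m_{k}$, this action rotates each $(x_{i},y_{i})$-plane, with the sense of rotation in each plane determined by the sign of $z_{i}(m_{k})$. Passing to complex coordinates $\zeta_{i} = x_{i} \pm i y_{i}$ (with signs dictated by which pole of each sphere one sits at) block-diagonalises $A_{k}$ and reduces its characteristic polynomial to a biquadratic $\lambda^{4} + B\lambda^{2} + C = 0$ with $B$ and $C$ explicit polynomials in the parameters. Setting $\mu = \lambda^{2}$, the Williamson type at $m_{k}$ is read off from the roots of $\mu^{2}+B\mu+C$: elliptic-elliptic when both roots are negative ($B>0$, $C>0$, $\Delta := B^{2}-4C \geq 0$), focus-focus when the roots are complex conjugate ($\Delta < 0$), and degenerate of the non-semi-simple kind relevant for the Hamiltonian Hopf bifurcation when $\Delta = 0$.

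The last step is to read off the statement of the lemma from the explicit formulas for $B$, $C$, and $\Delta$. For $k \in \{0,2\}$, $\Delta$ turns out to be a quadratic in $t_{4}$ whose roots are precisely $t_{4,m_{k}}^{\pm}$, giving the stated trichotomy: elliptic-elliptic outside $[t_{4,m_{k}}^{-}, t_{4,m_{k}}^{+}]$, degenerate at the endpoints, and focus-focus inside the open interval provided $\Delta$ actually dips below zero there. For $k \in \{1,3\}$, the sign structure of $H_{w,\mathbf{t}}^{2}$ combined with $t_{3} \neq 0$ will force $\Delta \geq 0$ and both roots of $\mu^{2}+B\mu+C$ negative independently of $t_{4}$, yielding elliptic-elliptic for all parameter values. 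The auxiliary hypotheses distinguishing focus-focus from elliptic-elliptic on the open interval, namely $R_{1} = R_{2}$ with $w(t_{1}+t_{2}) \neq 0$, or $(R_{1}-R_{2})t_{3} > |w(t_{1}+t_{2})\sqrt{R_{1}R_{2}}|$, should correspond precisely to the leading coefficient of the quadratic $\Delta(t_{4})$ being positive, which is exactly what is needed for $\Delta$ to dip below zero between its roots. The main obstacle will be the sign bookkeeping across the four $m_{k}$ and extracting these clean algebraic dichotomies from what will initially be a somewhat unwieldy polynomial identity; the $R_{1}=R_{2}$ sub-case requires separate treatment because the degree of $\Delta(t_{4})$ drops there.
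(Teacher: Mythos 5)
Your computational skeleton (linearise $X_{H_{w,\mathbf{t}}}$ at each $m_{k}$, use the $\mathbb{S}^{1}$-symmetry generated by $J$ to block-diagonalise, and read the Williamson type off a biquadratic characteristic polynomial) is exactly the route the paper takes: its proof simply defers to Le Floch--Pelayo's Proposition 2.5, which is this computation, and your plan does deliver the claims about $m_{1},m_{3}$, the elliptic-elliptic region outside $[t_{4,m_{k}}^{-},t_{4,m_{k}}^{+}]$, and the degeneracy at the endpoints. The gap is in your last step, where the two bulleted hypotheses are supposed to separate focus-focus from elliptic-elliptic inside the interval. Writing the linearisation at $m_{0}$ as $M\otimes\left(\begin{smallmatrix}0&1\\-1&0\end{smallmatrix}\right)$ with $M=\left(\begin{smallmatrix}(wt_{1}-t_{4})/R_{1} & -t_{3}/R_{1}\\ t_{3}/R_{2} & (wt_{2}+t_{4})/R_{2}\end{smallmatrix}\right)$, the eigenvalues are $\pm i\mu_{1,2}$ with $\mu_{1,2}$ the eigenvalues of $M$, and the quantity whose vanishing gives $t_{4,m_{0}}^{\pm}$ is the reduced discriminant $D(t_{4})=(\operatorname{tr}M)^{2}-4\det M=\bigl(\tfrac{wt_{1}-t_{4}}{R_{1}}-\tfrac{wt_{2}+t_{4}}{R_{2}}\bigr)^{2}-\tfrac{4t_{3}^{2}}{R_{1}R_{2}}$. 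Its leading coefficient in $t_{4}$ is $\bigl(\tfrac{R_{1}+R_{2}}{R_{1}R_{2}}\bigr)^{2}$, which is positive for \emph{all} parameter values, so ``$\Delta$ dips below zero between its roots'' happens automatically whenever $t_{3}\neq 0$ and the interval is nonempty; the bulleted hypotheses cannot be recovered as a sign condition on a leading coefficient, contrary to your final paragraph.

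What those hypotheses actually concern is the other invariant, $\operatorname{tr}M=\tfrac{w(R_{2}t_{1}+R_{1}t_{2})+(R_{1}-R_{2})t_{4}}{R_{1}R_{2}}$, equivalently the value $\alpha$ at which the eigenvalues collide on the imaginary axis at $t_{4}=t_{4,m_{0}}^{\pm}$ (the paper's $\alpha=\tfrac{w(t_{1}+t_{2})\sqrt{R_{1}R_{2}}+t_{3}(R_{1}-R_{2})}{\sqrt{R_{1}R_{2}}(R_{1}+R_{2})}$ in the proof of Theorem \ref{thm:linearHHB}): the first bullet makes $\operatorname{tr}M$ a nonzero constant in $t_{4}$, while the second controls the location of its zero $t_{4}^{*}=-w(R_{2}t_{1}+R_{1}t_{2})/(R_{1}-R_{2})$ relative to the interval. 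Note also that your $\Delta:=B^{2}-4C$ equals $(\operatorname{tr}M)^{2}D(t_{4})$, which is quartic in $t_{4}$ for $R_{1}\neq R_{2}$, so it is not ``a quadratic whose roots are precisely $t_{4,m_{k}}^{\pm}$''; and the $R_{1}=R_{2}$ case is special not because the degree of the discriminant drops (it never does) but because $\operatorname{tr}M$ loses its $t_{4}$-dependence there. Until the trace enters your analysis --- and, since $\operatorname{tr}M$ can vanish at an isolated $t_{4}$ inside the interval, until you determine the type from a generic combination $c_{1}J+c_{2}H_{w,\mathbf{t}}$ as in Section \ref{sec:HHB-theory} rather than from $X_{H_{w,\mathbf{t}}}$ alone --- the dichotomy asserted in the bullets, in particular the ``neither condition holds $\Rightarrow$ elliptic-elliptic'' clause, does not follow from your argument.
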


\begin{remark}
If $w = 0$, then the condition $t_{1} + t_{2} \neq 0$ is unnecessary, as the effects of both $t_{1}$ and $t_{2}$ are not present in this case.
\end{remark}

\begin{remark}
If we let $t_{3} = 0$, then the linearisation of the Hamiltonian vector field of $H_{w,\mathbf{t}}$ at $m_{1}$ and $m_{3}$ has repeated eigenvalues for 
\begin{align*}
t_{4} = -4t_{0} - 2\frac{R_{2}t_{1} - R_{1}t_{2}}{R_{1}-R_{2}} \quad \text{and} \quad t_{4} = -4t_{0} + 2\frac{R_{2}t_{1} - R_{1}t_{2}}{R_{1}-R_{2}},
\end{align*}
respectively. Hence, $m_{1}$ and $m_{3}$ are degenerate singularities under these circumstances, depicted in Figure \ref{fig:t3=0_t4=1}. With $t_{3} \neq 0$, this can only happen for imaginary $t_{4}$. Furthermore, with $t_{3} = 0$, the image of the momentum map can look as in Figure \ref{fig:t3=0}, where the darker shaded area is covered twice, and the black curves corresponds to the boundary. We conjecture that the edges with no black curves are so-called fold singularities, as described in Giacobbe \cite{Giacobbe2007}.
\end{remark}

\begin{figure}[tb]
\def\scale{0.8}
\centering
\begin{tabular}{cc}
    \subfloat[$t_{4} = -1$. \label{fig:t3=0_t4=-1}]{\includegraphics[scale=\scale]{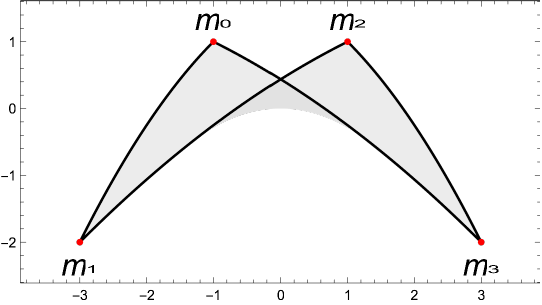}} & 
    \subfloat[$t_{4} = 1$. \label{fig:t3=0_t4=1}]{\includegraphics[scale=\scale]{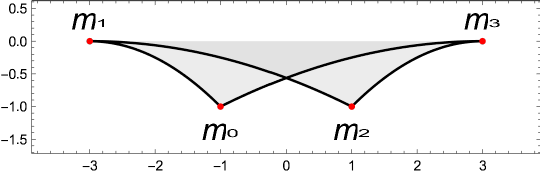}}
\end{tabular}
\caption{The image of the momentum map for two values of $t_{4}$ with $R_{1} = 1$, $R_{2} = 2$, $w = 0$, $t_{0} = -0.25$, and $t_{3} = 0$.}
\label{fig:t3=0}
\end{figure}

\begin{proof}[Proof of Lemma \ref{lem:rank0type}]
The proof follows exactly the same steps as Le Floch and Pelayo's \cite[Proposition 2.5]{LeFlochPelayoCAM} proof concerning the coupled angular momenta system, which corresponds to the special case $w = 1$, $t_{0} = t_{2} = 0$, and $t_{3} = t_{4} = 1 - t_{1}$.
\end{proof}

Lemma \ref{lem:rank0type} describes behaviour characteristic for the Hamiltonian Hopf bifurcation. Note that, for $k \in \{0,2\}$, $t_{4,m_{k}}^{\pm}$ is independent of $t_{0}$. The next theorem gives rigorous proof that the Gaudin model defined in \eqref{eq:Gaudin-system} actually undergoes (linear) Hamiltonian Hopf bifurcations at $t_{4,m_{k}}^{\pm}$. 

\begin{theorem} \label{thm:linearHHB}
Let $k \in \{0,2\}$. If the conditions necessary for focus-focus points in Lemma \ref{lem:rank0type} are met, then, for both $t_{4} = t_{4,m_{k}}^{+}$ and $t_{4} = t_{4,m_{k}}^{-}$, the point $m_{k}$ undergoes a linear Hamiltonian Hopf bifurcation.
\end{theorem}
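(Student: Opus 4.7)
The plan is to verify Definition \ref{def:HHB-linear} directly. Fix $k \in \{0,2\}$. My first step would be to introduce local Darboux-like coordinates around $m_k$. Near the pole of each $\mathbb{S}^{2}$-factor, the coordinates $(x_{i},y_{i})$ give a chart in which $z_{i} = \pm\sqrt{1 - x_{i}^{2} - y_{i}^{2}}$ and the weighted area form $R_{i}\omega_{\mathbb{S}^{2}}$ pulls back to $\pm R_{i}\, dx_{i}\wedge dy_{i}/z_{i}$, which is not canonical. Using a Moser-type argument (this is the content of the later Lemma \ref{lem:j-jet-Hamiltonian}), I would produce smooth coordinates $(q_{1},p_{1},q_{2},p_{2})$ centred at $m_{k}$ in which $\omega = dq_{1}\wedge dp_{1} + dq_{2}\wedge dp_{2}$, at least up to second order, which is all that is required in order to identify $H^{2}_{w,\mathbf{t}}$ with its image under the coordinate change.

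Next I would expand $H_{w,\mathbf{t}}$ in these coordinates around $m_{k}$ and extract the quadratic part $H^{2}_{w,\mathbf{t}}$; the corresponding linearised Hamiltonian vector field is a family of matrices $A_{t_{4}}$ in $\mathfrak{sp}(4,\R)$ depending smoothly on the bifurcation parameter $t_{4}$. By Lemma \ref{lem:rank0type}, the spectrum of $A_{t_{4}}$ consists of two pairs of purely imaginary eigenvalues for $t_{4}$ outside $[t_{4,m_{k}}^{-}, t_{4,m_{k}}^{+}]$, collides into a double imaginary pair at the endpoints, and forms a focus-focus quadruple in the interior. To conclude that $A_{t_{4,m_{k}}^{\pm}}$ is non-semisimple I would check that the eigenvalue $i\rho$ has a one-dimensional geometric eigenspace, so that a Jordan block of size two is forced.

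Given this, I would construct an explicit linear symplectic change of variables bringing $A_{t_{4,m_{k}}^{\pm}}$ into the normal form \eqref{eq:HHB-normal-form-linear-matrix}. Concretely, I would compute an eigenvector $v \in \C^{4}$ for $i\rho$ and a generalised eigenvector $u$ with $(A - i\rho)u = v$; the real and imaginary parts of $v$ and $u$, rescaled by the Krein-type signature $\inner{\omega_{0}(v,\bar{v})}$ so that the resulting basis is symplectic, provide the desired change of coordinates. Applying this same transformation to the whole family $A_{t_{4}}$ yields a matrix of the form \eqref{eq:HHB-unfolding-linear-matrix} with smooth functions $\nu_{1}(t_{4})$, $\nu_{2}(t_{4})$ vanishing at $t_{4} = t_{4,m_{k}}^{\pm}$, and an inspection of the transformed $H^{2}_{w,\mathbf{t}}$ produces the unfolding \eqref{eq:HHB_linear_unfolding} with these same $\nu_{1}, \nu_{2}$.

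The only remaining point is the transversality condition $(\partial \nu_{2}/\partial t_{4})|_{t_{4} = t_{4,m_{k}}^{\pm}} \neq 0$, and I expect this to be the main technical obstacle. By construction $\nu_{2}$ is (up to a non-zero factor depending on the choice of normalisation) the coefficient of $\tfrac{1}{2}(p_{1}^{2}+p_{2}^{2})$ in the transformed quadratic Hamiltonian, so I would compute $\partial A_{t_{4}}/\partial t_{4}|_{t_{4} = t_{4,m_{k}}^{\pm}}$ in the new basis and read off its $(1,3)$-and-$(2,4)$ entries. Because the transition from real to complex eigenvalues in Lemma \ref{lem:rank0type} occurs exactly at $t_{4,m_{k}}^{\pm}$ under the stated hypotheses on $R_{1},R_{2},w,t_{1},t_{2},t_{3}$, $\nu_{2}$ must change sign transversally at this value; the anticipated algebraic challenge is to verify this cleanly in both cases $R_{1} = R_{2}$ and $R_{1}\neq R_{2}$ of Lemma \ref{lem:rank0type} without lengthy case-by-case computation, and to confirm that the coefficient $\sigma$ produced by the normalisation is indeed $\pm 1$ rather than vanishing. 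Once this is done the hypotheses of Definition \ref{def:HHB-linear} are satisfied and the theorem follows.
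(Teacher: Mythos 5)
Your overall strategy (normalise the quadratic part at $m_k$, exhibit the unfolding \eqref{eq:HHB_linear_unfolding}, and check transversality) is the same as the paper's, which follows Cushman--Bates via the Burgoyne--Cushman algorithm; your eigenvector/generalised-eigenvector construction is an acceptable substitute for their Jordan--Chevalley step. However, there are two genuine gaps. First, you claim that applying the \emph{single} symplectic transformation constructed at $t_4 = t_{4,m_k}^{\pm}$ to the whole family $A_{t_4}$ "yields a matrix of the form \eqref{eq:HHB-unfolding-linear-matrix}". That is not automatic: conjugation by one fixed matrix puts $A_{t_4}$ in normal form only at the bifurcation value, and for nearby $t_4$ the conjugated matrix has no reason to have the special two-parameter structure of \eqref{eq:HHB-unfolding-linear-matrix}. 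The paper has to invoke the Cushman--Bates result that there is a \emph{smooth family} $t_4 \mapsto P_{t_4}$ of symplectic conjugations bringing the family into the form $Y_{t_4}$, and then extracts $\nu_1,\nu_2$ indirectly, by equating the characteristic polynomials of $Y_{t_4}$ and $A_{t_4}$ (which are smoothly conjugate), rather than by reading off matrix entries as you propose. You need either this family-of-conjugations argument or an equivalent parametrised normal form statement; without it the existence of the unfolding with smooth $\nu_1,\nu_2$ is unproven.

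Second, and more seriously, your justification of the transversality condition is not valid. You write that because the spectral transition in Lemma \ref{lem:rank0type} "occurs exactly at $t_{4,m_k}^{\pm}$", the function $\nu_2$ "must change sign transversally". The change of eigenvalue type only tells you that $\nu_2$ changes sign; it does not exclude a degenerate crossing such as $\nu_2 \sim (t_4 - t_{4,m_k}^{\pm})^3$, which produces the same qualitative elliptic-elliptic/focus-focus picture but violates $(\partial\nu_2/\partial t_4)\neq 0$. This derivative is precisely the content of the theorem beyond Lemma \ref{lem:rank0type}, so it cannot be inferred from the lemma; it must be computed. The paper solves the characteristic-polynomial equations \eqref{eq:nu1nu2} explicitly, obtaining
\begin{equation*}
\frac{d\nu_{2}}{dt_{4}}\bigg|_{t_{4}=t_{4,m_{0}}^{+}} = -\frac{t_{3}(R_{1}+R_{2})}{(R_{1}R_{2})^{3/2}},
\end{equation*}
which is nonzero exactly because $t_3 \neq 0$ (note this computation needs no case distinction between $R_1 = R_2$ and $R_1 \neq R_2$). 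Until you carry out this computation, or an equivalent one, the proof is a plan with its decisive step missing; the same remark applies to your deferred verification that the eigenspace at $i\rho$ is one-dimensional (in the paper this is delivered by the explicit nonzero nilpotent part $\mathcal{N}$, again using $t_3 \neq 0$).
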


\begin{proof}
The proof is similar in all four cases, so we present only one of them, namely the bifurcation at $t_{4} = t_{4,m_{0}}^{+}$. Furthermore, the proof is inspired by Cushman and Bates' \cite[Section 8.1]{Cushman1997} proof, where they show that the Lagrange top undergoes a linear Hamiltonian Hopf bifurcation.

We split the proof into three steps. The first two steps follow from Burgoyne and Cushman's algorithm \cite{Burgoyne1974a, Burgoyne1974b}; in step 1, we find the Jordan-Chevalley decomposition of the linearised vector field, and in step 2 we use the the decomposition to find a symplectic coordinate change putting the Hamiltonian in the matrix version of the normal form \eqref{eq:HHB-normal-form-linear-matrix}. In step 3 we find an unfolding of $H_{\mathbf{t}}$ in the shape of \eqref{eq:HHB-unfolding-linear-matrix}, and show that the eigenvalues intersect the imaginary axis transversally.

\textit{Step 1:} We parameterise the two $2$-spheres by Cartesian coordinates, such that near $m_{0} = (0,0,1,0,0,-1)$, we have
\begin{align*}
z_{1}(x_{1},y_{1}) = \sqrt{1 - x_{1}^{2} - y_{1}^{2}}, \quad z_{2}(x_{2},y_{2}) = - \sqrt{1 - x_{2}^{2} - y_{2}^{2}}.
\end{align*}
Thus, near $m_{0}$ the symplectic form is
\begin{align*}
\omega = \frac{R_{1} dx_{1} \wedge dy_{1}}{\sqrt{1 - x_{1}^{2} - y_{1}^{2}}} - \frac{R_{2} dx_{2} \wedge dy_{2}}{\sqrt{1 - x_{2}^{2} - y_{2}^{2}}},
\end{align*}
and the integrals from \eqref{eq:Gaudin-system} are given by
\begin{align*}
\begin{cases}
J = R_{1} \sqrt{1 - x_{1}^{2} - y_{1}^{2}} - R_{2} \sqrt{1 - x_{2}^{2} - y_{2}^{2}}, \\
H_{w,\mathbf{t}} = t_{0} \left( \sqrt{1 - x_{1}^{2} - y_{1}^{2}} - \sqrt{1 - x_{2}^{2} - y_{2}^{2}} \right)^{2} \\\qquad\qquad + w \left( t_{1} \sqrt{1 - x_{1}^{2} - y_{1}^{2}} - t_{2} \sqrt{1 - x_{2}^{2} - y_{2}^{2}} \right) \\\qquad\qquad + t_{3} \left( x_{1}x_{2} + y_{1}y_{2} \right) - t_{4} \sqrt{1 - x_{1}^{2} - y_{1}^{2}}\sqrt{1 - x_{2}^{2} - y_{2}^{2}}.
\end{cases}
\end{align*}
The terms of second order in the Taylor series of $H_{w,\mathbf{t}}$ are then
\begin{align*}
H_{w,\mathbf{t}}^{2} = - \frac{wt_{1} - t_{4}}{2}(x_{1}^{2} + y_{1}^{2}) + \frac{wt_{2} + t_{4}}{2}(x_{2}^{2} + y_{2}^{2}) + t_{3} (x_{1}x_{2} + y_{1}y_{2}).
\end{align*}
Thus, the linearisation of the Hamiltonian vector field of $H_{w,\mathbf{t}}$ at $m_{0}$, in a basis of the tangent space $T_{m_{0}}\mathcal{M}$ associated with $(x_{1},y_{1},x_{2},y_{2})$, is given by
\begin{align*}
A_{t_{4}} :=
A_{H_{w,\mathbf{t}}}(R_{1},R_{2}) =
\left(
\begin{array}{cccc}
 0 & \frac{wt_{1}-t_{4}}{R_1} & 0 & -\frac{t_{3}}{R_1} \\
 -\frac{wt_{1}-t_{4}}{R_1} & 0 & \frac{t_{3}}{R_1} & 0 \\
 0 & \frac{t_{3}}{R_2} & 0 & \frac{wt_{2}+t_{4}}{R_2} \\
 -\frac{t_{3}}{R_2} & 0 & -\frac{wt_{2}+t_{4}}{R_2} & 0 \\
\end{array}
\right).
\end{align*}
The characteristic polynomial of $A_{t_{4,m_{0}}^{+}}$ is $p(\lambda) = (\lambda^{2} + \alpha^{2})^{2}$, where 
\begin{align*}
\alpha &
:= \alpha(R_{1},R_{2},w,\mathbf{t}) 
:= \frac{w(t_{1}+t_{2})\sqrt{R_{1}R_{2}} + t_{3}(R_{1}-R_{2})}{\sqrt{R_{1}R_{2}}(R_{1}+R_{2})}.
\end{align*}
With the characteristic polynomial in this shape, Burgoyne and Cushman \cite{Burgoyne1974a, Burgoyne1974b} give us an algorithm to find the Jordan-Chevalley decomposition of $A_{t_{4}}$, i.e.\ the unique decomposition $A_{t_{4,m_{0}}^{+}} = \mathcal{S} + \mathcal{N}$ such that $\mathcal{S}$ is semi-simple, $\mathcal{N}$ is nilpotent, and $\mathcal{S}\mathcal{N} = \mathcal{N}\mathcal{S}$. In particular, they showed that the semi-simple part is given by
\begin{equation*}
\mathcal{S} = A_{t_{4}} \left( \mathds{1} + \sum_{j=1}^{m-1} \binom{2j}{j} \left( \frac{p(A_{t_{4}})}{4\alpha^{2}} \right)^{j} \right),
\end{equation*}
where $\mathds{1}$ is the identity matrix, $p(A_{t_{4}}) = (A_{t_{4}}^{2} + \alpha^{2}\mathds{1})^{2}$, and $m$ is the integer such that $p(A_{t_{4}})^{m} = 0$ while $p(A_{t_{4}})^{m-1} \neq 0$. A short calculation shows that $m = 2$. Furthermore, $\mathcal{N} = A_{t_{4}} - \mathcal{S}$. Thus, we find
\begin{align*}
\mathcal{S} = 
\left(
\begin{array}{cccc}
 0 & \alpha & 0 & 0 \\
 -\alpha & 0 & 0 & 0 \\
 0 & 0 & 0 & \alpha \\
 0 & 0 & -\alpha & 0 \\
\end{array}
\right), \qquad
\mathcal{N} = 
\left(
\begin{array}{cccc}
 0 & -\frac{t_{3}}{\sqrt{R_{1}R_{2}}} & 0 & -\frac{t_{3}}{R_{1}} \\
 \frac{t_{3}}{\sqrt{R_{1}R_{2}}} & 0 & \frac{t_{3}}{R_{1}} & 0 \\
 0 & \frac{t_{3}}{R_{2}} & 0 & \frac{t_{3}}{\sqrt{R_{1}R_{2}}} \\
 -\frac{t_{3}}{R_{2}} & 0 & -\frac{t_{3}}{\sqrt{R_{1}R_{2}}} & 0 \\
\end{array}
\right).
\end{align*}

\textit{Step 2:} The next step of Burgoyne and Cushman's algorithm consists of finding a symplectic basis. First one needs to construct certain subspaces of $\R^{4}$. For $1 \leq i \leq m = 2$, let $K_{i} := \{ x \in \R^{4} : \mathcal{N}^{i}x = 0 \}$, and $K_{0} := \{0\}$. Note that $\mathcal{N}^{2} = 0$, and so $K_{2} = \R^{4}$. Furthermore, a simple calculation yields that $K_{1}$ is spanned by $(\sqrt{R_{2}},0,-\sqrt{R_{1}},0)$ and $(0,\sqrt{R_{2}},0,-\sqrt{R_{1}})$. 

The next move consists of recursively creating sets $W_{j}$. To this end, we define sets $E_{j}$ such that $W_{j} = E_{j} + W_{j-1}$, where we define $W_{0} := \{0\}$. Let $W_{j}^{\omega} := \{ x \in \R^{4} : \omega(x,y) = 0 \, \forall \, y \in W_{j} \}$ be the symplectic complement of $W_{j}$. We need to find the number $k_{j} \in [0,m]$ such that
\begin{equation*}
W_{j}^{\omega} \cap K_{k_{j}+1} = W_{j}^{\omega}, 
\quad \text{and} \quad 
W_{j}^{\omega} \cap K_{k_{j}} \neq W_{j}^{\omega}.
\end{equation*}
Note that, for $j = 0$, this is satisfied if we choose $k_{0} = 1$. Next, we need to choose some $e \not\in K_{k_{j}}$ and $\omega(e,\mathcal{N}^{k_{j}}e) = \epsilon_{j}$, where $\epsilon_{j}^{2} = 1$ (if $k_{j}$ is even, change $\mathcal{N}^{k_{j}}$ to $\mathcal{N}^{k_{j}}\mathcal{S}$). Again, for $j = 0$, let $\beta := \beta(R_{1},R_{2},t_{3}) := \frac{1}{2} \left(R_{1}R_{2}t_{3}^{2}\right)^{-1/4}$. Then we may for example choose $e = \beta (\sqrt{R_{2}},0,\sqrt{R_{1}},0) \not\in K_{1}$, which also satisfies $\omega(e,\mathcal{N}e) = 1$, i.e.\ $\epsilon_{0} = 1$. Now, $E_{j}$ is the space spanned by $\mathcal{N}^{l}e$, $\mathcal{N}^{l}\mathcal{S}e$, for $0 \leq l \leq k_{j}$. One repeats this construction until $W_{j} = E_{j} + W_{0} = \R^{4}$. In our case, only one iteration is necessary, as the matrix defined as the columns of $e$, $\mathcal{S}e$, $\mathcal{N}e$, and $\mathcal{N}\mathcal{S}e$, which we denote by $\textup{col} \left( e, \mathcal{S}e, \mathcal{N}e, \mathcal{N}\mathcal{S}e \right)$, has maximal rank.

Finally we construct a symplectic basis for $E_{j}$. Burgoyne and Cushman \cite{Burgoyne1974a} tell us that one constructs the basis as follows:
\begin{align*}
&f := e + \frac{1}{2\alpha^{2}} \omega(e,\mathcal{S}e)\mathcal{N}\mathcal{S}e = \beta \left( \sqrt{R_{2}}, 0, \sqrt{R_{1}}, 0 \right), 
&&\frac{1}{\alpha} \mathcal{S}f = \beta \left( 0, -\sqrt{R_{2}}, 0, -\sqrt{R_{1}} \right), \\
&\mathcal{N}f = \beta \left( 0, \frac{2t_{3}}{\sqrt{R_{1}}}, 0, -\frac{2t_{3}}{\sqrt{R_{2}}} \right), 
&&\frac{1}{\alpha}\mathcal{S}\mathcal{N} = \beta \left( \frac{2t_{3}}{\sqrt{R_{1}}}, 0, -\frac{2t_{3}}{\sqrt{R_{2}}}, 0 \right).
\end{align*}
It follows that $\omega(f,\mathcal{N}f) = 1 = \omega(\frac{1}{\alpha}\mathcal{S}f,\frac{1}{\alpha}\mathcal{S}\mathcal{N}f)$, and $\omega$ applied to the other basis vectors vanishes. Let $P_{t_{4,m_{0}}^{+}} := \textup{col}(f,\frac{1}{\alpha}\mathcal{S}f,\mathcal{N}f,\frac{1}{\alpha}\mathcal{S}\mathcal{N}f)$ be the matrix with columns $f$, $\frac{1}{\alpha}\mathcal{S}f$, $\mathcal{N}f$, and $\frac{1}{\alpha}\mathcal{S}\mathcal{N}f$. It maps the $(x_{1},y_{1},x_{2},y_{2})$-coordinates to the new, canonical, coordinates. Conjugating $A_{t_{4,m_{0}}^{+}}$ by $P_{t_{4,m_{0}}^{+}}$, i.e.\ performing the coordinate transformation, yields the desired matrix:
\begin{align*}
\widehat{A} := P_{t_{4,m_{0}}^{+}}^{-1} A_{t_{4,m_{0}}^{+}} P_{t_{4,m_{0}}^{+}} = 
\left(
\begin{array}{cccc}
 0 & -\alpha & 0 & 0 \\
 \alpha & 0 & 0 & 0 \\
 1 & 0 & 0 & -\alpha \\
 0 & 1 & \alpha & 0 \\
\end{array}
\right).
\end{align*}

\textit{Step 3:} Finally, we prove that the eigenvalues intersect the imaginary axis transversally, by finding the unfolding given by \eqref{eq:HHB-unfolding-linear-matrix}. For this, we aim to find a smooth family $t_{4} \to V_{t_{4}}$ of symplectic matrices with respect to $(\R^{4},\omega_{0})$ such that $V_{t_{4,m_{0}}^{+}} = \widehat{A}$. Consider the following coordinate change:
\begin{align} \label{eq:Q}
Q = 
\begin{pmatrix}
\frac{1}{\sqrt{2R_{1}}} & 0 & 0 & \frac{1}{\sqrt{2R_{1}}} \\
0 & -\frac{1}{\sqrt{2R_{1}}} & \frac{1}{\sqrt{2R_{1}}} & 0 \\
-\frac{1}{\sqrt{2R_{2}}} & 0 & 0 & \frac{1}{\sqrt{2R_{2}}} \\
0 & \frac{1}{\sqrt{2R_{2}}} & \frac{1}{\sqrt{2R_{2}}} & 0 \\
\end{pmatrix}.
\end{align}
Then $Q^{\intercal} \Omega Q = \Omega_{0}$, where ${}^{\intercal}$ denotes the transpose, and $\Omega$ and $\Omega_{0}$ are the matrices of the symplectic form $\omega$ (in the old basis) and the canonical symplectic form $\omega_{0}$ (in the new basis), respectively. To obtain the smooth family $t_{4} \to V_{t_{4}}$, consider the transformed smooth family $t_{4} \to U_{t_{4}} = Q^{-1} A_{t_{4}} Q$. If we define $Q' := Q^{-1}P_{t_{4,m_{0}}^{+}}$, then $(Q')^{-1}U_{t_{4,m_{0}}^{+}}Q' = \widehat{A}$. Thus, the family $t_{4} \to V_{t_{4}} = (Q')^{-1}U_{t_{4}}Q'$ is the one we sought.

Following Cushman and Bates \cite[pp.\ 258-260]{Cushman1997}, the family $t_{4} \to V_{t_{4}}$ can be transformed into the smooth normal form
\begin{align*}
Y_{t_{4}} = 
\begin{pmatrix}
0 & -(\alpha + \nu_{1}(t_{4})) & \nu_{2}(t_{4}) & 0 \\
\alpha + \nu_{1}(t_{4}) & 0 & 0 & \nu_{2}(t_{4}) \\
1 & 0 & 0 & -(\alpha + \nu_{1}(t_{4})) \\
0 & 1 & \alpha + \nu_{1}(t_{4}) & 0
\end{pmatrix},
\end{align*}
where $\nu_{j} : I \to \R$ is a smooth function on some interval $I$ containing $t_{4,m_{0}}^{+}$, such that $\nu_{j}(t_{4,m_{0}}^{+}) = 0$, for $j \in \{1,2\}$. We want to find expressions for $\nu_{j}$. Note that they do not (necessarily) depend only on $t_{4}$, but on all the parameters $R_{1}$, $R_{2}$, $w$, and $\mathbf{t}$. However, we consider $t_{4}$ as the bifurcation parameter, and so special emphasis is put on this variable. The transformation $V_{t_{4}}$ to $Y_{t_{4}}$ is given by a conjugation by symplectic matrices $P_{t_{4}}$, i.e.\ $Y_{t_{4}} = P_{t_{4}}V_{t_{4}}P_{t_{4}}^{-1}$. This makes $t_{4} \to Y_{t_{4}}$ and $t_{4} \to A_{t_{4}}$ smoothly conjugate, and so we may compare their characteristic polynomials. If $\lambda$ denote an eigenvalue for $Y_{t_{4}}$ and $A_{t_{4}}$, then the characteristic polynomial is, respectively,
\begin{align} \label{eq:nu1nu2}
&p_{Y}(\lambda) = \lambda^{4} + a_{Y}(\nu_{1},\nu_{2}) \lambda^{2} + b_{Y}(\nu_{1},\nu_{2}) 
\quad \text{and} \quad
p_{A}(\lambda) = \lambda^{4} + a_{A} \lambda^{2} + b_{A},
\end{align}
for some suitably chosen functions $a_{k}, b_{k}$, $k \in \{Y,A\}$. Equating the coefficients for equal powers of $\lambda$ yields expressions for $\nu_{1}(t_{4})$ and $\nu_{2}(t_{4})$. There exists four different solutions to \eqref{eq:nu1nu2}, but only one giving $\nu_{1}(t_{4,m_{0}}^{+}) = 0$ and $\nu_{2}(t_{4,m_{0}}^{+}) = 0$, namely
\begin{align*}
\begin{dcases}
\nu_{1}(t_{4}) = - \alpha + \frac{(wt_{1}-t_{4})R_{2} + (wt_{2}+t_{4})R_{1}}{2R_{1}R_{2}}, \\
\nu_{2}(t_{4}) = - \frac{(wt_{1} - t_{4})^{2}R_{2}^{2} + (wt_{2} + t_{4})R_{1}^{2} - 2R_{1}R_{2}((wt_{1} - t_{4})(wt_{2} + t_{4}) + 2t_{3}^{2})}{4R_{1}^{2}R_{2}^{2}}.
\end{dcases}
\end{align*}
Furthermore, this solution to \eqref{eq:nu1nu2} yields
\begin{align*}
\frac{d\nu_{2}}{dt_{4}}\bigg|_{t_{4}=t_{4,m_{0}}^{+}} = - \frac{t_{3}(R_{1} + R_{2})}{(R_{1}R_{2})^{3/2}} \neq 0 \quad \text{if } t_{3} \neq 0,
\end{align*}
and so the unfolding given by $Y_{t_{4}}$ goes through a linear Hamiltonian Hopf bifurcation at $t_{4} = t_{4,m_{0}}^{+}$ if $t_{3} \neq 0$.
\end{proof}

\section{Non-linear normal form} \label{sec:nonlinear-HHB}

In this section we compute a normal form for $H_{w,\mathbf{t}}$ up to $6$th order. For this, we need to first find a coordinate transformation making the symplectic form standard. However, by the following lemma it suffices to transform $\omega$ in such a way that its Taylor series, apart from the constant term, vanishes up to $6$th order. This process is called \emph{flattening} of the symplectic form. Furthermore, we need the constant term in the flattened symplectic form to be standard.

\begin{lemma}[Efstathiou, Cushman and Sadovski{\'i} {\cite[Lemma 1]{Efstathiou2004}}] \label{lem:j-jet-Hamiltonian}
Consider a Hamiltonian $H = H^{2} + H^{3} + \cdots$ and a symplectic form $\omega = \omega^{0} + \omega^{j} + \cdots$, i.e.\ $\omega^{k} = 0$ for $1 \leq k \leq j-1$. Then the $j$-jet of the Hamiltonian vector field $X$ of $H$ with respect to $\omega$ is equal to the $j$-jet of the Hamiltonian vector field $Y$ of $H$ with respect to $\omega^{0}$.
\end{lemma}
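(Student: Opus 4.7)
The plan is to compare the Taylor expansions of $X$ and $Y$ order by order using the defining identity $\iota_X \omega = dH = \iota_Y \omega^0$. In local coordinates around the reference point, I would write $\omega = \omega^0 + \sum_{l \geq j}\omega^l$, $H = \sum_{m \geq 2}H^m$, and $X = \sum_{a \geq 0}X^a$, where $\omega^l$, $H^m$, and $X^a$ denote the respective homogeneous pieces of degree $l$, $m$, and $a$ in the coordinates. Since $dH^m$ is a $1$-form with coefficients of degree $m-1$, the hypothesis that $H$ has no constant or linear term means that $dH$ itself vanishes at the reference point.

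The next step is to extract the degree-$k$ part of $\iota_X\omega = dH$. Using that $\iota_{X^a}\omega^b$ is a $1$-form whose coefficients have degree $a+b$, the degree-$k$ identity reads
\begin{equation*}
\sum_{\substack{a + b = k \\ b \in \{0\}\cup\{j, j+1, \dots\}}} \iota_{X^a}\omega^b \;=\; dH^{k+1}.
\end{equation*}
For $0 \leq k < j$ only $b = 0$ can occur, so $\iota_{X^k}\omega^0 = dH^{k+1}$; the identical equation holds for $Y^k$, and the non-degeneracy of the constant form $\omega^0$ forces $X^k = Y^k$. In the borderline case $k = j$ the identity picks up one extra term, $\iota_{X^j}\omega^0 + \iota_{X^0}\omega^j = dH^{j+1}$, but the $k = 0$ instance of the previous observation gives $\iota_{X^0}\omega^0 = dH^1 = 0$, hence $X^0 = 0$. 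The $\omega^j$ contribution therefore drops out and once more $X^j = Y^j$.

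Combining the two cases yields $X^k = Y^k$ for every $0 \leq k \leq j$, which is precisely the claim that the $j$-jets of $X$ and $Y$ coincide. I do not expect a genuine obstacle; the argument is essentially careful bookkeeping in homogeneous degrees. The one substantive point is that the hypothesis ``$H$ starts at order $2$'' is exactly what is needed to kill $X^0$, and without it the correction $\omega^j$ would in general contaminate the $j$-jet of $X$ through the $\iota_{X^0}\omega^j$ term. This is the reason the flattening procedure in Section~\ref{sec:nonlinear-HHB} only needs to push the deviation of $\omega$ from $\omega^0$ beyond the order required to read off the normal form of the Hamiltonian.
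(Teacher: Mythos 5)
Your argument is correct and complete: the degree-by-degree bookkeeping, together with the observation that $H$ starting at order $2$ forces $X^{0}=0$ and thus kills the only possible contamination $\iota_{X^{0}}\omega^{j}$ at the borderline degree $k=j$, is exactly what is needed. Note that the paper itself gives no proof of this lemma — it is quoted from Efstathiou, Cushman and Sadovski{\'i} — so there is no in-paper argument to compare against; your proof is a valid self-contained substitute. It is also equivalent to the usual compressed version: writing $X=\omega^{-1}(dH)$ and $Y=(\omega^{0})^{-1}(dH)$ for the bundle maps inverse to contraction, the matrix of $\omega^{-1}-(\omega^{0})^{-1}$ has entries vanishing to order $j$ while $dH$ has entries vanishing to order $1$, so $X-Y=\mathcal{O}(\text{degree } j+1)$; your homogeneous-degree extraction is the same computation made explicit, with the added benefit of isolating precisely where the hypothesis on $H$ enters.
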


We introduce a small parameter $\varepsilon$. Let $(x,y) := (x_{1},y_{1},x_{2},y_{2})$ denote the Cartesian coordinates on $\mathcal{M}$. The coordinates $(q,p) := (q_{1},q_{2},p_{1},p_{2})$ defined by $(q,p)^{\intercal} = Q^{-1}(x,y)^{\intercal}$, where $Q$ is the matrix defined in \eqref{eq:Q}, is now replaced by $(\varepsilon q, \varepsilon p)$. Furthermore, we introduce the blown up symplectic form $\omega'(q,p) = \frac{1}{\varepsilon^{2}} \omega(\varepsilon q, \varepsilon p)$, which allows us to apply Proposition \ref{prop:flattening_of_sympl_form} below for the flattening of the symplectic form. Let us now consider its $6$-jet at $m_{0}$. After a simple calculation, we find that all non-vanishing terms of the $6$-jet are given by $\omega^{0}(q,p) + \varepsilon^{2} \omega^{2}(q,p) + \varepsilon^{4} \omega^{4}(q,p) + \varepsilon^{6} \omega^{6}(q,p)$. To write out each term, it is convenient to define
\begin{equation*}
\chi_{n}^{\pm} := \left( (p_{2}-q_{1})^{2} + (p_{1}+q_{2})^{2} \right)^{n/2} R_{1}^{n/2} \pm \left( (p_{2}+q_{1})^{2} + (p_{1}-q_{2})^{2} \right)^{n/2} R_{2}^{n/2}.
\end{equation*}
Then the non-vanishing terms of the $6$-jet of $\omega$ are
\begin{equation*}
\omega^{0}(q,p) = dq_{1} \wedge dp_{1} + dq_{2} \wedge dp_{2},
\end{equation*}
\begin{equation*}
\omega^{2}(q,p) = \frac{\chi_{2}^{-}(dq_{1} \wedge dq_{2} + dp_{1} \wedge dp_{2})}{8R_{1}R_{2}} + \frac{\chi_{2}^{+}(dq_{1} \wedge dp_{1} + dq_{2} \wedge dp_{2})}{8R_{1}R_{2}},
\end{equation*}
\begin{equation*}
\omega^{4}(q,p) = \frac{3\chi_{4}^{-}(dq_{1} \wedge dq_{2} + dp_{1} \wedge dp_{2})}{64R_{1}^{2}R_{2}^{2}} + \frac{3\chi_{4}^{+}(dq_{1} \wedge dp_{1} + dq_{2} \wedge dp_{2})}{64R_{1}^{2}R_{2}^{2}},
\end{equation*}
\begin{equation*}
\omega^{6}(q,p) = \frac{5\chi_{6}^{-}(dq_{1} \wedge dq_{2} + dp_{1} \wedge dp_{2})}{256R_{1}^{2}R_{3}^{3}} + \frac{5\chi_{6}^{+}(dq_{1} \wedge dp_{1} + dq_{2} \wedge dp_{2})}{256R_{1}^{3}R_{2}^{3}}.
\end{equation*}
By Lemma \ref{lem:j-jet-Hamiltonian}, it is sufficient to find coordinate transformations which map $\omega^{2}(q,p)$ and $\omega^{4}(q,p)$ to $0$. This is the content of the following proposition. Here $\mathcal{L}_{X}$ denotes the Lie derivative along a vector field $X$. Note that in the computation we are doing for the Gaudin models, we simply have to replace $\varepsilon$ by $\varepsilon^{2}$.

\begin{proposition} \label{prop:flattening_of_sympl_form}
Let $\beta = \beta^{0} + \varepsilon\beta^{1} + \varepsilon^{2}\beta^{2} + \mathcal{O}(\varepsilon^{3})$ be a formal power series of a closed $2$-form on $\R^{n}$ with $\beta^{0}$ a constant symplectic form. By the Poincaré lemma, there is a formal power series of a $1$-form $\alpha = \alpha^{0} + \varepsilon\alpha^{1} + \varepsilon^{2}\alpha^{2} + \mathcal{O}(\varepsilon^{3})$ such that $\beta = d\alpha$. Define two vector fields $X$ and $Y$ by $\iota_{X}\beta^{0} = -\alpha^{1}$ and $\iota_{Y}\beta^{0} = -\alpha^{2} - \frac{1}{2}\iota_{X}\beta^{1}$. Then changing coordinates by the time $\varepsilon$ map of the flow of $X$ and subsequently by the time $\varepsilon^{2}$ map of the flow of $Y$ flattens $\beta$ up to third order, i.e.\ $(\exp \varepsilon^{2} \mathcal{L}_{Y})^{*} \left( (\exp \varepsilon \mathcal{L}_{X})^{*} \beta \right) = \beta^{0} + \mathcal{O}(\varepsilon^{3})$.
\end{proposition}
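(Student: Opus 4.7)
The plan is a standard two-step Moser-style argument: I expand both pullbacks as formal Taylor series in $\varepsilon$ and use Cartan's magic formula, together with the fact that every $\beta^{k}$ is closed (which follows from $d\beta = 0$ applied term by term in $\varepsilon$), to kill the obstructions order by order.

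First I would expand
\[
(\exp \varepsilon \mathcal{L}_{X})^{*} \beta = \beta + \varepsilon \mathcal{L}_{X} \beta + \tfrac{\varepsilon^{2}}{2}\mathcal{L}_{X}^{2} \beta + \mathcal{O}(\varepsilon^{3}),
\]
substitute $\beta = \beta^{0} + \varepsilon \beta^{1} + \varepsilon^{2} \beta^{2} + \mathcal{O}(\varepsilon^{3})$, and rewrite Lie derivatives of closed forms as $\mathcal{L}_{X}\beta^{k} = d\iota_{X}\beta^{k}$. The defining relation $\iota_{X}\beta^{0} = -\alpha^{1}$ then gives $\mathcal{L}_{X}\beta^{0} = -d\alpha^{1} = -\beta^{1}$, so the $\varepsilon^{1}$-coefficient $\beta^{1} + \mathcal{L}_{X}\beta^{0}$ vanishes. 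The $\varepsilon^{2}$-coefficient equals
\[
\beta^{2} + \mathcal{L}_{X}\beta^{1} + \tfrac{1}{2}\mathcal{L}_{X}^{2}\beta^{0} = \beta^{2} + \tfrac{1}{2}\mathcal{L}_{X}\beta^{1},
\]
using $\mathcal{L}_{X}^{2}\beta^{0} = \mathcal{L}_{X}(-\beta^{1}) = -\mathcal{L}_{X}\beta^{1}$. Hence $(\exp \varepsilon \mathcal{L}_{X})^{*}\beta = \beta^{0} + \varepsilon^{2}\bigl(\beta^{2} + \tfrac{1}{2}\mathcal{L}_{X}\beta^{1}\bigr) + \mathcal{O}(\varepsilon^{3})$.

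Next I would apply $(\exp \varepsilon^{2} \mathcal{L}_{Y})^{*}$ to this expression. Because its non-constant part is already of order $\varepsilon^{2}$, only the first-order expansion $(\exp \varepsilon^{2} \mathcal{L}_{Y})^{*}\gamma = \gamma + \varepsilon^{2}\mathcal{L}_{Y}\gamma + \mathcal{O}(\varepsilon^{4})$ is needed, and the new $\varepsilon^{2}$-coefficient of the composed pullback becomes $\mathcal{L}_{Y}\beta^{0} + \beta^{2} + \tfrac{1}{2}\mathcal{L}_{X}\beta^{1}$. Using $\iota_{Y}\beta^{0} = -\alpha^{2} - \tfrac{1}{2}\iota_{X}\beta^{1}$ and Cartan's formula, together with $d\beta^{1} = 0$, this collapses to
\[
-d\alpha^{2} - \tfrac{1}{2}d\iota_{X}\beta^{1} + \beta^{2} + \tfrac{1}{2}\mathcal{L}_{X}\beta^{1} = -\beta^{2} - \tfrac{1}{2}\mathcal{L}_{X}\beta^{1} + \beta^{2} + \tfrac{1}{2}\mathcal{L}_{X}\beta^{1} = 0,
\]
which is the claimed identity.

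The argument is essentially bookkeeping; the only conceptual point, and the thing one must not bungle, is the design of $Y$. The correction term $-\tfrac{1}{2}\iota_{X}\beta^{1}$ in its defining equation is placed there precisely to absorb the second-order Taylor remainder $\tfrac{1}{2}\mathcal{L}_{X}^{2}\beta^{0}$ introduced by the first flow, which a vector field acting only against $\beta^{0}$ would otherwise not see. This is also what makes it clear that the construction extends: by introducing a further vector field with an analogous inhomogeneity at each successive order, one obtains the familiar Darboux--Moser flattening to arbitrary finite order, of which the statement is simply the truncation needed for Lemma \ref{lem:j-jet-Hamiltonian} at $j = 3$.
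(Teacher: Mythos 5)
Your argument is correct, and it reaches the conclusion by a slightly different route than the paper, so a brief comparison is worthwhile. The paper works at the level of the primitive: it pulls back the $1$-form $\alpha$ by the two flows, uses the Cartan formula and the defining relations of $X$ and $Y$ to show that $(\exp \varepsilon^{2}\mathcal{L}_{Y})^{*}(\exp \varepsilon\mathcal{L}_{X})^{*}\alpha$ equals $\alpha^{0}$ plus \emph{exact} terms through order $\varepsilon^{2}$, and then applies $d$ (using that exterior differentiation commutes with pullback) to conclude for $\beta$. You instead compute directly with the $2$-form: from $d\beta^{k}=0$ and $d\alpha^{k}=\beta^{k}$ you get $\mathcal{L}_{X}\beta^{0}=d\iota_{X}\beta^{0}=-\beta^{1}$, so the order-$\varepsilon$ coefficient vanishes identically and the order-$\varepsilon^{2}$ coefficient of the first pullback collapses to $\beta^{2}+\tfrac{1}{2}\mathcal{L}_{X}\beta^{1}$, which is then cancelled on the nose by $\mathcal{L}_{Y}\beta^{0}=-\beta^{2}-\tfrac{1}{2}d\iota_{X}\beta^{1}$. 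Both proofs hinge on the same Lie-series expansion, the same Cartan-formula manipulations, and the same observation that the term $-\tfrac{1}{2}\iota_{X}\beta^{1}$ in the definition of $Y$ is exactly what absorbs the second-order remainder generated by the first flow; your version is marginally cleaner in that the unwanted coefficients vanish exactly rather than only up to exact forms, while the paper's primitive-level computation follows Cushman and Bates verbatim and keeps the generating $1$-form explicit, which is convenient if one wants to iterate the construction to higher order as the paper's remark suggests.
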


\begin{proof}
The flattening up to second order was done by Cushman and Bates \cite[Section 8.2]{Cushman1997}. We recall their proof, to get an intuition how to proceed with the higher powers of $\varepsilon$. The flattening up to third order works exactly the same way.

Cushman and Bates \cite[Section 8.2]{Cushman1997} showed that for a differential form (in fact for any \emph{smooth geometric quantity on $\R^{n}$}), and a vector field $U$,
\begin{equation*}
(\exp \varepsilon \mathcal{L}_{U})^{*} T = \sum_{n \geq 0} \frac{\varepsilon^{n}}{n!} \mathcal{L}_{U}^{n} T.
\end{equation*}
Hence,
\begin{align*}
\widehat{\alpha} 
= &(\exp \varepsilon \mathcal{L}_{X})^{*} \alpha 
= \alpha^{0} + \varepsilon \left( \alpha^{1} + \mathcal{L}_{X}\alpha^{0} \right) + \varepsilon^{2} \left( \alpha^{2} + \mathcal{L}_{X}\alpha^{1} + \frac{1}{2} \mathcal{L}_{X}^{2}\alpha^{0} \right) + \mathcal{O}(\varepsilon^{3}),
\end{align*}
and
\begin{align*}
\widehat{\widehat{\alpha}}
= &(\exp \varepsilon^{2} \mathcal{L}_{Y})^{*} \widehat{\alpha} \\&
= \alpha^{0} + \varepsilon \left( \alpha^{1} + \mathcal{L}_{X}\alpha^{0} \right) +\varepsilon^{2} \left( \alpha^{2} + \mathcal{L}_{X}\alpha^{1} + \frac{1}{2} \mathcal{L}_{X}^{2}\alpha^{0} + \mathcal{L}_{Y}\alpha^{0} \right) + \mathcal{O}(\varepsilon^{3}).
\end{align*}
By the Cartan formula, and using the defining properties of $X$ and $Y$, we get
\begin{align*}
\widehat{\widehat{\alpha}}
= \alpha^{0} + \varepsilon d\iota_{X}\alpha^{0} + \varepsilon^{2} \left( d\iota_{X}\alpha^{1} + d\iota_{X}d\iota_{X}\alpha^{0} + d\iota_{Y}\alpha^{0} \right) + \mathcal{O}(\varepsilon^{3}).
\end{align*}
Thus,
\begin{align*}
(\exp \varepsilon^{2} \mathcal{L}_{Y})^{*} \left( (\exp \varepsilon \mathcal{L}_{X})^{*} \beta \right)
= d (\exp \varepsilon^{2} \mathcal{L}_{Y})^{*} \left( (\exp \varepsilon \mathcal{L}_{X})^{*} \alpha \right)
= d \widehat{\widehat{\alpha}}
= d\alpha_{0} + \mathcal{O}(\varepsilon^{3}),
\end{align*}
as desired.
\end{proof}

\begin{remark}
Note that Proposition \ref{prop:flattening_of_sympl_form} may be generalised such that one may flatten the symplectic form to any order.
\end{remark}

We are now ready to prove the following theorem.

\begin{theorem} \label{thm:nonlinearHHB}
Let the conditions necessary for focus-focus points in Lemma \ref{lem:rank0type} be met. The normal form of $H_{w,\mathbf{t}}$ at $m_{0}$ up to third order in the Hilbert generators $S$, $M$, and $N$ (from Section \ref{sec:HHB-theory}) is
\begin{equation} \label{eq:nonlinear-normal-form-Gaudin}
\widehat{H}_{w,\mathbf{t}} = a_{1}S + N + a_{2}M + a_{3}M^{2} + a_{4}MS + a_{5}S^{2} + a_{6}M^{3} + a_{7}M^{2}S + a_{8}MS^{2} + a_{9}S^{3},
\end{equation}
where the $a_{i}$'s are polynomials in $t_{0},t_{1},t_{2},t_{3},t_{4}$ and $w$. In particular, 
\begin{align*}
a_{2}|_{t_{4} = t_{4,m_{0}}^{+}} = 0 \quad \text{and} \quad \frac{\partial a_{2}}{\partial t_{4}}|_{t_{4} = t_{4,m_{0}}^{+}} \neq 0,
\end{align*}
so $\widehat{H}_{w,\mathbf{t}}$ is a normal form describing a Hamiltonian Hopf bifurcation. Similarly, if $M$ and $N$ change roles, i.e.\ $M = \frac{1}{2}(p_{1}^{2} + p_{2}^{2})$ and $N = \frac{1}{2}(q_{1}^{2} + q_{2}^{2})$, then $a_{2}|_{t_{4} = t_{4,m_{0}}^{-}} = 0$ and $\frac{\partial a_{2}}{\partial t_{4}}|_{t_{4} = t_{4,m_{0}}^{-}} \neq 0$. The same is also true if we change $m_{0}$ with $m_{2}$.
\end{theorem}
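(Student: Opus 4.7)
The plan is to carry out Van der Meer's normalisation recipe (Section \ref{sec:HHB-theory}) to sixth order in canonical coordinates, building directly on the linear analysis already performed in the proof of Theorem \ref{thm:linearHHB}. First, I would start from the symplectic coordinate change $P_{t_{4,m_{0}}^{+}}$ produced in Step 2 of that proof, composed with a $q \leftrightarrow p$ swap so that the nilpotent direction of the linearised Hamiltonian vector field of $H_{w,\mathbf{t}}^{2}$ corresponds to $N$ rather than $M$ (matching the convention of \eqref{eq:HHB-nonlin-normal-form-def}). Then I would use Proposition \ref{prop:flattening_of_sympl_form} iteratively, with $\varepsilon$ replaced by $\varepsilon^{2}$, to kill $\omega^{2}$, $\omega^{4}$, and $\omega^{6}$ in the blown-up expansion of $\omega$, so that by Lemma \ref{lem:j-jet-Hamiltonian} we may compute the dynamics up to sixth order using the standard symplectic form $\omega^{0}=dq_{1}\wedge dp_{1}+dq_{2}\wedge dp_{2}$. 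All of these preliminary transformations are the identity plus higher-order corrections, so the quadratic part of $H_{w,\mathbf{t}}$ is unchanged: after the swap it equals the unfolding $(\alpha + \nu_{1}(t_{4}))S + N - \nu_{2}(t_{4})M$ obtained in Step 3 of Theorem \ref{thm:linearHHB}.

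Second, I would Taylor expand $H_{w,\mathbf{t}}$ up to sixth order in the Darboux coordinates and apply the Lie series construction \eqref{eq:transf-theory} order by order: at each $k \in \{3,4,5,6\}$ one picks a homogeneous generating polynomial $G^{k}$ of degree $k$ solving the homological equation $\ad_{H^{2}}G^{k} = \tilde{H}^{k}$, where $\tilde{H}^{k}$ is the projection of the current $k$-th order term onto $\im \ad_{H^{2}}$. By the bracket relations \eqref{eq:bracket-relations} and Definition \ref{def:HHB-nonlinear}, the complement of $\im \ad_{H^{2}}$ can be taken to consist of polynomials in the Hilbert generators $M$ and $S$ (after using $4MN = S^{2}+T^{2}$ to eliminate $T$). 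Collecting the survivors produces exactly the ansatz \eqref{eq:nonlinear-normal-form-Gaudin}, with coefficients $a_{i}$ polynomial in $(R_{1},R_{2},w,t_{0},t_{1},t_{2},t_{3},t_{4})$. Because every $G^{k}$ has degree $\geq 3$, the Lie series leaves the quadratic part untouched, whence $a_{1} = \alpha + \nu_{1}(t_{4})$ and $a_{2}(t_{4}) = -\nu_{2}(t_{4})$; the explicit formula for $\nu_{2}$ derived in Theorem \ref{thm:linearHHB} then immediately yields $a_{2}|_{t_{4}=t_{4,m_{0}}^{+}} = 0$ and $\partial a_{2}/\partial t_{4}|_{t_{4}=t_{4,m_{0}}^{+}} = t_{3}(R_{1}+R_{2})/(R_{1}R_{2})^{3/2}\neq 0$ under the standing assumption $t_{3}\neq 0$. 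The three remaining bifurcation points reduce to this one: the case $t_{4}=t_{4,m_{0}}^{-}$ is handled by the $q \leftrightarrow p$ swap flagged in the remark following \eqref{eq:HHB-nonlin-normal-form-def}, and the two bifurcations at $m_{2}$ by repeating the entire computation at $m_{2}=(0,0,-1,0,0,1)$ in place of $m_{0}$.

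I expect the only genuine obstacle to be computational bulk. Each homological equation $\ad_{H^{2}}G^{k}=\tilde{H}^{k}$ is finite-dimensional and solvable by linear algebra once the complement is fixed, but the resulting symbolic expressions in $(R_{1},R_{2},w,t_{0},\dots,t_{4})$ grow rapidly through the composition of the four Lie series and through the rewriting of coordinate monomials in terms of $S,M,N,T$ modulo $4MN=S^{2}+T^{2}$. The calculation is therefore best delegated to a computer algebra system, which also explains why the explicit formulas for $a_{1},\dots,a_{9}$ are collected in Appendix \ref{sec:appendix-thm-coeffs} rather than written out inline.
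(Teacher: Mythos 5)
Your proposal is correct in outline and follows essentially the same route as the paper: flatten the symplectic form with Proposition \ref{prop:flattening_of_sympl_form} and Lemma \ref{lem:j-jet-Hamiltonian}, remove the terms in $\im(\ad_{H^{2}})$ by Lie series within the algebra of Hilbert generators, and read the bifurcation condition off the quadratic part. Two differences and one delicate step are worth noting. First, the paper's linear change of coordinates is the $t_{4}$-independent matrix $Q$ of \eqref{eq:Q}, chosen so that the quadratic part of $J$ becomes a multiple of $S$; this is what guarantees that every transformed term is a polynomial in $S,M,N,T$, and you would need to verify the analogous property for $P_{t_{4,m_{0}}^{+}}$ composed with your swap. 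Second, the paper normalises only the quartic and sextic terms, using generators $E$ and $F$ that are themselves invariant polynomials (multiples of $T$), rather than solving a homological equation at each degree $k=3,\dots,6$; the odd orders vanish automatically after the blow-up in $\varepsilon^{2}$. The step that needs care is your identification $a_{2}=-\nu_{2}$: the unfolding $(\alpha+\nu_{1})S+N-\nu_{2}M$ in Step 3 of Theorem \ref{thm:linearHHB} is reached only after a further $t_{4}$-dependent conjugation, so in your fixed coordinates the quadratic part for $t_{4}\neq t_{4,m_{0}}^{+}$ will in general carry a $T$-term and an $N$-coefficient different from $1$. The paper instead computes the coefficient of $M$ directly, obtaining $\tilde a_{2}=-\bigl(w(R_{2}t_{1}-R_{1}t_{2})+2\sqrt{R_{1}R_{2}}\,t_{3}-(R_{1}+R_{2})t_{4}\bigr)/(2R_{1}R_{2})$, which is linear in $t_{4}$ and hence not equal to $-\nu_{2}$, and then rescales by $b^{-1}$ to normalise the $N$-coefficient. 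Since $a_{2}$ and $-\nu_{2}$ differ by a smooth nonvanishing factor near the bifurcation value, your conclusions that $a_{2}$ vanishes at $t_{4,m_{0}}^{\pm}$ with nonzero $t_{4}$-derivative survive, but the literal equality (and the specific value of the derivative you quote) does not.
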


The coefficients $a_{i}$ in \eqref{eq:nonlinear-normal-form-Gaudin} are rather large, and therefore presented only in Appendix \ref{sec:appendix-thm-coeffs}. Note that the coefficients in the theorem are in fact scaled versions of those shown in the appendix. The scaling is explained at the very end of the proof of the theorem; it concerns the fact that the coefficient of $N$ in \eqref{eq:nonlinear-normal-form-Gaudin} should be $\pm 1$.

Note that in Section \ref{sec:momentum_map} we investigate $a_{3}$ and $a_{6}$ further, and those are therefore presented there, although only for $t_{4} = t_{4,m_{0}}^{+}$. 

\begin{proof}[Proof of Theorem \ref{thm:nonlinearHHB}]
Just as in the proof of Theorem \ref{thm:linearHHB}, we follow the approach of Cushamn and Bates \cite[Section 8.2]{Cushman1997}, used to find a nonlinear normal form for the Hamiltonian Hopf bifurcation of the Lagrange top. We also use the same parameterisation we used in the proof of Theorem \ref{thm:linearHHB}.

The proof is done in $2$ steps. In the first step, we flatten the symplectic form up to sixth order. In the second step, we apply the same coordinate transformation we used to flatten $\omega$ on $H_{w,\mathbf{t}}$. Finally, we find a transformation cancelling the appearances of unwanted Hilbert generators in the transformed $H_{w,\mathbf{t}}$. However, we begin with a preliminary step.

\textit{Step 0:} We aim to find a coordinate transformation $Q : \R^{4} \to \R^{4}$, which we use to define new coordinates $(q_{1},q_{2},p_{1},p_{2})$ by $(x_{1},y_{1},x_{2},y_{2})^{\intercal} = Q(q_{1},q_{2},p_{1},p_{2})^{\intercal}$. The transformation should simultaneously 
\begin{enumerate}[(i)]
\item map the symplectic form to a symplectic form which at the singularity is standard, i.e.\ $Q^{*}\omega|_{m_{0}} = \omega_{0}$, and
\item map the degree two part of the Taylor series of $J$, inducing the $\mathbb{S}^{1}$-action, to a multiple of the Hilbert generator $S = q_{1}p_{2} - q_{2}p_{1}$ (as described in van der Meer \cite[Chapter 3]{vanderMeer1985}).
\end{enumerate}
It turns out that the coordinate transformation defined in \eqref{eq:Q} does the job:
\begin{align*}
\begin{pmatrix}
x_{1} \\ y_{1} \\ x_{2} \\ y_{2}
\end{pmatrix}
=
Q
\begin{pmatrix}
q_{1} \\ q_{2} \\ p_{1} \\ p_{2}
\end{pmatrix}
= \frac{1}{\sqrt{2}}
\begin{pmatrix}
(q_{1} + p_{2})/\sqrt{R_{1}} \\
(q_{2} - p_{1})/\sqrt{R_{1}} \\
(- q_{1} + p_{2})/\sqrt{R_{2}} \\
(q_{2} + p_{1})/\sqrt{R_{2}}
\end{pmatrix}.
\end{align*}

We define
\begin{align*}
&\xi := Q^{*}z_{1}(x_{1},y_{1}) = \sqrt{1 - \frac{1}{2R_{1}} \left( (p_{2} + q_{1})^{2} + (p_{1} - q_{2})^{2} \right)}, \\ 
&\eta := Q^{*}z_{2}(x_{2},y_{2}) = - \sqrt{1 - \frac{1}{2R_{2}} \left( (p_{2} - q_{1})^{2} + (p_{1} + q_{2})^{2} \right)}.
\end{align*}
Then, in the $(q_{1},q_{2},p_{1},p_{2})$-coordinates, we have
\begin{align*}
&\check{\omega} := Q^{*}\omega = - \left(\frac{1}{2\xi} + \frac{1}{2\eta}\right)\left(dq_{1} \wedge dq_{2} + dp_{1} \wedge dp_{2}\right) \\&\qquad\qquad\qquad+ \left(\frac{1}{2\xi} - \frac{1}{2\eta}\right)\left(dq_{1} \wedge dp_{1} + dq_{2} \wedge dp_{2}\right), \\
&\check{J} := Q^{*}J = R_{1}\xi + R_{2}\eta, \\
&\check{H}_{w,\mathbf{t}} := Q^{*}H = t_{0} (\xi + \eta)^{2} + w(t_{1}\xi + t_{2}\eta) + t_{3} \frac{q_{1}^{2} + q_{2}^{2} + p_{1}^{2} + p_{2}^{2}}{2\sqrt{R_{1}R_{2}}} + t_{4}\xi\eta.
\end{align*}
Note that at $m_{0}$, the new coordinates give us $\xi = 1$ and $\eta = -1$, which yields $\check{\omega}|_{m_{0}} = \omega_{0}$.

\textit{Step 1:} Let us introduce a small parameter $\varepsilon$, and replace the coordinates $(q,p)$ by $(\varepsilon q,\varepsilon p)$. Then the blown up symplectic form $\check{\omega}'(q,p) = \frac{1}{\varepsilon^{2}} \check{\omega}(\varepsilon q,\varepsilon p)$ and the blown up Hamiltonian $\check{H}_{w,\mathbf{t}}'(q,p) = \frac{1}{\varepsilon^{2}} \check{H}_{w,\mathbf{t}}(\varepsilon q,\varepsilon p)$ have the respective $6$-jets (where the constant term is dropped, as it does not affect the dynamics of the system, and we immediately drop the prime, simplifying the notation): 
\begin{align*}
&\check{\omega} = \check{\omega}^{0} + \varepsilon^{2} \check{\omega}^{2} + \varepsilon^{4} \check{\omega}^{4} + \varepsilon^{6} \check{\omega}^{6} + \mathcal{O}(\varepsilon^{8}), \quad 
\check{H}_{w,\mathbf{t}} = \check{H}_{w,\mathbf{t}}^{2} + \varepsilon^{2} \check{H}_{w,\mathbf{t}}^{4} + \varepsilon^{4} \check{H}_{w,\mathbf{t}}^{6} + \mathcal{O}(\varepsilon^{6}).
\end{align*}
We want to flatten the form, which we do using Proposition \ref{prop:flattening_of_sympl_form}. By Lemma \ref{lem:j-jet-Hamiltonian}, we only need to find coordinate transformations which make $\check{\omega}^{2}$ and $\check{\omega}^{4}$ vanish, and so we need to find primitives of $\check{\omega}^{2}$ and $\check{\omega}^{4}$. This we can do using the constructive Poincaré lemma given by Cushman and Bates \cite[p. 263]{Cushman1997}; they are, respectively, $\alpha^{2} = - \frac{1}{4}\iota_{A}\check{\omega}^{2}$ and $\alpha^{4} = - \frac{1}{6}\iota_{A}\check{\omega}^{4}$, where $A = - q_{1} \frac{\partial}{\partial q_{1}} - q_{2} \frac{\partial}{\partial q_{2}} - p_{1} \frac{\partial}{\partial p_{1}} - p_{2} \frac{\partial}{\partial p_{2}}$. Now a computation determines the vector fields from Proposition \ref{prop:flattening_of_sympl_form}. Let $\zeta_{\pm} = (p_{2} \pm q_{1})^{2} + (p_{1} \mp q_{2})^{2}$. Then we find
\begin{align*}
32X = &
- \left( \frac{p_{2} + q_{1}}{R_{1}} \zeta_{+} - \frac{p_{2} - q_{1}}{R_{2}} \zeta_{-} \right) \frac{\partial}{\partial q_{1}}
+ \left( \frac{p_{1} - q_{2}}{R_{1}} \zeta_{+} - \frac{p_{1} + q_{2}}{R_{2}} \zeta_{-} \right) \frac{\partial}{\partial q_{2}} \\&
- \left( \frac{p_{1} - q_{2}}{R_{1}} \zeta_{+} + \frac{p_{1} + q_{2}}{R_{2}} \zeta_{-} \right) \frac{\partial}{\partial p_{1}}
- \left( \frac{p_{2} + q_{1}}{R_{1}} \zeta_{+} + \frac{p_{2} - q_{1}}{R_{2}} \zeta_{-} \right) \frac{\partial}{\partial p_{2}},
\end{align*}
\begin{align*}
256Y = &
- \left( \frac{p_{2} + q_{1}}{R_{1}} \zeta_{+}^{2} - \frac{p_{2} - q_{1}}{R_{2}} \zeta_{-}^{2} \right) \frac{\partial}{\partial q_{1}}
+ \left( \frac{p_{1} - q_{2}}{R_{1}} \zeta_{+}^{2} - \frac{p_{1} + q_{2}}{R_{2}} \zeta_{-}^{2} \right) \frac{\partial}{\partial q_{2}} \\&
- \left( \frac{p_{1} - q_{2}}{R_{1}} \zeta_{+}^{2} + \frac{p_{1} + q_{2}}{R_{2}} \zeta_{-}^{2} \right) \frac{\partial}{\partial p_{1}}
- \left( \frac{p_{2} + q_{1}}{R_{1}} \zeta_{+}^{2} + \frac{p_{2} - q_{1}}{R_{2}} \zeta_{-}^{2} \right) \frac{\partial}{\partial p_{2}}.
\end{align*}
Pulling back $\check{\omega}$ by $\exp(\varepsilon^{2} \mathcal{L}_{X})$ and by $\exp(\varepsilon^{4} \mathcal{L}_{Y})$ then flattens it up to $6$th order. 

\textit{Step 2:} Now we apply the same transformations to $\check{H}$:
\begin{align*}
\bar{H} _{w,\mathbf{t}}
:= (\exp \varepsilon^{4} \mathcal{L}_{Y})^{*}(\exp \varepsilon^{2} \mathcal{L}_{X})^{*} \check{H}_{w,\mathbf{t}}
= \bar{H}_{w,\mathbf{t}}^{2} + \varepsilon^{2} \bar{H}_{w,\mathbf{t}}^{4} + \varepsilon^{4} \bar{H}_{w,\mathbf{t}}^{6} + \mathcal{O}(\varepsilon^{6}),
\end{align*}
where
\begin{align*}
&\bar{H}_{w,\mathbf{t}}^{2} := \check{H}_{w,\mathbf{t}}^{2}, \quad
\bar{H}_{w,\mathbf{t}}^{4} := \check{H}_{w,\mathbf{t}}^{4} + \mathcal{L}_{X} \check{H}_{w,\mathbf{t}}^{2}, \\
&\bar{H}_{w,\mathbf{t}}^{6} := \check{H}_{w,\mathbf{t}}^{6} + \mathcal{L}_{X} \check{H}_{w,\mathbf{t}}^{4} + \frac{1}{2} \mathcal{L}_{X}^{2} \check{H}_{w,\mathbf{t}}^{2} + \mathcal{L}_{Y} \check{H}_{w,\mathbf{t}}^{2}.
\end{align*}
It turns out that $\bar{H}_{w,\mathbf{t}}^{i}$, $i \in \{2,4,6\}$ are polynomial functions in the Hilbert generators
\begin{align*}
S = q_{1}p_{2} - q_{2}p_{1}, \quad
M = \frac{1}{2} (q_{1}^{2} + q_{2}^{2}), \quad
N = \frac{1}{2} (p_{1}^{2} + p_{2}^{2}), \quad 
T = q_{1}p_{1} + q_{2}p_{2},
\end{align*}
where the coefficients are polynomial functions in $t_{j}$, $j \in \{0,1,2,3,4\}$. In fact,
\begin{align*}
\bar{H}_{w,\mathbf{t}}^{2} = &
- \frac{w(R_{2}t_{1} + R_{1}t_{2}) + (R_{1}-R_{2})t_{4}}{2R_{1}R_{2}}S \\&\quad
- \frac{w(R_{2}t_{1} - R_{1}t_{2}) - 2\sqrt{R_{1}R_{2}}t_{3} - (R_{1}+R_{2})t_{4}}{2R_{1}R_{2}}N \\&\quad
- \frac{w(R_{2}t_{1} - R_{1}t_{2}) + 2\sqrt{R_{1}R_{2}}t_{3} - (R_{1}+R_{2})t_{4}}{2R_{1}R_{2}}M,
\end{align*}
and the term multiplying $M$ vanishes for $t_{4} = t_{4,m_{0}}^{+}$. For $\bar{H}_{w,\mathbf{t}}^{4}$ and $\bar{H}_{w,\mathbf{t}}^{6}$, however, there are terms multiplying $N$ and $T$, which should not be present in the normal form. After removing the $MN$ term by the equality $4MN = S^{2} + T^{2}$, we get rid of the remaining unwanted terms by another coordinate transformation. We pull back by $\exp(\varepsilon^{2} \ad_{E})$ and by $\exp(\varepsilon^{4} \ad_{F})$, where $E = e_{1} MT + e_{2} NT + e_{3} ST$ and $F = f_{1} M^{2}T + f_{2} MST + f_{3} N^{2}T + f_{4} NST + f_{5} S^{2}T + f_{6} T^{3}$. The coefficients $e_{i}$ and $f_{j}$ are determined such that, after the transformation
\begin{align*}
\widehat{H}_{w,\mathbf{t}} := &
(\exp \varepsilon^{4} \ad_{F})^{*} \left( (\exp \varepsilon^{2} \ad_{E})^{*} \bar{H}_{w,\mathbf{t}} \right) \\ = &
\bar{H}_{w,\mathbf{t}}^{2} + \varepsilon^{2} \left( \bar{H}_{w,\mathbf{t}}^{4} + \ad_{E} \bar{H}_{w,\mathbf{t}}^{2} \right) + \varepsilon^{4} \left( \bar{H}_{w,\mathbf{t}}^{6} + \ad_{E} \bar{H}_{w,\mathbf{t}}^{4} + \frac{1}{2} \ad_{E}^{2} \bar{H}_{w,\mathbf{t}}^{2} + \ad_{F} \bar{H}_{w,\mathbf{t}}^{2} \right),
\end{align*}
the terms multiplying $\varepsilon^{2}$ and $\varepsilon^{4}$ are void of $N$ and $T$ terms. Recall that $\ad_{G}H = \{G,H\}$ for functions $G,H$. Thus, this becomes
\begin{align*}
\widehat{H}_{w,\mathbf{t}} := &
\bar{H}_{w,\mathbf{t}}^{2} + \varepsilon^{2} \left( \bar{H}_{w,\mathbf{t}}^{4} + \{E,\bar{H}_{w,\mathbf{t}}^{2}\} \right) \\&+ \varepsilon^{4} \left( \bar{H}_{w,\mathbf{t}}^{6} + \{E,\bar{H}_{w,\mathbf{t}}^{4}\} + \frac{1}{2} \{E,\{E,\bar{H}_{w,\mathbf{t}}^{2}\}\} + \{F, \bar{H}_{w,\mathbf{t}}^{2}\} \right).
\end{align*}
Using the Leibniz formula $\{G_{1}G_{2},H\} = G_{1}\{G_{2},H\} + G_{2}\{G_{1},H\}$, yields the coefficients in Appendix \ref{sec:appendix-EF-coeffs}. This, in turn, gives us the following Hamiltonian,
\begin{equation*}
\widehat{H}_{w,\mathbf{t}} = \tilde{a}_{1}S + bN + \tilde{a}_{2}M + \tilde{a}_{3}M^{2} + \tilde{a}_{4}MS + \tilde{a}_{5}S^{2} + \tilde{a}_{6}M^{3} + \tilde{a}_{7}M^{2}S + \tilde{a}_{8}MS^{2} + \tilde{a}_{9}S^{3},
\end{equation*}
where the coefficients $\tilde{a}_{i}$ and $b$ are given in Appendix \ref{sec:appendix-thm-coeffs} (and $\tilde{a}_{1}$, $b$, and $\tilde{a}_{2}$ are also given in the expression for $\bar{H}_{w,\mathbf{t}}^{2}$). The coefficient of $N$ in the normal form \eqref{eq:HHB-nonlin-normal-form-def} should be either $1$ or $-1$. By scaling the Hamiltonian by $b^{-1}$, i.e.\ by multiplying $b^{-1}\tilde{a}_{i}$, for $i \in \{1,\dots,9\}$, we get the coefficient of $N$ to be $1$. 
\if We obtain the desired coefficient by a symplectic coordinate change: for $j \in \{1,2\}$, $p_{j}^{2} \to p_{j}^{2}/\tilde{a}_{2}$, such that $N = \frac{1}{2}(p_{1}^{2} + p_{2}^{2}) \to N/a_{2}$. To keep the coordinates canonical, we similarly must take $q_{j}^{2} \to q_{j}^{2}/\tilde{a}_{2}$, such that $M = \frac{1}{2}(q_{1}^{2} + q_{2}^{2}) \to \tilde{a}_{2}M$. The remaining Hilbert generator, $S$, does not change, as we have $q$-coordinates multiplying $p$-coordinates. Hence, we also get $M^{2} \to \tilde{a}_{2}^{2}M^{2}$, $MS \to \tilde{a}_{2}MS$, and so on. \fi 
This yields the normal form from the statement of the theorem.
\end{proof}

\section{Analysis of the momentum map} \label{sec:momentum_map}

In this section we study the dynamics and geometry of the momentum map by analysing the non-linear normal form for $H_{w,\mathbf{t}}$ that we found in Theorem \ref{thm:nonlinearHHB}. The corollaries \ref{cor:subcritical-HHB-rational}, \ref{cor:subcritical-HHB-trig}, \ref{cor:degenerate-HHB-rat}, and \ref{cor:degenerate-HHB-trig} follow immediately from the coefficients in \eqref{eq:nonlinear-normal-form-Gaudin}. We only give the results for the bifurcation at $t_{4} = t_{4,m_{0}}^{+}$, but similar results hold for the other bifurcation values $t_{4} = t_{4,m_{0}}^{-}$ and $t_{4} = t_{4,m_{2}}^{\pm}$.

Note that at $t_{4} = t_{4,m_{0}}^{+}$, we have
\begin{align*}
a_{3} &= \frac{2 \left(R_1-R_2\right){}^2 \left(R_1+R_2\right) t_0+2 R_1 R_2 w \left(R_1 t_2-R_2 t_1\right)+(R_{1}-R_{2})^{2}\sqrt{R_{1}R_{2}}t_{3}}{8 \left(R_1 R_2\right){}^{3/2} \left(R_1+R_2\right) t_3}.
\end{align*}
If $a_{3} > 0$, then the bifurcation is supercritical, and if $a_{3} < 0$ it is subcritical. The rational Gaudin model, given by $t_{0} = 0$ and $w = 1$, goes through a subcritical Hamiltonian Hopf bifurcation for the following requirements on $R_{1}$, $R_{2}$, $t_{1}$, $t_{2}$, and $t_{3}$:

\begin{corollary} \label{cor:subcritical-HHB-rational}
The point $m_{0}$ in the rational Gaudin model, with Hamiltonian given by
\begin{equation*}
H_{1,\mathbf{t}} = t_{1}z_{1} + t_{2}z_{2} + t_{3}(x_{1}x_{2} + y_{1}y_{2}) + t_{4}z_{1}z_{2},    
\end{equation*}
goes through a subcritical Hamiltonian Hopf bifurcation at $t_{4} = t_{4,m_{0}}^{+}$ if either
\begin{enumerate}[(i)]
    \item $R_{1} = R_{2}$, and either $t_{1} < t_{2}$ and $t_{3} < 0$, or $t_{1} > t_{2}$ and $t_{3} > 0$, or
    \item $(R_{1}-R_{2})^{2}t_{3} < 2\sqrt{R_{1}R_{2}}(R_{1}t_{2} - R_{2}t_{1})$ and $t_{3} < 0$, or
    \item $(R_{1}-R_{2})^{2}t_{3} < -2\sqrt{R_{1}R_{2}}(R_{1}t_{2} - R_{2}t_{1})$ and $t_{3} > 0$.
\end{enumerate}
\end{corollary}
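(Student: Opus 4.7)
The plan is to read off the conclusion directly from the explicit formula for $a_3$ at $t_4 = t_{4,m_0}^{+}$ displayed immediately before the corollary, by specialising it to the parameters of the rational Gaudin model and then performing a sign analysis. Recall from Theorem \ref{thm:nonlinearHHB}, together with the characterisation of sub- and supercritical Hamiltonian Hopf bifurcations in Section \ref{sec:HHB-theory}, that once $H_{w,\mathbf{t}}$ has been brought into the normal form with the coefficient of $N$ scaled to $+1$ (so $\sigma = +1$), the bifurcation at $m_0$ is subcritical precisely when $a_3 < 0$, as is spelled out in the paragraph preceding the corollary.

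Substituting $t_0 = 0$ and $w = 1$ in the displayed expression and cancelling a factor of $\sqrt{R_1 R_2}$ from numerator and denominator yields
\begin{equation*}
a_3 \;=\; \frac{2\sqrt{R_1 R_2}\,(R_1 t_2 - R_2 t_1) \,+\, (R_1 - R_2)^{2}\, t_3}{8\, R_1 R_2\,(R_1+R_2)\, t_3}.
\end{equation*}
Since $R_1, R_2, R_1 + R_2 > 0$, the denominator has the same sign as $t_3$, so $a_3 < 0$ is equivalent to the numerator having the opposite sign to $t_3$.

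The remainder of the proof is a clean case split. In the regime $R_1 = R_2$ the term $(R_1 - R_2)^{2} t_3$ vanishes and the numerator collapses to $2R_1^{2}(t_2 - t_1)$; asking this to have sign opposite to $t_3$ produces exactly the two alternatives in item (i). For $R_1 \neq R_2$ I would split on the sign of $t_3$: the subcase $t_3 > 0$ demands that the numerator be negative, which rearranges directly to the inequality in item (iii), while the subcase $t_3 < 0$ demands that the numerator be positive, which rearranges to the inequality in item (ii).

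There is no real obstacle in this argument --- it is pure bookkeeping on a quantity that is linear in $t_3$ --- but two consistency checks are worth carrying out before closing. First, one should verify that the scaling by $b^{-1}$ performed at the end of the proof of Theorem \ref{thm:nonlinearHHB} indeed produces $\sigma = +1$, so that it is the sign of $a_3$ itself, and not the sign of $\sigma a_3$, that governs sub- versus supercriticality. Second, one should check that the parameter ranges appearing in (i)--(iii) are compatible with the hypotheses of Lemma \ref{lem:rank0type} that place $m_0$ in the focus-focus regime on one side of $t_{4,m_0}^{+}$, so that $t_4 = t_{4,m_0}^{+}$ is genuinely a Hamiltonian Hopf bifurcation value rather than a transition inside the elliptic-elliptic regime.
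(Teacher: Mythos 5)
Your route is the paper's route: the paper obtains Corollary \ref{cor:subcritical-HHB-rational} directly from the expression for $a_{3}$ at $t_{4}=t_{4,m_{0}}^{+}$ displayed in Section \ref{sec:momentum_map}, using the criterion that the bifurcation is subcritical precisely when $a_{3}<0$, and your specialisation $t_{0}=0$, $w=1$, the cancellation of $\sqrt{R_{1}R_{2}}$, and the observation that the denominator has the sign of $t_{3}$ are exactly the intended bookkeeping. Your treatment of case (i) and of the subcase $t_{3}>0$ (item (iii)) is correct. Also, your two postponed ``consistency checks'' are quickly settled: at $t_{4}=t_{4,m_{0}}^{+}$ one has $b=2t_{3}/\sqrt{R_{1}R_{2}}$, so for $t_{3}<0$ the scaling is by a negative constant, but $\operatorname{sign}(\sigma a)$ is unchanged under rescaling of the Hamiltonian, which is what justifies reading subcriticality off $a_{3}<0$; and the focus-focus hypotheses of Lemma \ref{lem:rank0type} are indeed an implicit standing assumption, since (i)--(iii) do not imply them.

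The genuine problem is your claim that in the subcase $t_{3}<0$ the requirement ``numerator positive'' \emph{rearranges to the inequality in item (ii)}. It does not. Positivity of
\begin{equation*}
2\sqrt{R_{1}R_{2}}\,(R_{1}t_{2}-R_{2}t_{1})+(R_{1}-R_{2})^{2}t_{3}
\end{equation*}
rearranges to $(R_{1}-R_{2})^{2}t_{3}>-2\sqrt{R_{1}R_{2}}(R_{1}t_{2}-R_{2}t_{1})$, i.e.\ (using $t_{3}<0$) to $(R_{1}-R_{2})^{2}\abs{t_{3}}<2\sqrt{R_{1}R_{2}}(R_{1}t_{2}-R_{2}t_{1})$, which is strictly stronger than the printed condition $(R_{1}-R_{2})^{2}t_{3}<2\sqrt{R_{1}R_{2}}(R_{1}t_{2}-R_{2}t_{1})$ whenever $R_{1}\neq R_{2}$, because the printed left-hand side is negative. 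Concretely, $R_{1}=1$, $R_{2}=4$, $t_{1}=-0.1$, $t_{2}=0.1$, $t_{3}=-10$ satisfies the printed (ii) and the focus-focus condition $(R_{1}-R_{2})t_{3}>\abs{w(t_{1}+t_{2})\sqrt{R_{1}R_{2}}}$, yet the numerator equals $2-90<0$, so $a_{3}>0$ and the bifurcation is supercritical. So the sign analysis does not deliver item (ii) as printed; it delivers the corrected inequality above (the printed (ii) — and likewise the trigonometric Corollary \ref{cor:subcritical-HHB-trig}, which is contradicted by the paper's own example in Figure \ref{fig:trig-col} — appears to carry a sign slip relative to the displayed $a_{3}$). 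Your proof should either derive and state the corrected inequality or flag the discrepancy, rather than asserting that the algebra matches the statement; as written, that step fails.
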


One can determine the requirements for the bifurcation to be supercritical by considering the cases not mentioned in the corollary. There is however also the case $a_{3} = 0$ which is neither supercritical nor subcritical; the bifurcation is degenerate in that case, and dealt with in Corollary \ref{cor:degenerate-HHB-rat}.

Figure \ref{fig:rat-subcrit} shows what Corollary \ref{cor:subcritical-HHB-rational} means for the image of the momentum map. In the figure one can see $m_{0}$ going through a subcritical Hamiltonian Hopf bifurcation at $t_{4} = t_{4,m_{0}}^{+}$ as it satisfies the criteria \textit{(i)} of Corollary \ref{cor:subcritical-HHB-rational}. Likewise we could have written conditions for $m_{2}$, for which $a_{3} > 0$, which would tell us that the bifurcation $m_{2}$ goes through in Figure \ref{fig:rat-subcrit} is indeed supercritical.

\begin{figure}[tb]
\def\scale{0.69}
\centering
\begin{tabular}{cc}
    \subfloat[At $t_{4} = 0$, $m_{0}$ and $m_{2}$ are two focus-focus points.]{\includegraphics[scale=\scale]{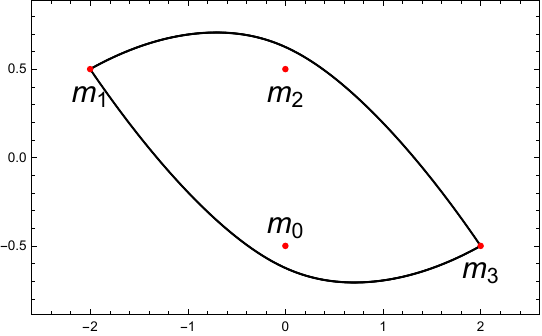}} & 
    \subfloat[At $t_{4} = t_{4,m_{2}}^{+} = -0.25$, $m_{2}$ undergoes a supercritical bifurcation.]{\includegraphics[scale=\scale]{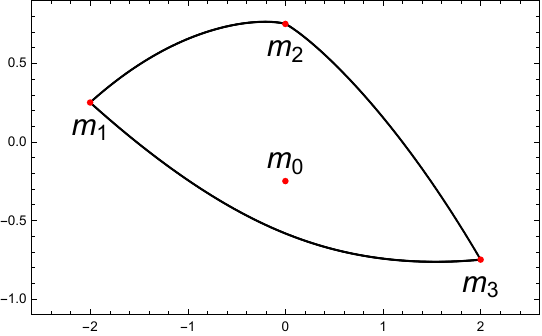}} \\
    \subfloat[At $t_{4} = t_{4,m_{0}}^{+} = -0.75$, $m_{0}$ undergoes a subcritical bifurcation.]{\includegraphics[scale=\scale]{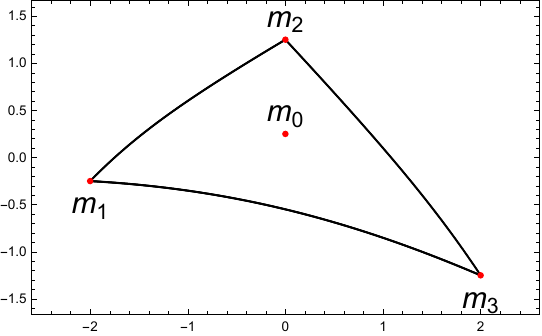}} & 
    \subfloat[At $t_{4} = -1.5$, $m_{0}$ is sitting on a flap.]{\includegraphics[scale=\scale]{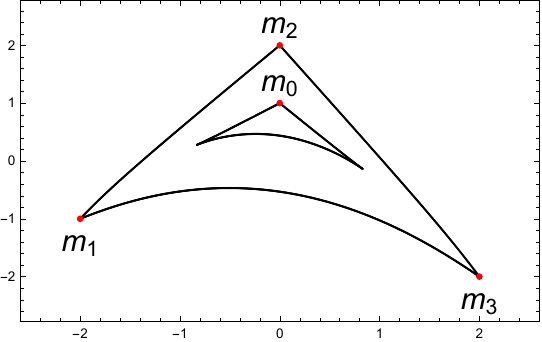}}
\end{tabular}
\caption{The figures shows the image of the momentum map of a rational Gaudin model for various choices of $t_{4}$. Here we have chosen $R_{1} = 1$, $R_{2} = 1$, $t_{1} = -0.5$, $t_{2} = 0$, and $t_{3} = -0.5$.}
\label{fig:rat-subcrit}
\end{figure}

We present a similar result for the trigonometric Gaudin model, given by $w = 0$, depending on $R_{1}$, $R_{2}$, $t_{0}$, and $t_{3}$:

\begin{corollary} \label{cor:subcritical-HHB-trig}
The point $m_{0}$ in the trigonometric Gaudin model, with Hamiltonian given by
\begin{equation*}
H_{0,\mathbf{t}} = t_{0}(z_{1} + z_{2})^{2} + t_{3}(x_{1}x_{2} + y_{1}y_{2}) + t_{4}z_{1}z_{2},    
\end{equation*} goes through a subcritical Hamiltonian Hopf bifurcation at $t_{4} = t_{4,m_{0}}^{+}$ if $R_{1} \neq R_{2}$, and either
\begin{enumerate}[(i)]
    \item $2(R_{1}+R_{2})t_{0} > \sqrt{R_{1}R_{2}}t_{3}$ and $t_{3} < 0$, or
    \item $2(R_{1}+R_{2})t_{0} > -\sqrt{R_{1}R_{2}}t_{3}$ and $t_{3} > 0$.
\end{enumerate}
\end{corollary}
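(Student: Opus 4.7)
The plan is to obtain Corollary~\ref{cor:subcritical-HHB-trig} as a specialization of Theorem~\ref{thm:nonlinearHHB} to the case $w = 0$, followed by a straightforward sign analysis of the coefficient $a_3$. The proof of Theorem~\ref{thm:nonlinearHHB} normalizes the Hamiltonian so that the coefficient of $N$ in the normal form~\eqref{eq:nonlinear-normal-form-Gaudin} equals $\sigma = 1$; combined with the sub/supercriticality criterion recalled in Section~\ref{sec:HHB-theory}, this shows that the bifurcation of $m_0$ at $t_4 = t_{4,m_0}^{+}$ is subcritical if and only if $a_3 < 0$.

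I would then specialize the expression for $a_3$ displayed at the beginning of Section~\ref{sec:momentum_map} to $w = 0$. The middle summand $2R_1R_2\,w(R_1 t_2 - R_2 t_1)$ of the numerator vanishes, and the two remaining terms share the common factor $(R_1-R_2)^2$. This leads to the clean factorization
\begin{equation*}
a_3\big|_{w=0} = \frac{(R_1-R_2)^{2}\bigl[2(R_1+R_2)\,t_0 + \sqrt{R_1R_2}\,t_3\bigr]}{8(R_1R_2)^{3/2}(R_1+R_2)\,t_3},
\end{equation*}
at which point the question of subcriticality is purely a matter of comparing signs.

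If $R_1 = R_2$ the numerator vanishes identically, so $a_3 = 0$ and the bifurcation is degenerate (this situation is the one addressed by Corollary~\ref{cor:degenerate-HHB-trig}). Assuming $R_1 \neq R_2$, both $(R_1-R_2)^{2}$ and $8(R_1R_2)^{3/2}(R_1+R_2)$ are strictly positive, so $a_3 < 0$ is equivalent to the bracketed factor $2(R_1+R_2)\,t_0 + \sqrt{R_1R_2}\,t_3$ and $t_3$ having opposite signs. Splitting on the two subcases $t_3 < 0$ and $t_3 > 0$ then produces exactly the conditions~(i) and (ii) claimed in the statement.

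The only potential pitfall in this proposal is bookkeeping the sign convention: one must verify that the scaling by $b^{-1}$ in the proof of Theorem~\ref{thm:nonlinearHHB} really does produce $\sigma = 1$, so that subcriticality corresponds to $a_3 < 0$ rather than to $a_3 > 0$. Beyond that, the corollary is a direct consequence of the formula for $a_3$ established earlier, specialized to $w = 0$.
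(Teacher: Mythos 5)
Your method is exactly the paper's own: the text asserts that this corollary ``follows immediately from the coefficients'' in \eqref{eq:nonlinear-normal-form-Gaudin}, and your factorization of $a_3\big|_{w=0}$ and the reduction of subcriticality to ``the bracketed factor and $t_3$ have opposite signs'' are both correct. The gap is in your final sentence. Splitting on the sign of $t_3$ does \emph{not} reproduce conditions (i) and (ii) as printed: what $a_3<0$ actually yields is $2(R_1+R_2)t_0 > -\sqrt{R_1R_2}\,t_3$ when $t_3<0$, and $2(R_1+R_2)t_0 < -\sqrt{R_1R_2}\,t_3$ when $t_3>0$. Condition (i) of the statement has $+\sqrt{R_1R_2}\,t_3$ on the right-hand side instead of $-\sqrt{R_1R_2}\,t_3$ (a strictly weaker requirement when $t_3<0$), and condition (ii) is the \emph{reverse} of the inequality you derive. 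That your derived inequalities, and not the printed ones, are the correct characterization can be checked against the paper's own example in Figure \ref{fig:trig-col} ($R_1=1$, $R_2=2$, $t_0=-\tfrac{1}{2}$, $t_3=\tfrac{1}{2}$), which is described there as subcritical: one has $2(R_1+R_2)t_0=-3$ and $-\sqrt{R_1R_2}\,t_3=-\tfrac{\sqrt{2}}{2}$, so the printed (ii) fails while your inequality $-3<-\tfrac{\sqrt{2}}{2}$ holds, and indeed $a_3<0$ there. So your computation is sound, but the concluding claim that it ``produces exactly the conditions (i) and (ii) claimed in the statement'' is false as written; a correct write-up must either derive the corrected inequalities or explicitly flag the discrepancy with the printed statement rather than assert agreement.

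A secondary point: the ``potential pitfall'' you mention is real and you leave it unresolved. At $t_4=t_{4,m_0}^{+}$ one computes $b=2t_3/\sqrt{R_1R_2}$, which is negative when $t_3<0$, so the scaling by $b^{-1}$ multiplies the whole Hamiltonian by a negative constant in that case; the identification $\sigma=1$, hence ``subcritical $\Leftrightarrow a_3<0$'', should be verified separately for the two signs of $t_3$ rather than merely noted as something ``one must verify''.
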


Just as in the rational case, one can determine the requirements for the bifurcation to be supercritical by considering the cases not mentioned in the corollary, except for the special case $a_{3} = 0$. The degenerate case also happens for the trigonometric model, as described in Corollary \ref{cor:degenerate-HHB-trig}. 

In Figure \ref{fig:trig-col} we present a situation where both $m_{0}$ and $m_{2}$ go through subcritical Hamiltonian Hopf bifurcations. The situation is given by a trigonometric Gaudin model satisfying criteria \textit{(ii)} of Corollary \ref{cor:subcritical-HHB-trig}. In fact, the figure suggests that there are more bifurcations taking place than only Hamiltonian Hopf bifurcations, which we list below. To locate the exact values for the bifurcations, it is useful to consider the reduced Hamiltonian. Furthermore, we will represent the Hamiltonian in terms of $J = R_{1}z_{1} + R_{2}z_{2}$, $K = R_{1}z_{1} - R_{2}z_{2}$, and $\xi = x_{1}x_{2} + y_{1}y_{2} + z_{1}z_{2}$, as Sadovski{\'i} and Zhilinski{\'i} did \cite{Sadovskii1999}. Then, with $R_{1} = 1$, $R_{2} = 2$, $w = 0$, $t_{0} = -\frac{1}{2}$ and $t_{3} = \frac{1}{2}$, as in Figure \ref{fig:trig-col}, the reduced Hamiltonian at $J = j$ is
\begin{equation*}
\mathcal{H}_{j,t_{4}}(K,\xi) = -\frac{1}{2} \left( \frac{j+K}{2} + \frac{j-K}{4} \right)^{2} + \frac{1}{2} \left( \xi - \frac{1}{8}(j+K)(j-K) \right) + \frac{t_{4}}{8} (j+K)(j-K).
\end{equation*}
Sadovski{\'i} and Zhilinski{\'i} also introduced a variable $\sigma$, which is the $z$-projection of the vector product of $(x_{1},y_{1},z_{1})$ with $(x_{2},y_{2},z_{2})$, $\sigma = (0,0,1) \cdot \left( (x_{1},y_{1},z_{1}) \times (x_{2},y_{2},z_{2}) \right)$. One can show that these variables are related by
\begin{equation} \label{eq:invariants-relation}
\sigma^{2} + \left( \xi - \frac{(j+K)(j-K)}{4R_{1}R_{2}} \right)^{2} = \left( 1- \frac{(j+K)^{2}}{4R_{1}^{2}} \right) \left( 1- \frac{(j-K)^{2}}{4R_{2}^{2}} \right).
\end{equation}
It turns out that it is more convenient to do the computations with $J,K,\sigma$ variables. In these variables, the $1$-jet of the reduced Hamiltonian is
\begin{align*}
(D^{1}_{K_{0},\sigma_{0}}\mathcal{H}_{j,t_{4}})(K,\sigma) = 4\sigma f_{j}(K_{0},\sigma_{0}) (\sigma-\sigma_{0}) - \frac{1}{8} F_{j,t_{4}}^{\pm}(K_{0},\sigma_{0}) (K-K_{0}),
\end{align*}
where
\begin{align*}
f_{j}(K_{0},\sigma_{0}) &= \left( 64 + j^{4} - 24jK_{0} - 20K_{0}^{2} + K_{0}^{4} - 2j^{2}(10+K_{0}^{2}) - 64\sigma_{0}^{2} \right)^{-1/2}, \\
F_{j,t_{4}}^{\pm}(K_{0},\sigma_{0}) &= \frac{3j}{2} + 2 \left( t_{4}+\frac{1}{4} \right) K_{0} \pm \frac{1}{4}f_{j}(K_{0},\sigma_{0})(24j + 40K_{0} + 4j^{2}K_{0} - 4K_{0}^{3}).
\end{align*}
The choice of plus or minus comes from the squares in \eqref{eq:invariants-relation}. We note that all singularities have $\sigma = 0$. The other singularities are then given by $F_{j}(K_{0},0) = 0$. Furthermore, we are in particular interested in knowing when the singularities are degenerate. Hence, we want to study the Hessian. We find that the Hessian at $K = K_{0}$ and $\sigma = 0$ is given by
\begin{equation*}
\text{Hess}_{\mathcal{H}_{j,t_{4}}}(K_{0},0) = 
\begin{pmatrix}
G_{j,t_{4}}(K_{0}) & 0 \\
0 & 4f_{j}(K_{0},0)
\end{pmatrix},
\end{equation*}
where
\begin{align*}
G_{j,t_{4}}(K_{0}) 
= &\frac{(f_{j}(K_{0},0))^{3}}{4} \left( 6j + 10K_{0} + j^{2}K_{0} - K_{0}^{3} \right)^{2} \\&+ f_{j}(K_{0},0) \left( \frac{5}{4} + \frac{j^{2}}{8} - \frac{3K_{0}^{2}}{8} \right) - \frac{t_{4}}{4} - \frac{1}{16}.
\end{align*}

\begin{figure}[p]
\def\scale{0.5}
\centering
\begin{tabular}{ccc}
    \subfloat[At $t_{4} = -\frac{3}{2}$, a pleat appears in the image of the momentum map. \label{fig:trig-pleat1}]{\includegraphics[scale=\scale]{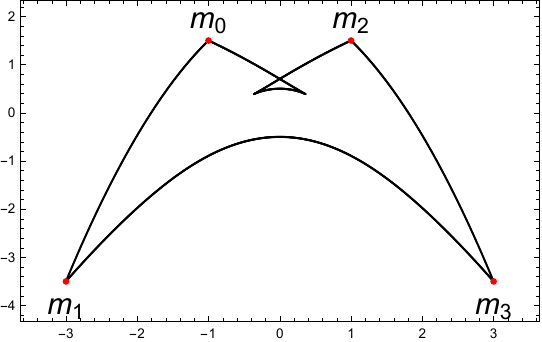}} & 
    \subfloat[At $t_{4} = t_{4,m_{0}}^{-} = t_{4,m_{2}}^{-}$, two supercritical Hamiltonian Hopf bifurcations take place. \label{fig:trig-super}]{\includegraphics[scale=\scale]{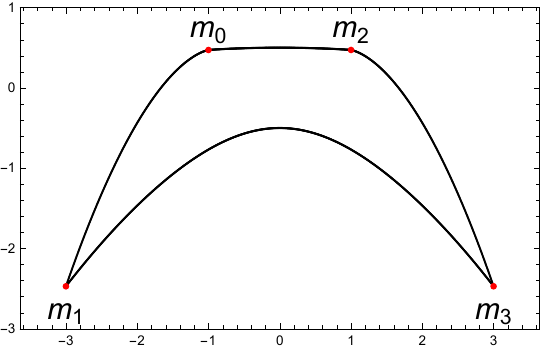}} & 
    \subfloat[At $t_{4} = \frac{3}{8}$, $m_{0}$ and $m_{2}$ have both become focus-focus points. At this point, a bifurcation a pleat occurs. \label{fig:trig-pleatbif1}]{\includegraphics[scale=\scale]{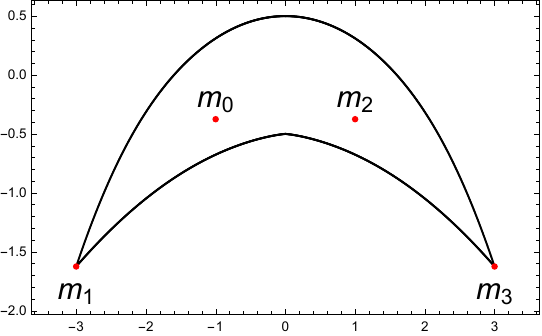}} \\
    \subfloat[At $t_{4} = t_{4,m_{0}}^{+} = t_{4,m_{2}}^{+}$, two subcritical Hamiltonian Hopf bifurcations take place. \label{fig:trig-sub}]{\includegraphics[scale=\scale]{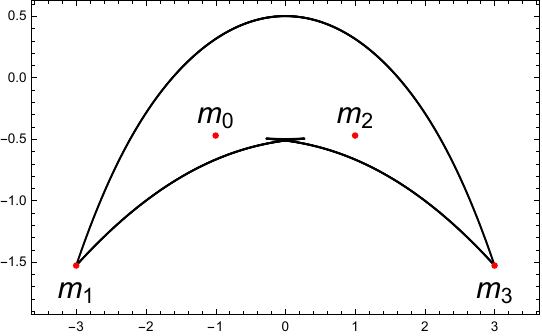}} & 
    \multicolumn{2}{c}{\subfloat[At $t_{4} = 0.495$, $m_{0}$ and $m_{2}$ have gone through subcritical Hamiltonian Hopf bifurcation, and two flaps have appeared, on which $m_{0}$ and $m_{2}$ are sitting. \label{fig:trig-flap}]{\includegraphics[scale=\scale]{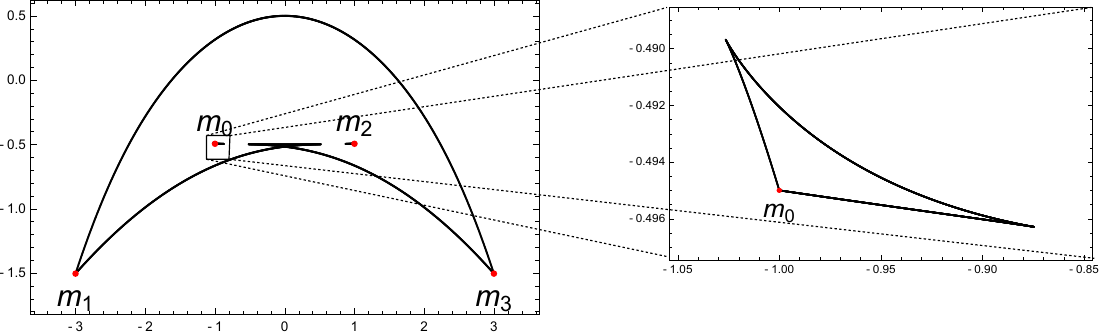}}} \\
    \subfloat[At $t_{4} = 0.58$, the two flaps have collided with the pleat. \label{fig:trig-flap-pleat-bif1}]{\includegraphics[scale=\scale]{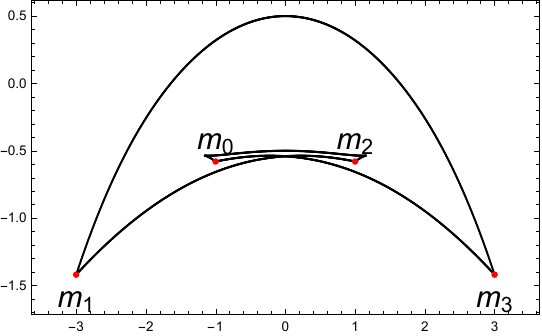}} &
    \subfloat[At $t_{4} = 1$, we can clearly see a pleat has been formed after the collision. \label{fig:trig-pleat2}]{\includegraphics[scale=\scale]{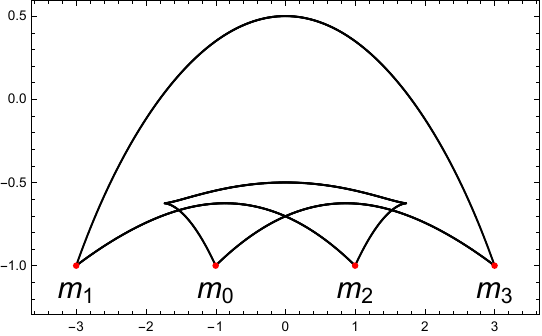}} &
    \subfloat[At $t_{4} = \frac{3}{2}$, the pleat splits in three. \label{fig:trig-newbif}]{\includegraphics[scale=\scale]{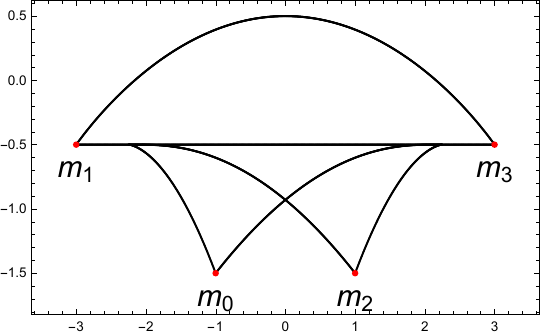}} \\
    \multicolumn{2}{c}{\subfloat[At $t_{4} = 1.51$, the pleat has split into three smaller pleats. \label{fig:trig-flap-pleat-bif2}]{\includegraphics[scale=\scale]{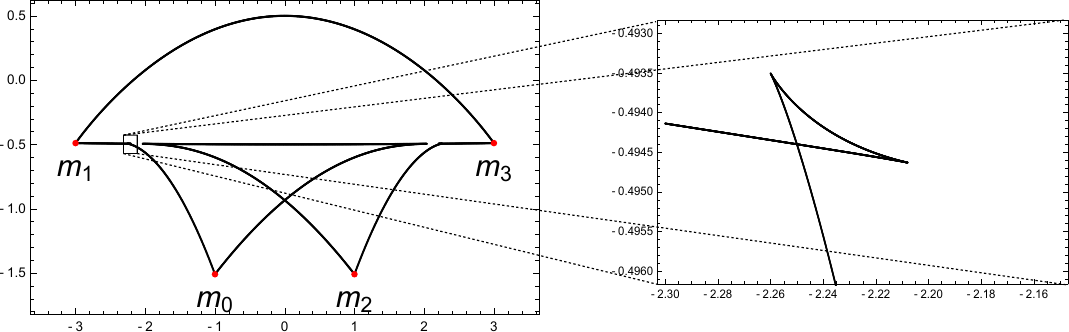}}} &
    \subfloat[At $t_{4} = 2$, the two smallest pleats have disappeared, leaving only one pleat. \label{fig:trig-pleat3}]{\includegraphics[scale=\scale]{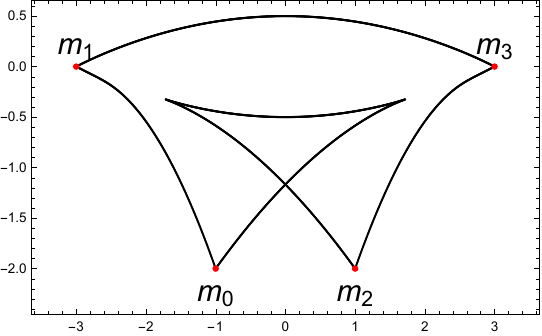}}
\end{tabular}
\caption{The figures shows the image of the momentum map of a trigonometric Gaudin model for various choices of $t_{4}$. Here we have chosen $R_{1} = 1$, $R_{2} = 2$, $t_{0} = -\frac{1}{2}$, and $t_{3} = \frac{1}{2}$, which makes $t_{4,m_{0}}^{\pm} = t_{4,m_{2}}^{\pm} = \pm \frac{\sqrt{2}}{3} \approx \pm 0.47$.}
\label{fig:trig-col}
\end{figure}

\begin{figure}[tb]
    \centering
    \includegraphics[scale=0.7]{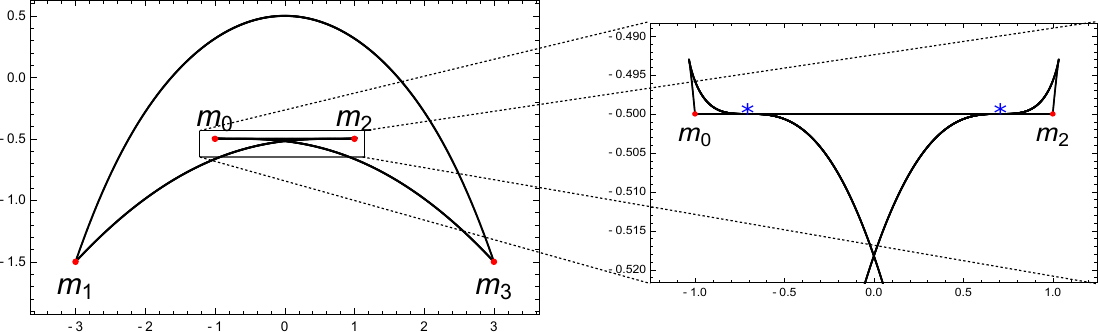}
    \caption{Two collisions, both involving two cusps, denoted by the blue stars. The setup is as in Figure \ref{fig:trig-col}, i.e.\ $R_{1} = 1$, $R_{2} = 2$, $w = 0$, $t_{0} = -\frac{1}{2}$, $t_{3} = \frac{1}{2}$, and $t_{4} = \frac{1}{2}$.}
    \label{fig:pleat-flap-col}
\end{figure}

Now, let us use the reduced Hamiltonian to understand more about the following bifurcations:
\begin{enumerate}
    \item We see in Figures \ref{fig:trig-pleat1} and \ref{fig:trig-super} that a pleat ceases to exist. By the symmetry of the system, we can see that the bifurcation takes place at $J = 0$. To find out when the bifurcation takes place, we simply need to solve the system
    \begin{align} \label{eq:sys-other-bif}
    \begin{cases}
    F_{0,t_{4}}(K_{0},0) = 0, \\
    \det(\text{Hess}_{\mathcal{H}_{0,t_{4}}}(K_{0},0)) = 0.
    \end{cases}
    \end{align} 
    It turns out that $t_{4} = -\frac{7}{8}$ and $K_{0} = 0$ satisfies this system. Thus, the bifurcation takes place at $t_{4} = -\frac{7}{8}$ for $J = 0$ and $\mathcal{H}_{0,-\frac{7}{8}}(0,0) = \frac{1}{2}$. The same method applies to the bifurcation discussed in Point \ref{itm:t4=3/8}. This bifurcation was for instance discussed by Efstathiou and Sugny \cite{EfstathiouSugny}.
    \item At $t_{4} = t_{4,m_{0}}^{-} = t_{4,m_{2}}^{-} = -\frac{\sqrt{2}}{3}$, as illustrated in Figure \ref{fig:trig-super}, the points $m_{0}$ and $m_{2}$ undergo supercritical Hamiltonian Hopf bifurcations. This happens for $H_{w,\mathbf{t}} = \frac{\sqrt{2}}{3}$, and $J = -1$ for $m_{0}$ and $J = 1$ for $m_{2}$.
    \item \label{itm:t4=3/8} A new pleat is born in Figure \ref{fig:trig-pleatbif1}, at $t_{4} = \frac{3}{8}$, and for which $J = 0$ and $\mathcal{H}_{0,\frac{3}{8}}(0,0) = -\frac{1}{2}$.
    \item At $t_{4} = t_{4,m_{0}}^{+} = t_{4,m_{2}}^{+} = \frac{\sqrt{2}}{3}$, as illustrated in Figure \ref{fig:trig-sub}, the points $m_{0}$ and $m_{2}$ undergo subcritical Hamiltonian Hopf bifurcations. This happens for $H_{w,\mathbf{t}} = -\frac{\sqrt{2}}{3}$, and $J = -1$ for $m_{0}$ and $J = 1$ for $m_{2}$.
    \item \label{itm:t4=1/2} At $t_{4} = \frac{1}{2}$, somewhere between Figures \ref{fig:trig-flap} and \ref{fig:trig-flap-pleat-bif1}, the two flaps collide with the pleat generated in Figure \ref{fig:trig-pleatbif1}, see also Figure \ref{fig:pleat-flap-col}. In this case, we solve the system of equations \eqref{eq:sys-other-bif} also with respect to $J$, and find that it is true for $t_{4} = \frac{1}{2}$, $J = \pm \frac{\sqrt{2}}{2}$, and $K = \mp \frac{3\sqrt{2}}{2}$. As this is a collision of cusps, there must be singularities connecting to the degenerate singularity on both sides. We consider $F_{\pm \frac{\sqrt{2}}{2}+\varepsilon,\frac{1}{2}}(\mp \frac{3\sqrt{2}}{2}+\delta,0) = 0$, and find that for all $\varepsilon$ small enough, there exist $\delta$ solving the equation. Thus, this is the bifurcation point. It corresponds to $\mathcal{H}_{\pm \frac{\sqrt{2}}{2},\frac{1}{2}}(\mp \frac{3\sqrt{2}}{2},0) = - \frac{1}{2}$. The same method applies to the bifurcation discussed in Point \ref{itm:t4=3/2}. A similar bifurcation was also studied by Gullentops and Hohloch \cite[Example 7.7]{Gullentops2022}. Note that the new pleat has a different constellation to the one in Figure \ref{fig:trig-pleat1}. That is, now there are curves connecting $m_{1}$ to $m_{2}$, and $m_{3}$ to $m_{0}$, as is best illustrated in Figure \ref{fig:trig-pleat2}. 
    \item \label{itm:t4=3/2} At $t_{4} = \frac{3}{2}$, illustrated by Figure \ref{fig:trig-newbif}, the pleat splits into three smaller pleats. This takes place at $J = \pm \frac{3\sqrt{2}}{2}$, $\mathcal{H}_{\pm \frac{3\sqrt{2}}{2},\frac{1}{2}}(\mp \frac{\sqrt{2}}{2},0) = -\frac{1}{2}$.
    \item[(7?)] We conjecture that the two small pleats on the left and right sides in Figure \ref{fig:trig-flap-pleat-bif2} at some point cease to exist. Note that we have returned to the original constellation, where there are curves connecting $m_{1}$ to $m_{0}$, and $m_{3}$ to $m_{2}$.
\end{enumerate}

\if
Let us now consider the bifurcation depicted in Figure \ref{fig:trig-col-pleat-bif-2} a bit further, by studying the level sets of the reduced Hamiltonian on the reduced phase space. As in Sadovski\'{i} and Zhilinski\'{i} \cite{Sadovskii1999}, we introduce $J_{z} = J = R_{1}z_{1} + R_{2}z_{2}$, $K_{z} = R_{1}z_{1} - R_{2}z_{2}$, and $\xi = x_{1}x_{2} + y_{1}y_{2} + z_{1}z_{2}$. Thus, after noting that we may write
\begin{align*}
z_{1} = \frac{J_{z}+K_{z}}{2R_{1}}, \quad \text{and} \quad z_{2} = \frac{J_{z}-K_{z}}{2R_{2}},
\end{align*}
then for a fixed value of $J_{z}$, the reduced Hamiltonian is
\begin{align*}
\mathcal{H}^{J_{z}}_{w,\mathbf{t}}(K_{z},\xi) = &t_{0} \left( \frac{J_{z}+K_{z}}{2R_{1}} + \frac{J_{z}-K_{z}}{2R_{2}} \right)^{2} + w \left( t_{1} \frac{J_{z}+K_{z}}{2R_{1}} + t_{2} \frac{J_{z}-K_{z}}{2R_{2}} \right) \\&+ t_{3} \left( \xi - \frac{J_{z}^{2}-K_{z}^{2}}{4R_{1}R_{2}} \right) + t_{4} \frac{J_{z}^{2}-K_{z}^{2}}{4R_{1}R_{2}}.
\end{align*}
As the other integral of motion, $J$, is the same as in Sadovski\'{i} and Zhilinski\'{i} \cite{Sadovskii1999}, the reduced phase space is also the same. In Figure \ref{fig:pleatBif}, level sets of the reduced Hamiltonian has been drawn for three values of $J_{z}$; one to the left of the point of bifurcation, one at the point of bifurcation, and one to the right of the the point of bifurcation. 

\begin{figure}[tb]
\centering
\includegraphics[scale=1.5]{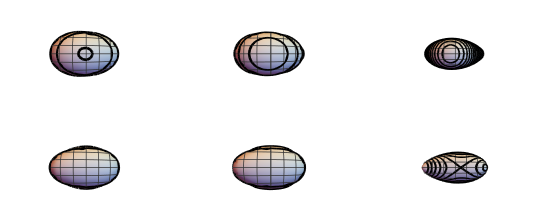}
\caption{We have the same configurations as in Figure \ref{fig:trig-col-pleat-bif-2}. The figure shows the reduced phase space for various values of $J$; let $j := -2.25$, then the left pair (top shows reduced phase space from above, bottom from below) is at $J = j-0.1$, middle pair at $J = j$, and right pair at $J = -0.5$. The thick black lines display level sets on the reduced phase space.}
\label{fig:pleatBif}
\end{figure}
\fi

Finally we consider the case when $a_{3} = 0$. We solve $a_{3} = 0$ for $t_{3}$, and find that this happens when
\begin{equation*}
t_{3} = t_{3,m_{0}}^{+} := -\frac{2((R_{1}+R_{2})(R_{1}-R_{2})^{2}t_{0} - wR_{1}R_{2}(R_{2}t_{1} - R_{1}t_{2})}{\left(R_1-R_2\right){}^2 \sqrt{R_1 R_2}}.
\end{equation*}
At $t_{4} = t_{4,m_{0}}^{+}$ and $t_{3} = t_{3,m_{0}}^{+}$ we find that the coefficient of $M^{3}$ in \eqref{eq:nonlinear-normal-form-Gaudin} is
\begin{equation*}
a_{6} = \frac{\left(R_1-R_2\right){}^2 \left(R_1 R_2 w \left(R_1 \left(t_1+2 t_2\right)-R_2 \left(2 t_1+t_2\right)\right)+\left(R_1+R_2\right) \left(R_1-R_2\right){}^2 t_0\right)}{384 \left(R_1 R_2\right){}^{2} \left(R_1 R_2 w \left(R_1 t_2-R_2 t_1\right)+\left(R_1+R_2\right) \left(R_1-R_2\right){}^2 t_0\right)}.
\end{equation*}
By definition, the Hamiltonian Hopf bifurcation is degenerate if $a_{3} = 0$ and $a_{6} \neq 0$, yielding the following results:

\begin{corollary} \label{cor:degenerate-HHB-rat}
The point $m_{0}$ in the rational Gaudin model goes through a degenerate Hamiltonian Hopf bifurcation at $t_{4} = t_{4,m_{0}}^{+}$ if $t_{3} = \frac{2 \sqrt{R_1 R_2} \left(R_2 t_1-R_1 t_2\right)}{\left(R_1-R_2\right){}^2}$, $R_{1} \neq R_{2}$, $R_{1}t_{2} \neq R_{2}t_{1}$, and $R_{1}(t_{1} + 2t_{2}) \neq R_{2}(2t_{1} + t_{2})$.
\end{corollary}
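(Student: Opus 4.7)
The plan is to simply specialise the general expressions for the normal form coefficients $a_{3}$ and $a_{6}$ at $t_{4} = t_{4,m_{0}}^{+}$, which are displayed in the paragraphs immediately preceding the corollary, to the rational Gaudin model by fixing $w = 1$ and $t_{0} = 0$, and then to invoke the definition of degeneracy: a Hamiltonian Hopf bifurcation at $m_{0}$ is degenerate precisely when $a_{3} = 0$ and $a_{6} \neq 0$.

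First I would substitute $w = 1$, $t_{0} = 0$ into
\begin{equation*}
t_{3,m_{0}}^{+} = -\frac{2\bigl((R_{1}+R_{2})(R_{1}-R_{2})^{2}t_{0} - w R_{1}R_{2}(R_{2}t_{1}-R_{1}t_{2})\bigr)}{(R_{1}-R_{2})^{2}\sqrt{R_{1}R_{2}}}.
\end{equation*}
The term containing $t_{0}$ drops out, and after cancelling $\sqrt{R_{1}R_{2}}$ we obtain exactly
\begin{equation*}
t_{3} = \frac{2\sqrt{R_{1}R_{2}}(R_{2}t_{1} - R_{1}t_{2})}{(R_{1}-R_{2})^{2}},
\end{equation*}
which is the condition appearing in the statement. (Note that this already forces $R_{1} \neq R_{2}$, so that $t_{3,m_{0}}^{+}$ is defined.) By Theorem \ref{thm:nonlinearHHB}, this is exactly the condition for $a_{3} = 0$ in the rational case.

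Next I would substitute the same $w = 1$, $t_{0} = 0$ into the formula for $a_{6}$ valid at $t_{4} = t_{4,m_{0}}^{+}$ and $t_{3} = t_{3,m_{0}}^{+}$, namely
\begin{equation*}
a_{6} = \frac{(R_{1}-R_{2})^{2}\bigl(R_{1}R_{2}w(R_{1}(t_{1}+2t_{2}) - R_{2}(2t_{1}+t_{2})) + (R_{1}+R_{2})(R_{1}-R_{2})^{2}t_{0}\bigr)}{384(R_{1}R_{2})^{2}\bigl(R_{1}R_{2}w(R_{1}t_{2}-R_{2}t_{1}) + (R_{1}+R_{2})(R_{1}-R_{2})^{2}t_{0}\bigr)}.
\end{equation*}
Again the $t_{0}$-terms vanish, and after factoring one $R_{1}R_{2}$ from numerator and denominator we are left with
\begin{equation*}
a_{6} = \frac{(R_{1}-R_{2})^{2}\bigl(R_{1}(t_{1}+2t_{2}) - R_{2}(2t_{1}+t_{2})\bigr)}{384 R_{1}^{2}R_{2}^{2}(R_{1}t_{2}-R_{2}t_{1})}.
\end{equation*}
The denominator of this fraction is non-zero precisely when $R_{1}t_{2} \neq R_{2}t_{1}$, and the numerator is non-zero precisely when both $R_{1} \neq R_{2}$ and $R_{1}(t_{1}+2t_{2}) \neq R_{2}(2t_{1}+t_{2})$. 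These are exactly the inequalities listed in the statement, so under the hypotheses of the corollary we have simultaneously $a_{3} = 0$ and $a_{6} \neq 0$, which by definition is a degenerate Hamiltonian Hopf bifurcation.

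Since the work is entirely a specialisation of formulae already justified in Section \ref{sec:nonlinear-HHB}, there is no genuine conceptual obstacle; the only care required is bookkeeping to confirm that the $R_{1}t_{2} \neq R_{2}t_{1}$ condition arising from the denominator of $a_{6}$ is indeed compatible with, and not implied by, the defining equation for $t_{3,m_{0}}^{+}$. A brief check shows that $t_{3,m_{0}}^{+} = 0$ if and only if $R_{1}t_{2} = R_{2}t_{1}$, so the hypothesis $R_{1}t_{2} \neq R_{2}t_{1}$ is equivalent to $t_{3} \neq 0$ (which was already required for Lemma \ref{lem:rank0type} to even describe a Hamiltonian Hopf bifurcation), making the list of hypotheses consistent and non-vacuous.
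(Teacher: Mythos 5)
Your proposal is correct and is essentially the paper's own argument: the paper likewise obtains the corollary by specialising the displayed formulas for $t_{3,m_{0}}^{+}$ (equivalently $a_{3}=0$) and for $a_{6}$ to the rational case $w=1$, $t_{0}=0$, and then invoking the definition that the bifurcation is degenerate when $a_{3}=0$ and $a_{6}\neq 0$. Your added remark that $R_{1}t_{2}\neq R_{2}t_{1}$ is exactly the condition $t_{3}\neq 0$ needed for Lemma \ref{lem:rank0type} is a correct and worthwhile consistency check, though the paper leaves it implicit.
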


\begin{corollary} \label{cor:degenerate-HHB-trig}
The point $m_{0}$ in the trigonometric Gaudin model goes through a degenerate Hamiltonian Hopf bifurcation at $t_{4} = t_{4,m_{0}}^{+} = 4t_{0}$ if $t_{3} = -\frac{2(R_{1} + R_{2})t_{0}}{\sqrt{R_{1}R_{2}}}$ and $R_{1} \neq R_{2}$.
\end{corollary}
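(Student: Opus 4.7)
The plan is to specialise the coefficient formulas for $a_{3}$ and $a_{6}$ in the normal form from Theorem \ref{thm:nonlinearHHB}, both displayed in the lines immediately preceding the corollary, to the trigonometric case $w = 0$, and then to apply the definition of a degenerate Hamiltonian Hopf bifurcation, which requires $a_{3} = 0$ and $a_{6} \neq 0$.

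First I would set $w = 0$ in the given expression for $a_{3}$ at $t_{4} = t_{4,m_{0}}^{+}$. The $w$-dependent middle term in the numerator drops out, leaving a numerator that factors as $(R_{1}-R_{2})^{2}\bigl[\,2(R_{1}+R_{2})t_{0} + \sqrt{R_{1}R_{2}}\,t_{3}\bigr]$. Under the hypothesis $R_{1} \neq R_{2}$, the vanishing condition $a_{3} = 0$ is therefore equivalent to $t_{3} = -2(R_{1}+R_{2})t_{0}/\sqrt{R_{1}R_{2}}$, which is exactly the stated condition. Substituting this value back into the formula for $t_{4,m_{0}}^{+}$ from Section \ref{sec:linear-HHB} then recovers the claimed expression for the bifurcation value (up to the $\pm$ sign convention used in the definition of $t_{4,m_{k}}^{\pm}$, which one simply has to trace).

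Next I would substitute both $w = 0$ and the above value of $t_{3}$ into the formula for $a_{6}$. A short simplification shows that the numerator and denominator of $a_{6}$ collapse to nonzero scalar multiples of $(R_{1}-R_{2})^{2}(R_{1}+R_{2})t_{0}$, leaving
\[
a_{6} \;=\; \frac{(R_{1}-R_{2})^{2}}{384\,(R_{1}R_{2})^{2}},
\]
which is nonzero exactly when $R_{1} \neq R_{2}$. By the definition given at the end of Section \ref{sec:HHB-theory}, this confirms that the bifurcation is degenerate.

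The only point that needs a moment's care is that the focus-focus hypothesis of Lemma \ref{lem:rank0type} must remain satisfied for the chosen $t_{3}$. For $w = 0$ and $R_{1} \neq R_{2}$, that hypothesis reads $(R_{1}-R_{2})t_{3} > 0$, which, combined with the above expression for $t_{3}$, forces $t_{0}$ and $R_{2}-R_{1}$ to have the same sign. This is a mild sign constraint which is either implicit in the ambient setting of the corollary or worth recording as an extra hypothesis, but it presents no conceptual obstacle; once it is in place, the proof is a direct algebraic evaluation of the coefficient formulas already established in Theorem \ref{thm:nonlinearHHB}.
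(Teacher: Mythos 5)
Your proposal is correct and takes essentially the same route as the paper: there the corollary is read off by solving $a_{3} = 0$ for $t_{3}$ (giving $t_{3,m_{0}}^{+}$, which at $w = 0$ reduces to the stated value), evaluating $a_{6}$ at $t_{4} = t_{4,m_{0}}^{+}$, $t_{3} = t_{3,m_{0}}^{+}$, and invoking the definition of degeneracy ($a_{3} = 0$, $a_{6} \neq 0$), exactly as you do after setting $w = 0$. Your extra remarks --- tracing the sign in $t_{4,m_{0}}^{+}$ and verifying the focus-focus hypothesis of Lemma \ref{lem:rank0type} (and, implicitly, $t_{0} \neq 0$ so that $t_{3} \neq 0$) --- are legitimate points of care that the paper leaves implicit.
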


In Figure \ref{fig:trig-degenerate} we see the image of the momentum map of a trigonometric Gaudin model going through a degenerate Hamiltonian Hopf bifurcation. 

\begin{figure}[tb]
\def\scale{0.5}
\centering
\subfloat[At $t_{4} = 1$, $m_{0}$ and $m_{2}$ are two focus-focus points.]{\includegraphics[scale=\scale]{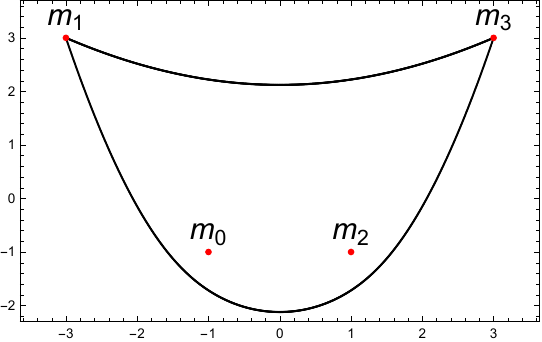}}
\hfil
\subfloat[At $t_{4} = t_{4,m_{0}}^{+} = t_{4,m_{2}}^{+} = 2$, $m_{0}$ and $m_{2}$ undergo degenerate bifurcation.]{\includegraphics[scale=\scale]{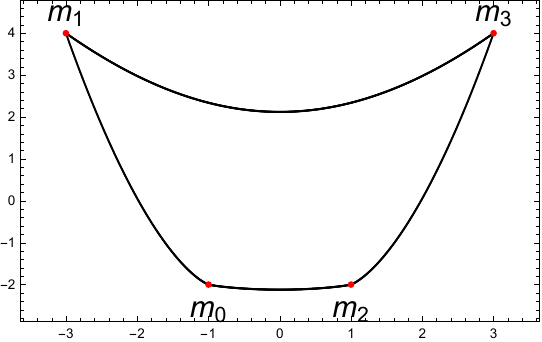}}
\hfil
\subfloat[At $t_{4} = 2.5$, $m_{0}$ and $m_{2}$ are two elliptic-elliptic points.]{\includegraphics[scale=\scale]{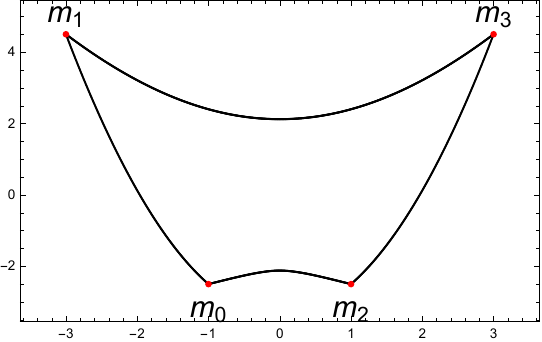}}
\caption{The figures shows the image of the momentum map of a trigonometric Gaudin model for various choices of $t_{4}$ going through a degenerate Hamiltonian Hopf bifurcation. Here we have chosen $R_{1} = 1$, $R_{2} = 2$, $t_{0} = \frac{1}{2}$, which makes $t_{3,m_{0}}^{+} = \frac{3}{\sqrt{2}}$ and $t_{4,m_{0}}^{+} = 2$.}
\label{fig:trig-degenerate}
\end{figure}

We could go even further, and look for possible $2$-degenerate Hamiltonian Hopf bifurcations. We find that, in the rational model, then $a_{6}$ vanishes for
\begin{equation*}
t_{2} = \frac{\left(R_1-2 R_2\right) t_1}{R_2-2 R_1}.
\end{equation*}
However, without computing the normal form up to fourth order, we cannot tell for sure that this satisfies the definition. If it would be $2$-degenerate, then we could even look for $3$-degenerate bifurcations in this case, as we still may vary $t_{1}$.

In the trigonometric model, on the other hand, we get that $a_{6}$ vanishes for $t_{0} = 0$. It could still be $2$-degenerate, but there will be no higher degenerate Hamiltonian Hopf bifurcations in this case.
\begin{appendices} 

\section{Coefficients from Theorem \ref{thm:nonlinearHHB}} \label{sec:appendix-thm-coeffs}

In this appendix we give the coefficients $\tilde{a}_{i}$, $i \in \{1,\dots,9\}$, and $b$, from the proof of Theorem \ref{thm:nonlinearHHB}.

\begin{align*}
\tilde{a}_{1} = - \frac{w(R_{2}t_{1} + R_{1}t_{2}) + (R_{1}-R_{2})t_{4}}{2R_{1}R_{2}}.
\end{align*}
\begin{align*}
b= - \frac{w(R_{2}t_{1} - R_{1}t_{2}) - 2\sqrt{R_{1}R_{2}}t_{3} - (R_{1}+R_{2})t_{4}}{2R_{1}R_{2}}.
\end{align*}
\begin{align*}
\tilde{a}_{2} = - \frac{w(R_{2}t_{1} - R_{1}t_{2}) + 2\sqrt{R_{1}R_{2}}t_{3} - (R_{1}+R_{2})t_{4}}{2R_{1}R_{2}}.
\end{align*}
\begin{align*}
\tilde{a}_{3} = \frac{\tilde{a}_{3}^{n}}{\tilde{a}_{3}^{d}}, \quad \text{where}
\end{align*}
\begin{align*}
\tilde{a}_{3}^{n} = &R_2^2 R_1^2 \big(2 t_0 \big(t_1^2+4 t_2 t_1+t_2^2\big) w^2+4 t_4^2 \big(t_1 w-t_2 w-t_0\big)+4 t_4 w \big(t_2 t_1 w+t_0 t_1-t_0 t_2\big) \\&+t_3^2 \big(-3 t_1 w+3 t_2 w-8 t_0+2 t_4\big)-4 t_4^3\big)+2 R_1^4 t_0 \big(t_2 w+t_4\big){}^2+2 R_2^4 t_0 \big(t_4-t_1 w\big){}^2 \\&+R_2 R_1^3 \big(t_3^2 \big(3 t_2 w+4 t_0+3 t_4\big)-2 \big(t_2 w+t_4\big) \big(t_2 t_4 w+2 t_0 \big(t_1+t_2\big) w+t_4^2\big)\big) \\&+R_2^3 R_1 \big(t_3^2 \big(-3 t_1 w+4 t_0+3 t_4\big)-2 \big(t_4-t_1 w\big) \big(-t_1 t_4 w-2 t_0 \big(t_1+t_2\big) w+t_4^2\big)\big),
\end{align*}
\begin{align*}
\tilde{a}_{3}^{d} = 3 R_1^2 R_2^2 \big(R_2 \big(t_4-t_1 w\big)+R_1 \big(t_2 w+t_4\big)+2 \sqrt{R_1 R_2} t_3\big){}^2.
\end{align*}
\begin{align*}
\tilde{a}_{4} = -\frac{\big(R_1-R_2\big) \big(2wt_{0}(R_{1}^{2}t_{2}+R_{1}R_{2}(t_{2}-t_{1})-R_{2}^{2}t_{1}) + R_{1}R_{2}t_{3}^{2} + 2(R_{1}+R_{2})^{2}t_{0}t_{4}\big)}{2 R_1^2 R_2^2 \big(R_2 \big(t_4-t_1 w\big)+R_1 \big(t_2 w+t_4\big)+2 \sqrt{R_1 R_2} t_3\big)}.
\end{align*}
\begin{align*}
\tilde{a}_{5} = \frac{2 R_1^2 t_0+R_2 R_1 \big(2 t_0+t_4\big)+2 R_2^2 t_0}{6 R_1^2 R_2^2}.
\end{align*}
\begin{align*}
\tilde{a}_{6} = \frac{\tilde{a}_{6}^{n}}{\tilde{a}_{6}^{d}}, \quad \text{where}
\end{align*}
\begingroup
\allowdisplaybreaks
\begin{align*}
\tilde{a}_{6}^{n} = &2 R_2^{11/2} t_3 \big(t_3^2 \big(40 t_0^2-68 \big(w t_1-t_4\big) t_0+30 \big(t_4-w t_1\big){}^2\big)-29 \big(t_4-w t_1\big) 
\\& \big(4 \big(t_4 - 2 w t_1-w t_2\big) t_0^2+2 \big(w t_1-t_4\big) \big(w t_1+w t_2-2 t_4\big) t_0+t_4 \big(t_4-w t_1\big){}^2\big)\big) R_1^{3/2} 
\\&+2 R_2^{9/2} t_3 \big(6 t_3^4-4 \big(40 t_0^2-2 \big(11 w t_1+6 w t_2-15 t_4\big) t_0-\big(w t_1-t_4\big) \big(3 w t_1-9 w t_2 
\\&+5 t_4\big)\big) t_3^2-29 \big(-2 \big(6 w^2 t_1^2+4 w \big(2 w t_2-t_4\big) t_1+w^2 t_2^2-t_4^2-6 w t_2 t_4\big) t_0^2
\\& +\big(w t_1-t_4\big) \big(w^2 t_1^2+w \big(3 w t_2-7 t_4\big) t_1+w^2 t_2^2-t_4^2-9 w t_2 t_4\big) t_0
\\&+2 w t_2 t_4 \big(t_4-w t_1\big){}^2\big)\big) R_1^{5/2} -2 R_2^{7/2} t_3 \big(12 t_3^4+2 \big(-3 \big(t_1^2-4 t_2 t_1+t_2^2\big) w^2
\\&-14 t_0 \big(t_1-t_2\big) w +6 \big(t_2-t_1\big) t_4 w-120 t_0^2+10 t_4^2-52 t_0 t_4\big) t_3^2+29 \big(8 \big(w^2 t_1^2
\\&+w \big(3 w t_2+t_4\big) t_1+w^2 t_2^2-t_4^2 -w t_2 t_4\big) t_0^2+4 t_4 \big(w^2 t_1^2+2 w \big(2 w t_2+t_4\big) t_1+w^2 t_2^2
\\&-2 t_4^2-2 w t_2 t_4\big) t_0+\big(w t_1-t_4\big) t_4  \big(w t_2+t_4\big) \big(w t_1-w t_2+2 t_4\big)\big)\big) R_1^{7/2}+2 R_2^{5/2} t_3
\\& \big(6 t_3^4-4 \big(40 t_0^2+2 \big(6 w t_1+11 w t_2+15 t_4\big) t_0 +\big(w t_2+t_4\big) \big(9 w t_1-3 w t_2+5 t_4\big)\big) t_3^2
\\&+29 \big(2 \big(w^2 t_1^2+2 w \big(4 w t_2+3 t_4\big) t_1+6 w^2 t_2^2-t_4^2 +4 w t_2 t_4\big) t_0^2+\big(w t_2+t_4\big) \big(w^2 t_1^2
\\&+3 w \big(w t_2+3 t_4\big) t_1+w^2 t_2^2-t_4^2+7 w t_2 t_4\big) t_0 +2 w t_1 t_4 \big(w t_2+t_4\big){}^2\big)\big) R_1^{9/2}
\\&+2 R_2^{3/2} t_3 \big(t_3^2 \big(40 t_0^2+68 \big(w t_2+t_4\big) t_0+30 \big(w t_2+t_4\big){}^2\big) -29 \big(w t_2+t_4\big) \big(4 \big(w t_1 
\\&+2 w t_2+t_4\big) t_0^2+2 \big(w t_2+t_4\big) \big(w t_1+w t_2+2 t_4\big) t_0+t_4 \big(w t_2+t_4\big){}^2\big)\big) R_1^{11/2}
\\&+58 \sqrt{R_2} t_0 t_3 \big(w t_2+t_4\big){}^2 \big(2 t_0+w t_2+t_4\big) R_1^{13/2}-3 t_0 \big(w t_2+t_4\big){}^3  \big(2 t_0+w t_2
\\&+t_4\big) R_1^7+R_2 \big(w t_2+t_4\big) \big(3 \big(w t_2+t_4\big) \big(2 \big(3 w t_1+4 w t_2+t_4\big) t_0^2 +\big(w t_2+t_4\big) \big(3 w t_1
\\&+2 w t_2+3 t_4\big) t_0+t_4 \big(w t_2+t_4\big){}^2\big)-2 t_3^2 \big(44 t_0^2+46 \big(w t_2+t_4\big) t_0 +9 \big(w t_2
\\&+t_4\big){}^2\big)\big) R_1^6+R_2^2 \big(6 \big(w t_2+t_4\big) t_3^4+2 \big(44 \big(w t_1+4 w t_2+3 t_4\big) t_0^2 +2 \big(w t_2+t_4\big) \big(35 w t_1
\\&+34 w t_2+43 t_4\big) t_0+\big(w t_2+t_4\big){}^2 \big(15 w t_1-18 w t_2+13 t_4\big)\big) t_3^2 +3 \big(w t_2+t_4\big) \big(
\\&-6 \big(w^2 t_1^2+2 w \big(2 w t_2+t_4\big) t_1+2 w^2 t_2^2-t_4^2\big) t_0^2-\big(w t_2+t_4\big) \big(3 w^2 t_1^2 +w \big(5 w t_2
\\&+11 t_4\big) t_1+w^2 t_2^2-5 t_4^2+5 w t_2 t_4\big) t_0-\big(3 w t_1-t_4\big) t_4 \big(w t_2+t_4\big){}^2\big)\big) R_1^5 
\\&-R_2^3 \big(6 \big(w t_1+2 w t_2+t_4\big) t_3^4+2 \big(88 \big(2 w t_1+3 w t_2+t_4\big) t_0^2+2 \big(12 w^2 t_1^2 +5 w \big(7 w t_2
\\&+11 t_4\big) t_1-w^2 t_2^2+20 t_4^2+51 w t_2 t_4\big) t_0+\big(w t_2+t_4\big) \big(3 w^2 t_1^2 +2 w \big(5 t_4-27 w t_2\big) t_1
\\&-3 w^2 t_2^2+4 t_4^2+24 w t_2 t_4\big)\big) t_3^2-3 \big(2 \big(w^3 t_1^3+3 w^2 \big(4 w t_2+3 t_4\big) t_1^2 +3 w \big(6 w^2 t_2^2
\\& +4 w t_4 t_2-t_4^2\big) t_1+4 w^3 t_2^3-3 t_4^3-12 w t_2 t_4^2-6 w^2 t_2^2 t_4\big) t_0^2+\big(w t_2+t_4\big) \big(w^3 t_1^3
\\&+3 w^2 \big(w t_2+4 t_4\big) t_1^2+w \big(w^2 t_2^2+20 w t_4 t_2-2 t_4^2\big) t_1+t_4 \big(3 w^2 t_2^2-15 w t_4 t_2-7 t_4^2\big)\big) t_0 
\\&+\big(w t_1-t_4\big) t_4 \big(w t_2+t_4\big){}^2 \big(3 w t_1-w t_2+2 t_4\big)\big)\big) R_1^4+R_2^4 \big(6 \big(2 w t_1+w t_2-t_4\big) t_3^4 
\\&-2 \big(-88 \big(3 w t_1+2 w t_2-t_4\big) t_0^2-2 \big(w^2 t_1^2+w \big(51 t_4-35 w t_2\big) t_1-12 w^2 t_2^2-20 t_4^2 
\\&+55 w t_2 t_4\big) t_0+\big(w t_1-t_4\big) \big(3 w^2 t_1^2+6 w \big(9 w t_2+4 t_4\big) t_1-3 w^2 t_2^2-4 t_4^2+10 w t_2 t_4\big)\big) 
\\&t_3^2 -3 t_4 \big(t_4-w t_1\big){}^2 \big(w t_2+t_4\big) \big(w t_1-3 w t_2+2 t_4\big)-6 t_0^2 \big(4 w^3 t_1^3+6 w^2 \big(3 w t_2+t_4\big) t_1^2 
\\&+12 w \big(w^2 t_2^2-w t_4 t_2-t_4^2\big) t_1+w^3 t_2^3+3 t_4^3-3 w t_2 t_4^2-9 w^2 t_2^2 t_4\big)+3 t_0 \big(w^3 \big(w t_2
\\&-3 t_4\big) t_1^3 +3 w^2 \big(w^2 t_2^2-7 w t_4 t_2-4 t_4^2\big) t_1^2+w \big(w^3 t_2^3-15 w^2 t_4 t_2^2+18 w t_4^2 t_2+22 t_4^3\big) t_1 
\\&+t_4 \big(-w^3 t_2^3+12 w^2 t_4 t_2^2+2 w t_4^2 t_2-7 t_4^3\big)\big)\big) R_1^3+R_2^5 \big(6 \big(t_4-w t_1\big) t_3^4+2 \big(-44 \big(
\\&4 w t_1 +w t_2-3 t_4\big) t_0^2+2 \big(w t_1-t_4\big) \big(34 w t_1+35 w t_2-43 t_4\big) t_0+\big(t_4-w t_1\big){}^2 \big(18 w t_1 
\\&-15 w t_2+13 t_4\big)\big) t_3^2+3 \big(t_4-w t_1\big) \big(-6 \big(2 t_1^2 w^2+t_2^2 w^2+4 t_1 t_2 w^2-2 t_2 t_4 w-t_4^2\big) t_0^2 
\\&+\big(w t_1-t_4\big) \big(w^2 t_1^2+5 w \big(w t_2-t_4\big) t_1+3 w^2 t_2^2-5 t_4^2-11 w t_2 t_4\big) t_0+t_4 \big(t_4-w t_1\big){}^2 
\\&\big(3 w t_2+t_4\big)\big)\big) R_1^2+R_2^6 \big(t_4-w t_1\big) \big(3 \big(t_4-w t_1\big) \big(\big(-8 w t_1-6 w t_2+2 t_4\big) t_0^2+\big(w t_1
\\&-t_4\big) \big(2 w t_1+3 w t_2-3 t_4\big) t_0+t_4 \big(t_4-w t_1\big){}^2\big)-2 t_3^2 \big(44 t_0^2-46 \big(w t_1-t_4\big) t_0+9 \big(t_4
\\&-w t_1\big){}^2\big)\big) R_1 +3 R_2^7 t_0 \big(w t_1-t_4\big){}^3 \big(2 t_0-w t_1+t_4\big)+58 \sqrt{R_1} R_2^{13/2} t_0 t_3 \big(t_4
\\&-w t_1\big){}^2 \big(2 t_0-w t_1+t_4\big),
\end{align*}
\endgroup
\begin{align*}
\tilde{a}_{6}^{d} = 60 R_1^3 R_2^3 \big(R_2 \big(t_4-t_1 w\big)+R_1 \big(t_2 w+t_4\big)+2 \sqrt{R_1} \sqrt{R_2} t_3\big){}^4.
\end{align*}
\begin{align*}
\tilde{a}_{7} = \frac{\tilde{a}_{7}^{n}}{\tilde{a}_{7}^{d}}, \quad \text{where}
\end{align*}
\begingroup
\allowdisplaybreaks
\begin{align*}
\tilde{a}_{7}^{n} = &2 R_2^{9/2} t_3 \big(t_3^2 \big(-40 t_0-3 w t_1+3 t_4\big)-4 \big(2 t_4^3-2 \big(t_0+2 w t_1\big) t_4^2+\big(-10 t_0^2+3 w \big(t_2
\\&-t_1\big) t_0 +2 w^2 t_1^2\big) t_4+w t_0 \big(5 w t_1^2+20 t_0 t_1-3 w t_2 t_1+10 t_0 t_2\big)\big)\big) R_1^{3/2}+2 R_2^{7/2} t_3 \big(
\\&t_3^2 \big(24 t_0-12 w t_1 -9 w t_2+19 t_4\big)-4 \big(2 t_4^3-4 w t_1 t_4^2+\big(-20 t_0^2-7 w t_1 t_0
\\&+3 w t_2 t_0+2 w^2 t_1^2\big) t_4 +w t_0 \big(2 w t_1^2-3 w t_2 t_1+5 w t_2^2-20 t_0 t_2\big)\big)\big) R_1^{5/2}-2 R_2^{5/2} t_3 \big(
\\&t_3^2 \big(24 t_0+9 w t_1+12 w t_2 +19 t_4\big)-4 \big(2 t_4^3+4 w t_2 t_4^2+\big(-20 t_0^2-3 w t_1 t_0+7 w t_2 t_0
\\&+2 w^2 t_2^2\big) t_4+w t_0 \big(5 w t_1^2 +20 t_0 t_1-3 w t_2 t_1+2 w t_2^2\big)\big)\big) R_1^{7/2}-2 R_2^{3/2} t_3 \big(-8 t_4^3
\\&+8 \big(t_0-2 w t_2\big) t_4^2+4 \big(10 t_0^2 +3 w \big(t_1-t_2\big) t_0-2 w^2 t_2^2\big) t_4+4 w t_0 \big(-5 w t_2^2+20 t_0 t_2
\\&+3 w t_1 t_2+10 t_0 t_1\big)+t_3^2 \big(-40 t_0 +3 w t_2+3 t_4\big)\big) R_1^{9/2}-16 \sqrt{R_2} t_0 t_3 \big(w t_2+t_4\big) \big(
\\&-5 t_0+w t_2+t_4\big) R_1^{11/2}-64 t_0^2 \big(w t_2+t_4\big){}^2 R_1^6 +R_2 \big(\big(-64 t_0^2-76 w t_2 t_0-76 t_4 t_0
\\&+9 w^2 t_2^2+9 t_4^2+18 w t_2 t_4\big) t_3^2+64 t_0 \big(w t_2+t_4\big) \big(t_4^2 +w t_2 t_4+2 w t_0 \big(t_1+t_2\big)\big)\big) R_1^5
\\&-2 R_2^2 \big(12 t_3^4-\big(64 t_0^2+54 w t_1 t_0-14 w t_2 t_0-36 t_4 t_0+3 w^2 t_2^2 +5 t_4^2+8 w t_2 t_4\big) t_3^2
\\&-32 t_0 \big(t_4-w t_1\big) \big(2 t_4^2+\big(3 t_0+2 w t_2\big) t_4+w t_0 \big(t_1+4 t_2\big)\big)\big) R_1^4 -w R_2^3 \big(t_1+t_2\big) \big(
\\&\big(4 t_0+9 w t_1-9 w t_2+18 t_4\big) t_3^2+64 t_0 \big(2 t_0+t_4\big) \big(w \big(t_2-t_1\big)+2 t_4\big)\big) R_1^3 +2 R_2^4 \big(12 t_3^4
\\&+\big(-64 t_0^2-14 w t_1 t_0+54 w t_2 t_0+36 t_4 t_0-3 w^2 t_1^2-5 t_4^2+8 w t_1 t_4\big) t_3^2 -32 t_0 \big(w t_2
\\&+t_4\big) \big(2 t_4^2+\big(3 t_0-2 w t_1\big) t_4-w t_0 \big(4 t_1+t_2\big)\big)\big) R_1^2+R_2^5 \big(\big(64 t_0^2-76 w t_1 t_0 +76 t_4 t_0
\\&-9 w^2 t_1^2-9 t_4^2+18 w t_1 t_4\big) t_3^2+64 t_0 \big(t_4-w t_1\big) \big(-t_4^2+w t_1 t_4+2 w t_0 \big(t_1+t_2\big)\big)\big) R_1 
\\&+64 R_2^6 t_0^2 \big(t_4-w t_1\big){}^2+16 \sqrt{R_1} R_2^{11/2} t_0 t_3 \big(t_4-w t_1\big) \big(-5 t_0-w t_1+t_4\big),
\end{align*}
\endgroup
\begin{align*}
\tilde{a}_{7}^{d} = 24 R_1^3 R_2^3 \big(2 \sqrt{R_1} \sqrt{R_2} t_3+R_2 \big(t_4-w t_1\big)+R_1 \big(w t_2+t_4\big)\big){}^3.
\end{align*}
\begin{align*}
\tilde{a}_{8} = \frac{\tilde{a}_{8}^{n}}{\tilde{a}_{8}^{d}}, \quad \text{where}
\end{align*}
\begingroup
\allowdisplaybreaks
\begin{align*}
\tilde{a}_{8}^{n} = &-4 R_2^{9/2} t_3 \big(7 t_4^3+7 \big(5 t_0-2 w t_1\big) t_4^2+\big(-78 t_0^2+11 w t_1 t_0+53 w t_2 t_0+7 w^2 t_1^2\big) t_4 
\\&-w t_0 \big(46 w t_1^2+56 t_0 t_1+53 w t_2 t_1+134 t_0 t_2\big)+3 t_3^2 \big(-60 t_0-7 w t_1+7 t_4\big)\big) R_1^{3/2} 
\\&+4 R_2^{7/2} t_3 \big(7 t_4^3+7 \big(4 t_0-w \big(t_1+t_2\big)\big) t_4^2-\big(-7 t_1 t_2 w^2+t_0 \big(123 t_1+95 t_2\big) w
\\&+212 t_0^2\big) t_4 +w t_0 \big(7 w t_1^2+156 t_0 t_1-53 w t_2 t_1-60 w t_2^2-56 t_0 t_2\big)+3 t_3^2 \big(-60 t_0
\\&-14 w t_1+3 w t_2 +17 t_4\big)\big) R_1^{5/2}+4 R_2^{5/2} t_3 \big(7 t_4^3+7 \big(4 t_0+w \big(t_1+t_2\big)\big) t_4^2+\big(7 t_1 t_2 w^2
\\&+t_0 \big(95 t_1+123 t_2\big) w -212 t_0^2\big) t_4+w t_0 \big(-60 w t_1^2+56 t_0 t_1-53 w t_2 t_1+7 w t_2^2
\\&-156 t_0 t_2\big)+3 t_3^2 \big(-60 t_0 -3 w t_1+14 w t_2+17 t_4\big)\big) R_1^{7/2}-4 R_2^{3/2} t_3 \big(7 t_4^3+7 \big(5 t_0
\\&+2 w t_2\big) t_4^2-\big(78 t_0^2+53 w t_1 t_0 +11 w t_2 t_0-7 w^2 t_2^2\big) t_4+w t_0 \big(-46 w t_2^2+56 t_0 t_2
\\&-53 w t_1 t_2+134 t_0 t_1\big)+3 t_3^2 \big(-60 t_0 +7 w t_2+7 t_4\big)\big) R_1^{9/2}+4 \sqrt{R_2} t_0 t_3 \big(w t_2+t_4\big) 
\\&\big(134 t_0+7 w t_2+7 t_4\big) R_1^{11/2}+3 t_0 \big(w t_2+t_4\big){}^2  \big(162 t_0+w t_2+t_4\big) R_1^6-R_2 \big(\big(920 t_0^2
\\&-368 w t_2 t_0-368 t_4 t_0+27 w^2 t_2^2+27 t_4^2+54 w t_2 t_4\big) t_3^2 +3 \big(w t_2+t_4\big) \big(t_4^3+2 \big(2 t_0
\\&+w t_2\big) t_4^2+\big(-316 t_0^2+2 w \big(t_1+3 t_2\big) t_0+w^2 t_2^2\big) t_4 +2 w t_0 \big(w t_2^2+4 t_0 t_2+w t_1 t_2
\\&+162 t_0 t_1\big)\big)\big) R_1^5+R_2^2 \big(-84 t_3^4+2 \big(3 t_2 \big(t_2-2 t_1\big) w^2-80 t_0^2 +5 t_4^2+t_0 \big(66 w t_1
\\&+46 w t_2\big)+\big(-60 t_0-6 w t_1+8 w t_2\big) t_4\big) t_3^2+3 \big(-\big(t_0-2 w t_1\big) t_4^3 +\big(4 t_1 t_2 w^2
\\&+9 t_0 t_1 w+6 t_0 t_2 w-162 t_0^2\big) t_4^2+w \big(w t_0 t_1^2+2 \big(-154 t_0^2+6 w t_2 t_0+w^2 t_2^2\big) t_1 
\\&+8 t_0 t_2 \big(w t_2-79 t_0\big)\big) t_4+w^2 t_0 \big(\big(162 t_0+w t_2\big) t_1^2+t_2 \big(16 t_0+3 w t_2\big) t_1+t_2^2 \big(w t_2 
\\&-308 t_0\big)\big)\big)\big) R_1^4+R_2^3 \big(288 t_3^4+\big(3 \big(13 t_1^2-4 t_2 t_1+13 t_2^2\big) w^2+328 t_0 \big(t_1-t_2\big) w
\\&+2160 t_0^2 +154 t_4^2-2 \big(248 t_0+51 w \big(t_1-t_2\big)\big) t_4\big) t_3^2-3 \big(-2 t_4^4+\big(w \big(t_1-t_2\big)
\\&-8 t_0\big) t_4^3+\big(\big(t_1^2+t_2^2\big) w^2 +8 t_0 \big(t_1-t_2\big) w+632 t_0^2\big) t_4^2+w \big(w \big(4 t_0+w t_2\big) t_1^2
\\&-\big(632 t_0^2-16 w t_2 t_0+w^2 t_2^2\big) t_1 +4 t_0 t_2 \big(158 t_0+w t_2\big)\big) t_4+8 w^2 t_0^2 \big(t_1^2-77 t_2 t_1
\\&+t_2^2\big)\big)\big) R_1^3-R_2^4 \big(84 t_3^4-2 \big(3 t_1 \big(t_1-2 t_2\big) w^2 -2 t_0 \big(23 t_1+33 t_2\big) w-80 t_0^2+5 t_4^2
\\&+\big(-60 t_0-8 w t_1+6 w t_2\big) t_4\big) t_3^2+3 \big(\big(t_0+2 w t_2\big) t_4^3 +\big(-4 t_1 t_2 w^2+6 t_0 t_1 w
\\&+9 t_0 t_2 w+162 t_0^2\big) t_4^2-w \big(-2 w \big(w t_2-4 t_0\big) t_1^2+4 t_0 \big(158 t_0 +3 w t_2\big) t_1+t_0 t_2 \big(308 t_0
\\&+w t_2\big)\big) t_4+w^2 t_0 \big(w t_1^3+\big(308 t_0+3 w t_2\big) t_1^2+t_2 \big(w t_2-16 t_0\big) t_1 -162 t_0 t_2^2\big)\big)\big) R_1^2
\\&-R_2^5 \big(\big(920 t_0^2+368 w t_1 t_0-368 t_4 t_0+27 w^2 t_1^2+27 t_4^2-54 w t_1 t_4\big) t_3^2 +3 \big(t_4-w t_1\big) 
\\&\big(t_4^3+\big(4 t_0-2 w t_1\big) t_4^2+\big(-316 t_0^2-2 w \big(3 t_1+t_2\big) t_0+w^2 t_1^2\big) t_4+2 w t_0 \big(w t_1^2 -4 t_0 t_1
\\&+w t_2 t_1-162 t_0 t_2\big)\big)\big) R_1+3 R_2^6 t_0 \big(t_4-w t_1\big){}^2 \big(162 t_0-w t_1+t_4\big) 
\\&+4 \sqrt{R_1} R_2^{11/2} t_0 t_3 \big(t_4-w t_1\big) \big(134 t_0-7 w t_1+7 t_4\big),
\end{align*}
\endgroup
\begin{align*}
\tilde{a}_{8}^{d} = 240 R_1^3 R_2^3 \big(2 \sqrt{R_1} \sqrt{R_2} t_3+R_2 \big(t_4-w t_1\big)+R_1 \big(w t_2+t_4\big)\big){}^3.
\end{align*}
\begin{align*}
\tilde{a}_{9} = 0.
\end{align*}

\section{Coefficients in \texorpdfstring{$E$}{\textit{E}} and \texorpdfstring{$F$}{\textit{F}}} \label{sec:appendix-EF-coeffs}

\begin{align*}
e_{1} = \frac{e_{1}^{n}}{e_{1}^{d}}, \quad \text{where}
\end{align*}
\begingroup
\allowdisplaybreaks
\begin{align*}
e_{1}^{n} = &R_1^{3/2} R_2^{3/2} t_3 \big(-3 t_1 w+3 t_2 w-8 t_0+2 t_4\big)+\sqrt{R_1} R_2^{5/2} t_3 \big(-3 t_1 w+4 t_0+3 t_4\big) 
\\&+10 R_2^3 t_0 \big(t_1 w-t_4\big)-2 R_1 R_2^2 \big(5 t_0 \big(2 t_1 w+t_2 w-t_4\big)+5 t_1 t_4 w+3 t_3^2-5 t_4^2\big) 
\\&+2 R_1^2 R_2 \big(5 t_4 \big(t_2 w+t_4\big)+5 t_0 \big(t_1 w+2 t_2 w+t_4\big)-3 t_3^2\big)-10 R_1^3 t_0 \big(t_2 w+t_4\big) 
\\&+\sqrt{R_2} R_1^{5/2} t_3 \big(3 \big(t_2 w+t_4\big)+4 t_0\big),
\end{align*}
\endgroup
\begin{align*}
e_{1}^{d} = 12 R_1 R_2 \big(R_2 \big(t_4-t_1 w\big)+R_1 \big(t_2 w+t_4\big)+2 \sqrt{R_1} \sqrt{R_2} t_3\big){}^2.
\end{align*}
\begin{align*}
e_{2} = \frac{\sqrt{R_2} R_1^{3/2} t_3-2 R_1^2 t_0+2 R_2 R_1 \big(2 t_0+t_4\big)-2 R_2^2 t_0+\sqrt{R_1} R_2^{3/2} t_3}{4 R_1 R_2 \big(R_2 \big(t_4-t_1 w\big)+R_1 \big(t_2 w+t_4\big)+2 \sqrt{R_1} \sqrt{R_2} t_3\big)}.
\end{align*}
\begin{align*}
e_{3} = \frac{\big(R_1-R_2\big) \big(4 \big(R_1+R_2\big) t_0-\sqrt{R_1} \sqrt{R_2} t_3\big)}{4 R_1 R_2 \big(R_2 \big(t_4-t_1 w\big)+R_1 \big(t_2 w+t_4\big)+2 \sqrt{R_1} \sqrt{R_2} t_3\big)}.
\end{align*}

\begin{align*}
f_{1} = \frac{f_{1}^{n}}{f_{1}^{d}}, \quad \text{where}
\end{align*}
\begingroup
\allowdisplaybreaks
\begin{align*}
f_{1}^{n} = &R_2^{9/2} t_3 \big(-1088 \big(4 w t_1+w t_2-3 t_4\big) t_0^2+8 \big(\big(151 w t_1+98 w t_2-325 t_4\big) \big(w t_1
\\&-t_4\big)-60 t_3^2\big) t_0 +\big(w t_1-t_4\big) \big(348 t_3^2+\big(w t_1-t_4\big) \big(90 w t_1-75 w t_2+499 t_4\big)\big)\big) R_1^{3/2}
\\&+R_2^{7/2} t_3 \big(-454 t_4^3 +\big(-1936 t_0+574 w t_1+484 w t_2\big) t_4^2-\big(\big(105 t_1^2+214 t_2 t_1
\\&+15 t_2^2\big) w^2-32 t_0 \big(138 t_1 +85 t_2\big) w+2176 t_0^2\big) t_4-w \big(15 t_1 \big(t_1^2+18 t_2 t_1-t_2^2\big) w^2
\\&+8 t_0 \big(53 t_1^2+98 t_2 t_1+15 t_2^2\big) w -2176 t_0^2 \big(3 t_1+2 t_2\big)\big)+12 t_3^2 \big(40 t_0+2 w t_1-9 w t_2
\\&+29 t_4\big)\big) R_1^{5/2}+R_2^{5/2} t_3 \big(-454 t_4^3 -2 \big(968 t_0+242 w t_1+287 w t_2\big) t_4^2-\big(\big(15 t_1^2
\\&+214 t_2 t_1+105 t_2^2\big) w^2+32 t_0 \big(85 t_1+138 t_2\big) w +2176 t_0^2\big) t_4-w \big(-15 t_2 \big(-t_1^2
\\&+18 t_2 t_1+t_2^2\big) w^2+8 t_0 \big(15 t_1^2+98 t_2 t_1+53 t_2^2\big) w +2176 t_0^2 \big(2 t_1+3 t_2\big)\big)+12 t_3^2 \big(40 t_0
\\&+9 w t_1-2 w t_2+29 t_4\big)\big) R_1^{7/2}+R_2^{3/2} t_3 \big(1088 \big(w t_1 +4 w t_2+3 t_4\big) t_0^2+8 \big(\big(w t_2+t_4\big) 
\\&\big(98 w t_1+151 w t_2+325 t_4\big)-60 t_3^2\big) t_0+\big(w t_2 +t_4\big) \big(\big(w t_2+t_4\big) \big(75 w t_1-90 w t_2 
\\&+499 t_4\big)-348 t_3^2\big)\big) R_1^{9/2}-\sqrt{R_2} t_3 \big(w t_2+t_4\big) \big(1088 t_0^2+664 \big(w t_2+t_4\big) t_0+45 \big(w t_2
\\&+t_4\big){}^2\big) R_1^{11/2}+296 t_0 \big(w t_2+t_4\big){}^2 \big(2 t_0+w t_2+t_4\big) R_1^6 +2 R_2 \big(t_3^2 \big(800 t_0^2+928 \big(w t_2
\\&+t_4\big) t_0+243 \big(w t_2+t_4\big){}^2\big)-148 \big(w t_2+t_4\big) \big(4 \big(w t_1+2 w t_2 +t_4\big) t_0^2+2 \big(w t_2+t_4\big) 
\\&\big(w \big(t_1+t_2\big)+2 t_4\big) t_0+t_4 \big(w t_2+t_4\big){}^2\big)\big) R_1^5+4 R_2^2 \big(18 t_3^4-\big(1600 t_0^2 +8 \big(33 w t_1
\\&+83 w t_2+150 t_4\big) t_0+\big(w t_2+t_4\big) \big(111 w t_1-153 w t_2+200 t_4\big)\big) t_3^2+74 \big(2 \big(\big(t_1^2 
\\&+8 t_2 t_1+6 t_2^2\big) w^2+2 \big(3 t_1+2 t_2\big) t_4 w-t_4^2\big) t_0^2+\big(w t_2+t_4\big) \big(\big(t_1^2+3 t_2 t_1+t_2^2\big) w^2
\\&+\big(9 t_1 +7 t_2\big) t_4 w-t_4^2\big) t_0+2 w t_1 t_4 \big(w t_2+t_4\big){}^2\big)\big) R_1^4-2 R_2^3 \big(72 t_3^4+\big(3 \big(7 t_1^2
\\&+204 t_2 t_1+7 t_2^2\big) w^2  +42 \big(t_1-t_2\big) t_4 w-4800 t_0^2-314 t_4^2-128 t_0 \big(w t_1-w t_2
\\&+23 t_4\big)\big) t_3^2+148 \big(8 \big(\big(t_1^2+3 t_2 t_1+t_2^2\big) w^2+\big(t_1-t_2\big) t_4 w-t_4^2\big) t_0^2+4 t_4 \big(\big(t_1^2+4 t_2 t_1
\\&+t_2^2\big) w^2+2 \big(t_1-t_2\big) t_4 w-2 t_4^2\big) t_0 +\big(w t_1-t_4\big) t_4 \big(w t_2+t_4\big) \big(w t_1-w t_2+2 t_4\big)\big)\big) 
\\&R_1^3+4 R_2^4 \big(18 t_3^4+\big(-1600 t_0^2+8 \big(83 w t_1 +33 w t_2-150 t_4\big) t_0+\big(w t_1-t_4\big) \big(153 w t_1
\\&-111 w t_2+200 t_4\big)\big) t_3^2-74 \big(-2 \big(\big(6 t_1^2+8 t_2 t_1 +t_2^2\big) w^2-2 \big(2 t_1+3 t_2\big) t_4 w-t_4^2\big) t_0^2
\\&+\big(w t_1-t_4\big) \big(\big(t_1^2+3 t_2 t_1+t_2^2\big) w^2-\big(7 t_1+9 t_2\big) t_4 w -t_4^2\big) t_0+2 w t_2 t_4 \big(t_4-w t_1\big){}^2\big)\big) 
\\&R_1^2+2 R_2^5 \big(t_3^2 \big(800 t_0^2+928 \big(t_4-w t_1\big) t_0+243 \big(t_4-w t_1\big){}^2\big) -148 \big(t_4-w t_1\big) \big(4 \big(t_4
\\&-w \big(2 t_1+t_2\big)\big) t_0^2+2 \big(w t_1-t_4\big) \big(w \big(t_1+t_2\big)-2 t_4\big) t_0+t_4 \big(t_4 -w t_1\big){}^2\big)\big) R_1
\\&+296 R_2^6 t_0 \big(t_4-w t_1\big){}^2 \big(2 t_0-w t_1+t_4\big)+\sqrt{R_1} R_2^{11/2} t_3 \big(w t_1-t_4\big) \big(1088 t_0^2 
\\&+664 \big(t_4-w t_1\big) t_0+45 \big(t_4-w t_1\big){}^2\big),
\end{align*}
\endgroup
\begin{align*}
f_{1}^{d} = 960 R_1^2 R_2^2 \big(2 \sqrt{R_1} \sqrt{R_2} t_3+R_2 \big(t_4-w t_1\big)+R_1 \big(w t_2+t_4\big)\big){}^4.
\end{align*}
\begin{align*}
f_{2} = \frac{f_{2}^{n}}{f_{2}^{d}}, \quad \text{where}
\end{align*}
\begingroup
\allowdisplaybreaks
\begin{align*}
f_{2}^{n} = &2 R_2^{7/2} t_3 \big(-128 t_0^2+\big(-58 w t_1+98 w t_2+92 t_4\big) t_0+30 t_3^2-3 \big(w t_1-t_4\big) \big(w t_1
\\&-2 w t_2 +3 t_4\big)\big) R_1^{3/2}-w R_2^{5/2} \big(t_1+t_2\big) t_3 \big(44 t_0+3 w t_1-3 w t_2+70 t_4\big) R_1^{5/2}
\\&-2 R_2^{3/2} t_3 \big(-128 t_0^2 +\big(-98 w t_1+58 w t_2+92 t_4\big) t_0+30 t_3^2+3 \big(w t_2+t_4\big) \big(2 w t_1
\\&-w t_2+3 t_4\big)\big) R_1^{7/2}+\sqrt{R_2} t_3 \big( -128 t_0^2-36 \big(w t_2+t_4\big) t_0+15 \big(w t_2+t_4\big){}^2\big) R_1^{9/2}
\\&+32 t_0 \big(w t_2+t_4\big) \big(7 t_0-2 \big(w t_2+t_4\big)\big) R_1^5 -8 R_2 \big(28 \big(w t_1+2 w t_2+t_4\big) t_0^2-\big(29 t_3^2
\\&+2 \big(w t_2+t_4\big) \big(4 w t_1+5 w t_2-13 t_4\big)\big) t_0-\big(w t_2+t_4\big)  \big(5 t_4 \big(w t_2+t_4\big)-6 t_3^2\big)\big) R_1^4
\\&-4 R_2^2 \big(112 \big(t_4-w t_1\big) t_0^2+4 \big(t_2 \big(6 t_1+t_2\big) w^2-4 \big(2 t_1+t_2\big) t_4 w +9 t_4^2\big) t_0+10 \big(w t_1
\\&-t_4\big) t_4 \big(w t_2+t_4\big)+t_3^2 \big(30 t_0+9 w t_1+15 w t_2+28 t_4\big)\big) R_1^3+4 R_2^3 \big(112 \big(w t_2 +t_4\big) t_0^2
\\&+4 \big(t_1 \big(t_1+6 t_2\big) w^2+4 \big(t_1+2 t_2\big) t_4 w+9 t_4^2\big) t_0+10 \big(w t_1-t_4\big) t_4 \big(w t_2+t_4\big)
\\&+t_3^2 \big(30 t_0 -15 w t_1-9 w t_2+28 t_4\big)\big) R_1^2+8 R_2^4 \big(-28 \big(2 w t_1+w t_2-t_4\big) t_0^2-\big(29 t_3^2
\\&+2 \big(w t_1-t_4\big) \big(5 w t_1 +4 w t_2+13 t_4\big)\big) t_0-\big(w t_1-t_4\big) \big(6 t_3^2+5 \big(w t_1-t_4\big) t_4\big)\big) R_1
\\&+32 R_2^5 t_0 \big(w t_1-t_4\big) \big(7 t_0+2 w t_1 -2 t_4\big)+\sqrt{R_1} R_2^{9/2} t_3 \big(128 t_0^2+36 \big(t_4-w t_1\big) t_0
\\&-15 \big(t_4-w t_1\big){}^2\big),
\end{align*}
\endgroup
\begin{align*}
f_{2}^{d} = 96 R_1^2 R_2^2 \big(2 \sqrt{R_1} \sqrt{R_2} t_3+R_2 \big(t_4-w t_1\big)+R_1 \big(w t_2+t_4\big)\big){}^3.
\end{align*}
\begin{align*}
f_{3} = \frac{f_{3}^{n}}{f_{3}^{d}}, \quad \text{where}
\end{align*}
\begingroup
\allowdisplaybreaks
\begin{align*}
f_{3}^{n} = &R_2^{5/2} R_1^{3/2} t_3 \big(6 t_1 w+t_2 w+8 t_0+3 t_4\big)+R_2^{3/2} R_1^{5/2} t_3 \big(-\big(t_1+6 t_2\big) w+8 t_0+3 t_4\big) 
\\&-\sqrt{R_2} R_1^{7/2} t_3 \big(3 \big(t_2 w+t_4\big)+8 t_0\big)+8 R_1^4 t_0 \big(t_2 w+2 t_0+t_4\big)+2 R_2 R_1^3 \big(t_3^2 
\\&-4 \big(2 t_0+t_4\big) \big(t_2 w+4 t_0+t_4\big)\big)+4 R_2^2 R_1^2 \big(t_0 \big(-2 t_1 w+2 t_2 w+20 t_4\big)+24 t_0^2
\\&-t_3^2+4 t_4^2\big)+2 R_2^3 R_1 \big(t_3^2 -4 \big(2 t_0+t_4\big) \big(-t_1 w+4 t_0+t_4\big)\big)+8 R_2^4 t_0 \big(-t_1 w
\\&+2 t_0+t_4\big)+\sqrt{R_1} R_2^{7/2} t_3 \big(3 t_1 w -8 t_0-3 t_4\big),
\end{align*}
\endgroup
\begin{align*}
f_{3}^{d} = 64 R_1^2 R_2^2 \big(R_2 \big(t_4-t_1 w\big)+R_1 \big(t_2 w+t_4\big)+2 \sqrt{R_1} \sqrt{R_2} t_3\big){}^2.
\end{align*}
\begin{align*}
f_{4} = \frac{f_{4}^{n}}{f_{4}^{d}}, \quad \text{where}
\end{align*}
\begingroup
\allowdisplaybreaks
\begin{align*}
f_{4}^{n} = &R_2^{5/2} R_1^{3/2} t_3 \big(2 t_1 w+t_2 w-4 t_0-5 t_4\big)+R_2^{3/2} R_1^{5/2} t_3 \big(t_1 w+2 t_2 w+4 t_0+5 t_4\big) 
\\&+\sqrt{R_2} R_1^{7/2} t_3 \big(5 \big(t_2 w+t_4\big)-12 t_0\big)-16 R_1^4 t_0 \big(t_2 w+t_4\big)+2 R_2 R_1^3 \big(4 \big(2 t_0+t_4\big) 
\\&\big(t_2 w+t_4\big) +t_3^2\big)-2 R_2^3 R_1 \big(4 \big(2 t_0+t_4\big) \big(t_4-t_1 w\big)+t_3^2\big)+16 R_2^4 t_0 \big(t_4-t_1 w\big) 
\\&+\sqrt{R_1} R_2^{7/2} t_3 \big(5 t_1 w+12 t_0-5 t_4\big),
\end{align*}
\endgroup
\begin{align*}
f_{4}^{d} = 32 R_1^2 R_2^2 \big(R_2 \big(t_4-t_1 w\big)+R_1 \big(t_2 w+t_4\big)+2 \sqrt{R_1} \sqrt{R_2} t_3\big){}^2.
\end{align*}
\begin{align*}
f_{5} = \frac{f_{5}^{n}}{f_{5}^{d}}, \quad \text{where}
\end{align*}
\begingroup
\allowdisplaybreaks
\begin{align*}
f_{5}^{n} = &2 R_2^{7/2} t_3 \big(128 t_0^2+4 \big(25 w t_1+33 w t_2+24 t_4\big) t_0-36 t_3^2+7 \big(w t_1-t_4\big) \big(3 w \big(t_1+t_2\big)
\\&-4 t_4\big)\big)  R_1^{3/2}+R_2^{5/2} t_3 \big(3 \big(3 t_1^2-28 t_2 t_1+3 t_2^2\big) w^2+126 \big(t_2-t_1\big) t_4 w+1152 t_0^2
\\&+240 t_3^2+86 t_4^2 +8 t_0 \big(85 w \big(t_1-t_2\big)-202 t_4\big)\big) R_1^{5/2}+2 R_2^{3/2} t_3 \big(128 t_0^2-4 \big(33 w t_1
\\&+25 w t_2-24 t_4\big) t_0 -36 t_3^2+7 \big(w t_2+t_4\big) \big(3 w \big(t_1+t_2\big)+4 t_4\big)\big) R_1^{7/2}-\sqrt{R_2} t_3 
\\&\big(832 t_0^2-616 \big(w t_2+t_4\big) t_0 +51 \big(w t_2+t_4\big){}^2\big) R_1^{9/2}+32 t_0 \big(w t_2+t_4\big) \big(7 \big(w t_2+t_4\big)
\\&-10 t_0\big) R_1^5-2 R_2 \big(\big(45 \big(w t_2+t_4\big) -312 t_0\big) t_3^2+16 \big(2 \big(-5 w t_1+4 w t_2+9 t_4\big) t_0^2
\\&+\big(7 w t_1+2 w t_2-t_4\big) \big(w t_2+t_4\big) t_0+t_4 \big(w t_2 +t_4\big){}^2\big)\big) R_1^4+2 R_2^2 \big(3 \big(-104 t_0
\\&+3 w t_1+42 w t_2+31 t_4\big) t_3^2+16 \big(4 \big(2 w t_1+9 w t_2+7 t_4\big) t_0^2 +\big(t_2 \big(7 t_1-5 t_2\big) w^2
\\&+\big(11 t_1-9 t_2\big) t_4 w-8 t_4^2\big) t_0+t_4 \big(w t_1+t_4\big) \big(w t_2+t_4\big)\big)\big) R_1^3 +2 R_2^3 \big(-3 \big(104 t_0
\\&+42 w t_1+3 w t_2-31 t_4\big) t_3^2-16 \big(4 \big(9 w t_1+2 w t_2-7 t_4\big) t_0^2+\big(t_1 \big(5 t_1 -7 t_2\big) w^2
\\&+\big(11 t_2-9 t_1\big) t_4 w+8 t_4^2\big) t_0-\big(w t_1-t_4\big) \big(w t_2-t_4\big) t_4\big)\big) R_1^2+2 R_2^4 \big(t_3^2 \big(312 t_0 
\\&+45 w t_1-45 t_4\big)-16 \big(2 \big(-4 w t_1+5 w t_2+9 t_4\big) t_0^2+\big(w t_1-t_4\big) \big(2 w t_1+7 w t_2
\\&+t_4\big) t_0 +t_4 \big(t_4-w t_1\big){}^2\big)\big) R_1+32 R_2^5 t_0 \big(w t_1-t_4\big) \big(10 t_0+7 w t_1-7 t_4\big)
\\&-\sqrt{R_1} R_2^{9/2} t_3 \big(832 t_0^2 +616 \big(w t_1-t_4\big) t_0+51 \big(t_4-w t_1\big){}^2\big),
\end{align*}
\endgroup
\begin{align*}
f_{5}^{d} = 384 R_1^2 R_2^2 \big(2 \sqrt{R_1} \sqrt{R_2} t_3+R_2 \big(t_4-w t_1\big)+R_1 \big(w t_2+t_4\big)\big){}^3.
\end{align*}
\begin{align*}
f_{6} = \frac{f_{6}^{n}}{f_{6}^{d}}, \quad \text{where}
\end{align*}
\begingroup
\allowdisplaybreaks
\begin{align*}
f_{6}^{n} = &-2 R_2^{7/2} t_3 \big(-448 t_0^2+112 \big(w t_1-3 t_4\big) t_0+4 t_3^2+\big(w t_1-t_4\big) \big(3 w \big(t_1+t_2\big)
\\&+56 t_4\big)\big) R_1^{3/2} +R_2^{5/2} t_3 \big(3 \big(t_1^2+4 t_2 t_1+t_2^2\big) w^2+6 \big(t_1-t_2\big) t_4 w-1344 t_0^2+16 t_3^2
\\&-230 t_4^2+112 t_0 \big(w t_1 -w t_2-10 t_4\big)\big) R_1^{5/2}+2 R_2^{3/2} t_3 \big(448 t_0^2+112 \big(w t_2+3 t_4\big) t_0
\\&-4 t_3^2-\big(3 w \big(t_1+t_2\big)-56 t_4\big)  \big(w t_2+t_4\big)\big) R_1^{7/2}+\sqrt{R_2} t_3 \big(-224 t_0^2-112 \big(w t_2
\\&+t_4\big) t_0+3 \big(w t_2+t_4\big){}^2\big) R_1^{9/2}+8 t_0 \big(w t_2 +t_4\big) \big(2 t_0+w t_2+t_4\big) R_1^5+2 R_2 \big(t_3^2 \big(64 t_0
\\&+33 \big(w t_2+t_4\big)\big)-4 \big(2 \big(w t_1+4 w t_2+3 t_4\big) t_0^2 +\big(w t_2+t_4\big) \big(w t_1+2 w t_2+5 t_4\big) t_0
\\&+t_4 \big(w t_2+t_4\big){}^2\big)\big) R_1^4+2 R_2^2 \big(4 \big(4 \big(2 w t_1+3 w t_2+t_4\big) t_0^2 +\big(t_2 \big(t_1+t_2\big) w^2+\big(5 t_1
\\&+9 t_2\big) t_4 w+4 t_4^2\big) t_0+t_4 \big(w t_1+t_4\big) \big(w t_2+t_4\big)\big)-t_3^2 \big(64 t_0+w t_1 -30 w t_2
\\&+33 t_4\big)\big) R_1^3-2 R_2^3 \big(t_3^2 \big(64 t_0+30 w t_1-w t_2+33 t_4\big)-4 \big(4 \big(-3 w t_1-2 w t_2
\\&+t_4\big) t_0^2 +\big(t_1 \big(t_1+t_2\big) w^2-\big(9 t_1+5 t_2\big) t_4 w+4 t_4^2\big) t_0+\big(w t_1-t_4\big) \big(w t_2-t_4\big) t_4\big)\big) R_1^2
\\&+2 R_2^4 \big(t_3^2 \big( 64 t_0+33 \big(t_4-w t_1\big)\big)-4 \big(-2 \big(4 w t_1+w t_2-3 t_4\big) t_0^2+\big(w t_1-t_4\big) \big(2 w t_1
\\&+w t_2-5 t_4\big) t_0 +t_4 \big(t_4-w t_1\big){}^2\big)\big) R_1+8 R_2^5 t_0 \big(t_4-w t_1\big) \big(2 t_0-w t_1+t_4\big)
\\&+\sqrt{R_1} R_2^{9/2} t_3 \big(-224 t_0^2 +112 \big(w t_1-t_4\big) t_0+3 \big(t_4-w t_1\big){}^2\big),
\end{align*}
\endgroup
\begin{align*}
f_{6}^{d} = 384 R_1^2 R_2^2 \big(2 \sqrt{R_1} \sqrt{R_2} t_3+R_2 \big(t_4-w t_1\big)+R_1 \big(w t_2+t_4\big)\big){}^3.
\end{align*}

\end{appendices}
\printbibliography

\end{document}